\documentclass[12pt]{article}
\title{Topologizing interpretable groups in $p$-adically closed fields}
\author{Will Johnson}

\usepackage{amsmath, amssymb, amsthm}    	% need for subequations
\usepackage{fullpage} 	% without this, will have wide math-article-paper margins
\usepackage{amscd}
\usepackage{hyperref}
\usepackage[all]{xy}
\usepackage[T1]{fontenc}
\usepackage{lmodern}
\usepackage{centernot}
\usepackage{enumitem}

\DeclareMathOperator*{\ind}{\raise0.2ex\hbox{\ooalign{\hidewidth$\vert$\hidewidth\cr\raise-0.9ex\hbox{$\smile$}}}}

\newcommand{\dimind}{\ind^{\dim}}
\newcommand{\pCF}{p\mathrm{CF}}

\newcommand{\Aut}{\operatorname{Aut}}

\newcommand{\id}{\operatorname{id}}

\newcommand{\acl}{\operatorname{acl}}
\newcommand{\dcl}{\operatorname{dcl}}
\newcommand{\tp}{\operatorname{tp}}

\newcommand{\bd}{\operatorname{bd}}

\newtheorem{theorem}{Theorem}[section] % numbered like the section

\newtheorem{lemma}[theorem]{Lemma}

\newtheorem{corollary}[theorem]{Corollary}
\newtheorem{fact}[theorem]{Fact}

\newtheorem{conjecture}[theorem]{Conjecture}

\newtheorem{proposition}[theorem]{Proposition}
\newtheorem{proposition-eh}[theorem]{Proposition(?)}
\newtheorem*{theorem-star}{Theorem}
\newtheorem*{conjecture-star}{Conjecture}
\newtheorem*{lemma-star}{Lemma}
\newtheorem{claim}[theorem]{Claim}

\theoremstyle{definition}
\newtheorem{definition}[theorem]{Definition}
\newtheorem{example}[theorem]{Example}

\newtheorem{remark}[theorem]{Remark}

\newtheorem*{warning}{Warning}

\theoremstyle{remark}

\newtheorem*{acknowledgment}{Acknowledgments}

\newcommand{\Qq}{\mathbb{Q}}
\newcommand{\eq}{\mathrm{eq}}

\newcommand{\Rr}{\mathbb{R}}

\newcommand{\Mm}{\mathbb{M}}

\newcommand{\Oo}{\mathcal{O}}

\newenvironment{claimproof}[1][\proofname]
               {
                 \proof[#1]
                 
               }
               {
                 \endproof
               }

\begin{document}

\maketitle

\begin{abstract}
  We consider interpretable topological spaces and topological groups
  in a $p$-adically closed field $K$.  We identify a special class of
  ``admissible topologies'' with topological tameness properties like
  generic continuity, similar to the topology on definable subsets of
  $K^n$.  We show every interpretable set has at least one admissible
  topology, and every interpretable group has a unique admissible
  group topology.  We then consider definable compactness (in the
  sense of Fornasiero) on interpretable groups.  We show that an
  interpretable group is definably compact if and only if it has
  finitely satisfiable generics (\textit{fsg}), generalizing an
  earlier result on definable groups.  As a consequence, we see that
  \textit{fsg} is a definable property in definable families of
  interpretable groups, and that any \textit{fsg} interpretable group
  defined over $\Qq_p$ is definably isomorphic to a definable group.
\end{abstract}

\section{Introduction}
%% In this paper, a \emph{$p$-adically closed field} is a field that is
%% elementarily equivalent to $\Qq_p$.
The theory of $p$-adically closed
fields, denoted $\pCF$, has a cell decomposition theorem \cite[\S4,
  Theorem~$1.1'$]{vdDS} analogous to the cell decomposition in
o-minimal theories.  This in turn yields a dimension theory
\cite[\S3]{vdDS} and topological tameness results.  Here are three
representative results from the topological tameness of $\pCF$:
\begin{itemize}
\item If $f : X \to Y$ is a definable function, then $f$ is continuous
  on a dense open subset of $X$ \cite[\S4, Theorem~$1.1'$]{vdDS}.
\item If $X$ is definable and non-empty, then the frontier $\partial
  X$ has dimension strictly less than the dimension of $X$
  \cite[Theorem~3.5]{p-minimal-cells}.
\item If $X$ is definable of dimension $n$, then there is an open
  definable subset $X' \subseteq X$ such that $X'$ is an
  $n$-dimensional definable manifold and $\dim(X \setminus X') < n$
  (see Remark~\ref{definable-to-manifold}).
\end{itemize}
These results in turn allow one to give any definable group $G$ the
structure of a definable manifold in a canonical way \cite{Pillay-G-in-p}.

Unlike a typical o-minimal theory, $\pCF$ does \emph{not} have
elimination of imaginaries.  Consequently, interpretable sets and groups no longer
come with obvious topologies.  This is a shame, as there are some
important interpretable sets in $\pCF$, like the value group and the
set of balls.  Moreover, interpretable groups arise naturally in the
study of definable groups when forming quotient groups.

One can define a general class of \emph{interpretable topological
  spaces}, that is, definable topological spaces in
$p\mathrm{CF}^\eq$.  However, topological tameness results no longer
hold on this general class.  For example, if $X$ and $Y$ are the home
sort $K$ with the standard topology and discrete topology,
respectively, then the identity map $X \to Y$ is not generically
continuous.  We need to exclude things like the discrete topology on the
home sort.

In this paper, we define a special class of \emph{admissible}
interpretable topological spaces (Definition~\ref{adm-def}).  The
class of admissible topologies excludes cases like the discrete
topology on the home sort.  The class of admissible topologies has
many nice properties.  First, there are ``enough'' admissible
topologies:
\begin{enumerate}
	\item Every interpretable set has at least one admissible topology
	(Theorem~\ref{construction-1}).  Every interpretable group has a
	unique admissible group topology (Theorem~\ref{adm-group-thm}).
\item A definable set $D \subseteq K^n$ is admissible, as a
  topological subspace of $K^n$.
\item A definable manifold is admissible
  (Example~\ref{upsilon}).
  \item An interpretable subspace of an admissible topological space is
  admissible (Proposition~\ref{subspaces}).
\item A product or disjoint union of two admissible topological spaces
  is admissible (Proposition~\ref{products-etc}).
  \item Admissibility is preserved under interpretable homeomorphism.
\end{enumerate}
Second, admissible topologies are ``tame'' or nice:
\begin{enumerate}[resume]
\item Admissible topological spaces are Hausdorff.
\item If $X$ is admissible and $p \in X$, then some neighborhood of
  $p$ is interpretably homeomorphic to a definable subset of $K^n$.
\item If $X$ is admissible and $D \subseteq X$ is interpretable, then
  the frontier $\partial D$ has lower dimension than $D$
  (Proposition~\ref{frontier-dimension}).  See Section~\ref{sec-dim}
  for a review of dimension theory on interpretable sets.
\item If $f : X \to Y$ is interpretable, then there is an
  interpretable closed subset $D \subseteq X$ of lower dimension than $X$,
  such that $f$ is continuous on $X \setminus D$ (Proposition~\ref{gen-con}).
\item If $X$ is admissible, then there is an interpretable closed
  subset $D \subseteq X$ of lower dimension than $X$, such that $X
  \setminus D$ is everywhere locally homeomorphic to $K^n$, for $n =
  \dim(X)$ (Corollary~\ref{generically-n-manifold}).
\end{enumerate}
\begin{remark}
  One interesting corollary of the final point is that if $S$ is
  interpretable of dimension $n$, then there is an interpretable
  injection from an $n$-dimensional ball into $S$.  (This can also be
  proved directly.)
\end{remark}

Admissible interpretable topologies were introduced in
\cite[Section~4]{wj-o-minimal} in the esoteric context of o-minimal
theories without elimination of imaginaries.  Luckily, the arguments
of \cite{wj-o-minimal} carry over to the $p$-adic context with almost
no changes.  Nevertheless, we take the opportunity to clean up some of
the proofs and results.  We also slightly strengthen the definition
of ``admissible,'' in order to get cleaner theorems.

We apply the theory of admissibility to interpretable groups in
$\pCF$.  Classically, Pillay constructed a definable
manifold structure on any definable group in $\pCF$
\cite{Pillay-G-in-p}.  Admissibility allows us to run the same
arguments on interpretable groups, yielding the following:
\begin{theorem}[{= Theorem~\ref{adm-group-thm}}]
  \label{group-theorem}
  If $G$ is an interpretable group, then there is a unique admissible
  group topology on $G$.
\end{theorem}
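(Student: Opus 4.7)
The plan is to adapt Pillay's construction from \cite{Pillay-G-in-p}, which equipped each definable group in $\pCF$ with a definable manifold structure, to the interpretable setting using the admissibility machinery above. Both existence and uniqueness rest on a single theme: admissibility gives tameness on a generic open subset of $G$, and translation by group elements homogenizes that tameness across all of $G$.

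For existence, I would begin with \emph{any} admissible topology $\tau_0$ on $G$, provided by Theorem~\ref{construction-1}. By Corollary~\ref{generically-n-manifold}, there is a $\tau_0$-open $V \subseteq G$ with $\dim(G \setminus V) < \dim G = n$ such that every point of $V$ has a $\tau_0$-neighborhood interpretably homeomorphic to an open subset of $K^n$. Fixing a point $g_0 \in V$, I would define a new topology $\tau$ on $G$ by specifying a neighborhood basis at each $g \in G$, namely the left translates $g g_0^{-1} \cdot N$ as $N$ ranges over a $\tau_0$-neighborhood basis of $g_0$ contained in $V$. Verifying that $\tau$ is a topology amounts to checking that these translated bases are compatible on overlaps; for this one applies generic continuity (Proposition~\ref{gen-con}) to the interpretable multiplication map $m : G \times G \to G$ and to its partial inverses. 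Because $\tau$ is translation-invariant by construction, continuity of multiplication and inversion only needs to be checked near the identity, where it reduces again to generic continuity of $m$. Finally, $\tau$ is admissible: a neighborhood of $g_0$ is interpretably homeomorphic to a definable subset of $K^n$ by construction, so every point of $G$ has such a neighborhood after translation, and stability of admissibility under subspaces and homeomorphism (items 4 and 6 in the introduction) transfers the property globally.

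For uniqueness, suppose $\tau_1$ and $\tau_2$ are two admissible group topologies on $G$ and consider the interpretable map $\id : (G,\tau_1) \to (G,\tau_2)$. By Proposition~\ref{gen-con} it is continuous off a $\tau_1$-closed subset $D$ of dimension strictly less than $\dim G$, hence continuous at some point $g^* \in G \setminus D$. Because both $\tau_1$ and $\tau_2$ are group topologies, they are homogeneous under left translation, and continuity of the identity at the single point $g^*$ therefore upgrades to continuity everywhere: any $\tau_2$-neighborhood of an arbitrary $g$ has the form $g (g^*)^{-1} W$ for some $\tau_2$-neighborhood $W$ of $g^*$, and continuity of left translation in each topology lets us pull this back to a $\tau_1$-neighborhood of $g$. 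Hence $\tau_2 \subseteq \tau_1$, and the symmetric argument gives the reverse inclusion.

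The step I expect to be the main obstacle is the gluing in the existence argument: ensuring that left translations by group elements are local $\tau_0$-homeomorphisms on enough of $G$ to make the translated neighborhood bases coherent. A single application of generic continuity to $m$ only controls a co-low-dimensional subset of $G \times G$, and one needs a two-variable version --- via either a Fubini-style dimension argument or repeated shrinking of $V$ by generic translates --- to guarantee that every pair of points in $G$ can be compared through the chart at $g_0$. Once this coherence is in place, the remaining verifications (Hausdorffness, continuity of multiplication and inversion, and admissibility) should follow cleanly from the closure properties of admissibility already established.
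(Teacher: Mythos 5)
Your overall strategy---push a generic chart around $G$ by left translation for existence, and for uniqueness upgrade generic continuity of $\id_G:(G,\tau_1)\to(G,\tau_2)$ to continuity everywhere via translation---is exactly the paper's strategy (Lemma~\ref{uniqueness-0} and Proposition~\ref{construction-2}). The uniqueness half of your argument is complete and matches the paper's essentially verbatim. The existence half, however, has a genuine gap: the ``main obstacle'' you flag at the end is precisely the step the paper has to work hardest for, and it is not filled by either of the fixes you gesture at. To make the translated bases $g g_0^{-1}\cdot N$ coherent you need that for all pairs $a,b$ in the chart domain, the comparison map $x\mapsto b a^{-1} x$ is $\tau_0$-continuous at $a$; a single application of Proposition~\ref{gen-con} to $m$ only gives this for \emph{generic} pairs $(a,b)$, and repeatedly shrinking $V$ does not obviously terminate. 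The paper's device is to introduce the interpretable preorder $a\preceq b$ iff $x\mapsto ba^{-1}x$ is $\tau_0$-continuous at $a$, show by a dimension computation that generic pairs are $\approx$-equivalent, and take $U$ to be the $\tau_0$-interior of the equivalence class of one generic point: this yields a single large $\tau_0$-open $U$ on which \emph{all} pairwise comparisons are continuous. The gluing is then packaged as an OQ equivalence relation on $G\times U$ (with $G$ discrete), which is also what guarantees the new topology is \emph{interpretable} (Fact~\ref{why-open})---a point your sketch never addresses.

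Two further steps are not ``clean consequences of closure properties.'' Hausdorffness of the new topology requires that any two points of $G$ lie in a common translate of $U$, and admissibility requires definable domination, not just local definability: with the atlas $\{gg_0^{-1}\cdot N\}$ indexed by all of $G$ you only get a locally definable space, and $G\times U$ with $G$ discrete does not witness definable domination unless $\dim G=0$ (Proposition~\ref{local-dim}). Both are rescued by Lemma~\ref{genericky-tool}: finitely many left translates of a large subset cover $G$, and any two points lie in a common translate. That lemma is a separate pigeonhole argument with generic elements, not a formal closure property. Finally, joint continuity of multiplication is obtained in the paper by Lemma~\ref{upgrade}, using one generic point of continuity of $m$ on $G\times G$ together with the identity $m(x,y)=\epsilon^{-1}\cdot m(\epsilon\cdot x,\, y\cdot\delta)\cdot\delta^{-1}$; your reduction ``to the identity'' is the right idea but needs this two-sided form, since left invariance alone does not move a point of $G\times G$ to an arbitrary other point.
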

When $G$ is definable, the admissible group topology is Pillay's
definable manifold structure on $G$.
Theorem~\ref{group-theorem}
feels counterintuitive in the case of the value group
$(\Gamma,+)$, which doesn't admit any obvious topology.  In fact, we
get the discrete topology:
\begin{proposition}[{= Remark~\ref{local-dim-remark}}]
  If $G$ is interpretable, the admissible group topology on $G$ is
  discrete if and only if $\dim(G) = 0$.  In particular, the
  admissible group topology on $\Gamma$ is the discrete topology.
\end{proposition}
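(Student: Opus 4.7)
The plan is to reduce both directions to Corollary~\ref{generically-n-manifold}, which states that an admissible space $X$ of dimension $n$ contains an interpretable closed subset $D$ with $\dim(D) < n$ such that $X \setminus D$ is everywhere locally homeomorphic to $K^n$. Apply this with $X = G$ and set $n = \dim(G)$.

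For the forward direction, I would assume the admissible group topology on $G$ is discrete. Then the subspace $G \setminus D$ inherits the discrete topology, so every one of its points is isolated. On the other hand, each such point has a neighborhood homeomorphic to an open subset of $K^n$, and $K^n$ has no isolated points as soon as $n > 0$ (the valuation topology on $K$ is non-discrete). Hence $n = 0$, as desired.

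For the reverse direction, I would assume $\dim(G) = 0$. Then the corollary produces $D \subseteq G$ closed and interpretable with $\dim(D) < 0$, which in the interpretable dimension theory reviewed in Section~\ref{sec-dim} forces $D = \emptyset$. Thus every point of $G = G \setminus D$ has a neighborhood homeomorphic to an open subset of $K^0$, i.e., is isolated, so the group topology is discrete. The ``in particular'' claim about $\Gamma$ then reduces to the observation that $\dim(\Gamma) = 0$, which is standard: the value group is internal to $\Zz$, and any sort internal to $\Zz$ has $p$-adic dimension zero.

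The argument is essentially a one-line application of the generic manifold corollary in each direction; the only thing to watch is the dimension convention that $\dim(D) < 0$ is equivalent to $D = \emptyset$, but this is part of the dimension framework already set up, so no serious obstacle arises.
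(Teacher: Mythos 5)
Your proof is correct, but it takes a different route from the paper's. The paper's Remark~\ref{local-dim-remark} argues via homogeneity: Proposition~\ref{local-dim} gives one point of local dimension $n=\dim(G)$, translation invariance of the group topology propagates this to every point, and local Euclideanity (strong admissibility from Theorem~\ref{adm-group-thm}) then shows $G$ is locally homeomorphic to an open subset of $\Mm^n$ at \emph{every} point, from which the equivalence is immediate. You instead route everything through Corollary~\ref{generically-n-manifold} and never use the group structure at all: for the forward direction one non-isolated point in the large locally-Euclidean locus already contradicts discreteness when $n>0$, and for the reverse direction the exceptional set has dimension $<0$ and is therefore empty. This is legitimate, and in fact your argument proves the statement for arbitrary admissible spaces --- which is exactly how the paper treats the general case in Section~\ref{0-dim} (the proposition that discrete admissible spaces are $0$-dimensional, together with Proposition~\ref{funny-converse}). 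What the paper's group-specific argument buys is the stronger conclusion that the local dimension of $G$ equals $\dim(G)$ at every point, which is reused later (e.g.\ in Proposition~\ref{open-subgroup}). Two small points to tighten: note explicitly that the large locus $G\setminus D$ is non-empty (a large subset of a non-empty set is non-empty, since its complement has strictly smaller dimension), and replace the ``internal to $\Zz$'' justification of $\dim(\Gamma)=0$ with the framework's own computation --- the valuation map $\Mm^\times\to\Gamma$ is surjective with $1$-dimensional fibers, so $\dim(\Gamma)=\dim(\Mm^\times)-1=0$ by Proposition~\ref{fibers}.
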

Thus, nothing very interesting happens on 0-dimensional groups.
Nevertheless, it is convenient to have a canonical topology which
works uniformly across both definable groups and 0-dimensional
interpretable groups.  The admissible group topology is well-behaved
in several ways:
\begin{theorem}
  Let $G$ be an interpretable group with its admissible group topology, and
  let $H$ be an interpretable subgroup.
  \begin{enumerate}
  \item $H$ is always closed (Proposition~\ref{closed-subgroup}(\ref{cs1})).  $H$ is clopen if and only if $\dim(H) = \dim(G)$ (Proposition~\ref{open-subgroup}).
  \item The admissible group topology on $H$ is the subspace topology
    (Proposition~\ref{closed-subgroup}(\ref{cs2})).
  \item If $H$ is normal, then the admissible group topology on $G/H$
    is the quotient topology (Proposition~\ref{quotient-prop}(\ref{qp3})).
  \end{enumerate}
\end{theorem}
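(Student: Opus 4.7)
For part~(1), my plan for closedness of $H$ is to use that $\overline{H}$ is automatically a subgroup of the topological group $G$, so $\overline{H} \setminus H$ is a (possibly empty) union of cosets of $H$, hence of dimension $\dim(H)$ if nonempty. But $\overline{H} \setminus H \subseteq \partial H$, and the frontier $\partial H$ has strictly smaller dimension by Proposition~\ref{frontier-dimension}, forcing $\overline{H} = H$. For the clopen criterion, the forward direction is immediate from dimension. For the converse, I would first establish the general principle that any interpretable subset of an admissible space of full dimension has nonempty interior; this follows from Corollary~\ref{generically-n-manifold} together with $p$-adic cell decomposition on $K^n$. Applied to $H$, this puts some point in the interior, and translation by group elements spreads the interior to every point, so $H$ is open.

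For part~(2), the subspace topology on $H$ induced from $G$ is admissible by stability of admissibility under interpretable subspaces (item~(4) in the introduction) and is a group topology since $H$ is a subgroup of a topological group. By uniqueness of the admissible group topology (Theorem~\ref{adm-group-thm}), this subspace topology must coincide with $H$'s admissible group topology.

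The main obstacle is part~(3). Let $\pi : G \to G/H$ denote the quotient homomorphism, write $\tau$ for the admissible group topology on $G/H$ and $\tau_q$ for the quotient topology. Both are group topologies, and I would establish $\tau = \tau_q$ by showing that $\pi$ is both continuous and open as a map into $(G/H, \tau)$. Continuity follows from generic continuity (Proposition~\ref{gen-con}) together with the fact that a group homomorphism continuous at one point is continuous everywhere. The hard step is openness. Here my plan is a standard topological-group reduction: it suffices to show $\pi(W)$ contains a neighborhood of $e_{G/H}$ for every open neighborhood $W$ of $e_G$. Choosing a symmetric open $W'$ with $W' \cdot W' \subseteq W$, the image $\pi(W')$ is interpretable and, since the fibers of $\pi$ are $H$-cosets of dimension $\dim(H)$, has dimension $\dim(G/H)$. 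By the full-dimension principle from part~(1), $\pi(W')$ contains a nonempty open $V$; choosing any $gH \in V$, the translate $V \cdot (gH)^{-1}$ is an open neighborhood of $e_{G/H}$ contained in
\[
V \cdot V^{-1} \subseteq \pi(W') \cdot \pi(W')^{-1} = \pi(W' \cdot W') \subseteq \pi(W),
\]
as required.
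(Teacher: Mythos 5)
Your arguments for parts (1) and (2) are essentially the paper's: closedness via the frontier/coset dichotomy and Proposition~\ref{frontier-dimension}, the clopen criterion via ``full dimension $\Rightarrow$ nonempty interior'' plus translation, and part (2) via Proposition~\ref{subspaces} and uniqueness. (Two small remarks: the interior principle you want is most directly the boundary clause $\dim\bd(D)<\dim(X)$ of Proposition~\ref{frontier-dimension}, with no need to route through Corollary~\ref{generically-n-manifold} and cell decomposition; and the ``immediate'' forward direction of the clopen criterion does use Remark~\ref{local-dim-remark}, that $\dim_x(G)=\dim(G)$ at every point.)

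For part (3) you take a genuinely different, and correct, route. The paper works ``bottom-up'': it shows the coset relation is an OQ equivalence relation (openness of $G\to G/H$ for the \emph{quotient} topology is trivial there, since a saturation $B\cdot H=\bigcup_{h\in H}B\cdot h$ is a union of opens), deduces via Fact~\ref{why-open} that the quotient topology is interpretable, checks it is a Hausdorff group topology (using closedness of $H$ from part (1)), concludes it is definably dominated by Lemma~\ref{dom-trans}, and then invokes the characterization in Theorem~\ref{adm-group-thm} to identify it with the admissible group topology. You instead work ``top-down'': starting from the already-constructed admissible topology $\tau$ on $G/H$, you show $\pi$ is continuous (Proposition~\ref{hom-cts}) and open onto $(G/H,\tau)$, which forces $\tau$ to equal the quotient topology. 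Your openness step is where the real work moves: it needs the symmetric-neighborhood trick, the fiber-dimension count $\dim(\pi(W'))=\dim(G)-\dim(H)=\dim(G/H)$ from Proposition~\ref{fibers}, and the interior principle again. Both approaches are sound; the paper's avoids any dimension computation for openness (it gets openness for free from translation by elements of $H$), while yours avoids having to verify directly that the quotient topology is interpretable, Hausdorff, and definably dominated. Note also that your route quietly re-proves the second clause of Proposition~\ref{quotient-prop} ($G\to G/H$ is open), which the paper states separately.
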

\begin{theorem}
  Let $f : G \to H$ be an interpretable homomorphism.
  \begin{enumerate}
  \item $f$ is continuous with respect to the admissible group
    topologies on $G$ and $H$ (Proposition~\ref{hom-cts}).
  \item If $f$ is injective, then $f$ is a closed embedding (Corollary~\ref{ce-cor}).
  \item If $f$ is surjective, then $f$ is an open map
    (Corollary~\ref{o-cor}).
  \end{enumerate}
\end{theorem}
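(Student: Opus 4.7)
The plan is to prove the three parts in order, with parts (2) and (3) reducing to part (1) plus the structural results about admissible group topologies already established (closedness of interpretable subgroups, quotient topology on normal quotients, discreteness in dimension zero).

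For part (1), I would combine generic continuity with translation invariance. Applying Proposition~\ref{gen-con} to $f: G \to H$ produces an interpretable closed $D \subseteq G$ with $\dim(D) < \dim(G)$ on whose complement $f$ is continuous. If $\dim(G) = 0$, the admissible group topology on $G$ is discrete (Remark~\ref{local-dim-remark}) and continuity is automatic; otherwise $G \setminus D$ is nonempty, so pick $g_0$ in it, a point where $f$ is continuous. For any $g_1 \in G$, set $a = g_1 g_0^{-1}$. Since Theorem~\ref{group-theorem} guarantees that admissible group topologies are genuine group topologies, the left translations $L_a : G \to G$ and $L_{f(a)} : H \to H$ are homeomorphisms, and the homomorphism identity $f \circ L_a = L_{f(a)} \circ f$ combined with $L_a(g_0) = g_1$ transports continuity of $f$ at $g_0$ to continuity at $g_1$. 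This gives continuity everywhere.

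For part (2), with $f$ injective, $f(G)$ is an interpretable subgroup of $H$, hence closed with subspace topology equal to its admissible group topology by Proposition~\ref{closed-subgroup}. The map $f: G \to f(G)$ is an interpretable group isomorphism; applying part (1) to both $f$ and its interpretable inverse shows it is a homeomorphism, and composition with the closed inclusion $f(G) \hookrightarrow H$ yields a closed embedding. For part (3), with $f$ surjective, factor $f$ as $G \xrightarrow{\pi} G/\ker(f) \xrightarrow{\bar f} H$, where by Proposition~\ref{quotient-prop}(\ref{qp3}) the middle group carries the quotient topology. Then $\pi$ is open by the standard topological-group computation $\pi^{-1}(\pi(U)) = U \cdot \ker(f)$, and $\bar f$ is a surjective closed embedding by part (2), hence a homeomorphism. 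Therefore $f = \bar f \circ \pi$ is open.

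The main obstacle is part (1): generic continuity by itself only yields continuity off a lower-dimensional set, and some genuine input is needed to spread this to every point. Translation invariance is the natural tool, but it crucially depends on having already built admissible group topologies that are actual group topologies (Theorem~\ref{group-theorem}). Once part (1) is in hand, parts (2) and (3) are formal consequences of the previously established behavior of subgroups and quotients.
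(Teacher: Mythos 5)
Your proposal is correct and follows essentially the same route as the paper: part (1) is the generic-continuity-plus-translation argument (the paper proves Proposition~\ref{hom-cts} by referring to the identical argument in Lemma~\ref{uniqueness-0}), and parts (2) and (3) are derived, as in the paper, as formal consequences of Proposition~\ref{closed-subgroup} and Proposition~\ref{quotient-prop} respectively. The only cosmetic difference is your case split on $\dim(G)=0$ in part (1), which is unnecessary since a large subset of a non-empty set is always non-empty, but it does no harm.
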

\subsection{Application to \textit{fsg} groups} \label{intro-fsg}
In future work with Yao \cite{jy-abelian},
Theorem~\ref{group-theorem} will be used to generalize some of the
results of \cite{johnson-yao} to interpretable groups.  In the present
paper, we apply Theorem~\ref{group-theorem} to the study of
interpretable groups with \textit{fsg}.

Recall that an interpretable group has \emph{finitely satisfiable
generics} (\textit{fsg}) if there is a small model $M_0$ and a global
type $p \in S_G(\Mm)$ such that every left translate $g \cdot p$ is
finitely satisfiable in $M_0$.  This notion is due to Hrushovski,
Peterzil, and Pillay \cite{goo}, who show that generic sets behave
well in \textit{fsg} groups.  An interpretable subset $X \subseteq G$
is said to be \emph{left generic} or \emph{right generic} if $G$ can
be covered by finitely many left translates or right translates of
$X$, respectively.
\begin{fact}[{\cite[Proposition~4.2]{goo}}]\label{hpp-fact}
  Suppose $G$ has \textit{fsg}, witnessed by $p$ and $M_0$.
  \begin{enumerate}
  \item A definable set $X \subseteq G$ is left generic iff it is
    right generic.
  \item Non-generic sets form an ideal: if $X \cup Y$ is generic, then
    $X$ is generic or $Y$ is generic.
  \item A definable set $X$ is generic if and only if every left
    translate of $X$ intersects $G(M_0)$.
  \end{enumerate}
\end{fact}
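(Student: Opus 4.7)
The plan is to prove all three claims in a unified way by going through an intermediate notion: call a type $q \in S_G(\Mm)$ a \emph{generic type} if every left translate $g \cdot q$ is finitely satisfiable in $M_0$. The fsg witness $p$ is itself a generic type, and left translation preserves the property. For a definable $X \subseteq G$ I would establish the equivalence of (i) $X$ is left generic; (ii) $X \in q$ for some generic $q$; (iii) every left translate of $X$ meets $G(M_0)$; (iv) $X$ is right generic.

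The implication (i)$\Rightarrow$(ii) exploits completeness of $p$: if $G = g_1 X \cup \cdots \cup g_n X$, then since $G \in p$ some $g_i X \in p$, whence $X \in g_i^{-1} p$, a generic type. (ii)$\Rightarrow$(iii) is immediate, since $gX \in gq$ and $gq$ is finitely satisfiable in $M_0$, so $gX$ meets $G(M_0)$. For (iii)$\Rightarrow$(iv) I would use saturation: rewrite (iii) as $G(\Mm) = \bigcup_{m \in G(M_0)} m X^{-1}$, so the partial type $\{x \notin m X^{-1} : m \in G(M_0)\}$ has no realization in $\Mm$; saturation of $\Mm$ over $M_0$ forces this partial type to be inconsistent, giving a finite subcover $G = m_1 X^{-1} \cup \cdots \cup m_k X^{-1}$, which inverts to express $G$ as a finite union of right translates of $X$. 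Finally, (iv)$\Rightarrow$(i) is the mirror of (i)$\Rightarrow$(iii) performed on the other side, closing the loop.

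From this circle the three claims drop out: claim (1) is (i)$\Leftrightarrow$(iv); claim (3) is (i)$\Leftrightarrow$(iii); claim (2) follows from (ii), because if $X \cup Y$ is generic then $X \cup Y$ lies in some complete generic type $q$, and completeness forces $X \in q$ or $Y \in q$, so one of them is generic. The main obstacle is the symmetry needed in (iv)$\Rightarrow$(i): one must either supply a ``right'' counterpart of the fsg witness $p$ (obtained from $p$ via the inverse map on $G$, which is an interpretable automorphism preserving finite satisfiability in $M_0$) or repeat the saturation argument of (iii)$\Rightarrow$(iv) on right translates. Once this symmetry is arranged, the proof is straightforward and transfers verbatim from HPP's original definable setting to the interpretable $\pCF^{\eq}$ context, since all steps rely only on general properties of types and saturation.
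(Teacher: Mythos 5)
The paper states this as a Fact quoted from Hrushovski--Peterzil--Pillay \cite[Proposition~4.2]{goo} and gives no proof of its own, so there is nothing internal to compare against. Your argument is correct and is essentially the standard proof from that reference: the cycle (left generic) $\Rightarrow$ (lies in a generic type) $\Rightarrow$ (every left translate meets $G(M_0)$) $\Rightarrow$ (right generic) $\Rightarrow$ (left generic), with the last arrow supplied by applying the same chain to the opposite group via the inverse map, and claim (2) read off from completeness of the generic type.
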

The significance of \textit{fsg} is that it corresponds to
``definable compactness'' in several settings.  For example, if $G$ is
a group definable in a nice o-minimal structure, then $G$ has
\textit{fsg} if and only if $G$ is definably compact
\cite[Remark~5.3]{udi-anand}.  In $\pCF$, a definable group $G$ has
\textit{fsg} if and only if it is definably compact
\cite{O-P,johnson-fsg}.  With admissible group topologies in hand, the
same arguments generalize to interpretable groups:
\begin{theorem}[{= Theorem~\ref{fsg-char}}] \label{t1.6}
  An interpretable group $G$ has \textit{fsg} if and only if it is
  definably compact with respect to the admissible group topology.
\end{theorem}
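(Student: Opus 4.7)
The plan is to run the arguments of \cite{O-P} and \cite{johnson-fsg} in the interpretable category, using the admissible group topology (Theorem~\ref{group-theorem}) in place of Pillay's definable manifold structure on definable groups. Almost every step in those proofs uses the definable manifold structure through three properties: Hausdorffness, local interpretable homeomorphism to a definable subset of $K^n$, and the fact that frontiers have strictly smaller dimension. All three hold in the admissible setting by the itemized list in the introduction and by Proposition~\ref{frontier-dimension}.

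For the direction \textit{fsg} $\Rightarrow$ definably compact, I would argue contrapositively. Suppose $G$ is not definably compact. By Fornasiero's definition, there is an interpretable ``curve'' in $G$ (a definable type, or equivalently a definable map from a $1$-dimensional definable set) with no limit point in $G$. Fix a candidate small model $M_0$ and any type $p \in S_G(\Mm)$; using that the admissible topology makes translation a homeomorphism (Proposition~\ref{hom-cts}) and is locally Euclidean via definable charts, the curve supplies a translate $g \cdot X$ of any $M_0$-definable generic neighborhood $X$ of the identity such that $g \cdot X \cap G(M_0) = \emptyset$. Applying Fact~\ref{hpp-fact}(3) yields a contradiction to \textit{fsg}.

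For the harder direction definably compact $\Rightarrow$ \textit{fsg}, I would proceed as in \cite{johnson-fsg}. By Corollary~\ref{generically-n-manifold} there is an interpretable open dense $U \subseteq G$ that is everywhere locally homeomorphic to $K^n$, where $n = \dim G$. Definable compactness of $G$, combined with the local chart structure, lets me cover $G$ by finitely many admissible charts $V_1, \ldots, V_k$, each interpretably homeomorphic to a definable subset of $K^n$ whose closure is definably compact in $K^n$ in the usual sense. I then construct a candidate \textit{fsg} type $p$ on $G$ by pushing forward, along a chart, the canonical generic type of a suitable definably compact subset of $K^n$. Finite satisfiability of every left translate $g \cdot p$ in a fixed small model $M_0$ then follows by reducing, chart by chart, to the known definable case.

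The main obstacle is this last step: verifying the global \textit{fsg} property from the local chart construction. In the definable case one works inside a single ambient $K^n$, whereas here different translates $g \cdot p$ may be concentrated on different charts, and the transition maps between overlapping charts are only continuous (not polynomial). I would handle this by choosing $M_0$ large enough to contain codes for all the finitely many charts and their overlaps, and then using the frontier-dimension bound (Proposition~\ref{frontier-dimension}) together with generic continuity (Proposition~\ref{gen-con}) to shrink each chart to a set where the chart-change maps, restricted as needed, push forward realizations of local generic types to realizations of local generic types in the target chart. Once this compatibility is established, the genericity, fsg, and definable-compactness statements become chart-local, and the definable results of \cite{O-P, johnson-fsg} apply directly.
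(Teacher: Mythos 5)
There is a genuine gap in the hard direction (definably compact $\Rightarrow$ \textit{fsg}), and it sits exactly where you flag "the main obstacle." You propose to build the \textit{fsg} type by pushing forward, along a chart, "the canonical generic type of a suitable definably compact subset of $K^n$," and then to verify finite satisfiability of every translate $g \cdot p$ "chart by chart." But there is no such canonical type, and in the definable case the existence of a global type all of whose translates are finitely satisfiable in a small model is not a chart-local fact: it is proved in \cite{johnson-fsg} by working over $\Qq_p$, using the Haar measure on the compact group $G(\Qq_p)$ to show that generic sets are exactly the positive-measure sets and that every such set meets $G(\Qq_p)$, and then transferring to arbitrary models via definability of both properties in families. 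Finite satisfiability of $g \cdot p$ in $M_0$ for an arbitrary $g$ in the monster is a global condition; arranging that chart-transition maps carry local generics to local generics (via Propositions~\ref{frontier-dimension} and \ref{gen-con}) does not produce it, and "reducing to the known definable case" chart by chart is circular, since the known definable case is itself the global measure-plus-transfer argument. A secondary issue: in the other direction you invoke a "curve with no limit point" characterization of non-(definable compactness), which is not Definition~\ref{fornasiero-definition}; some bridge (in the paper, the $\Gamma$-exhaustions of Lemma~\ref{zhang-yao}) is needed before the translation argument against Fact~\ref{hpp-fact}(3) can run.

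The paper's actual route avoids all of this. For \textit{fsg} $\Rightarrow$ definably compact, Lemma~\ref{zhang-yao}(\ref{zy2}) supplies a $\Gamma$-exhaustion of $G$, after which the argument of \cite[Section~3]{johnson-fsg} applies verbatim. For the converse, the key observation is Proposition~\ref{eliminator}: a definably compact, strongly admissible, $\Qq_p$-interpretable space is already a definable manifold, so a definably compact $\Qq_p$-interpretable group \emph{is} a definable group and \cite[Proposition~5.1]{johnson-fsg} applies with no new work (in particular, no Haar-measurability issues for interpretable sets arise). The general case then follows by the transfer arguments of \cite[Section~6]{johnson-fsg}, using Theorem~\ref{definability-annoy} for definability of definable compactness in families. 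If you want to salvage your approach, you would need to replace the chart-gluing step with either this reduction to $\Qq_p$ or an independent construction of a left-invariant generically stable measure on the interpretable group; as written, the type you construct has no reason to have the \textit{fsg} property.
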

Here, definable compactness is in the sense of Fornasiero \cite{fornasiero};
see Definition~\ref{fornasiero-definition}.
Using this, we obtain some consequences which have nothing to do with
topology:
\begin{theorem}
  \begin{enumerate}
  \item The \textit{fsg} property is definable in families: if
    $\{G_a\}_{a \in X}$ is an interpretable family of interpretable
    groups, and $X_{fsg}$ is the set of $a \in X$ such that $G_a$ has
    \textit{fsg}, then $X_{fsg}$ is interpretable (Corollary~\ref{fsg-def}).
  \item If $G$ is interpretable over $\Qq_p$ and $G$ has \textit{fsg},
    then $G$ is isomorphic to a definable group (Corollary~\ref{fsg-int-qp}).
  \end{enumerate}
\end{theorem}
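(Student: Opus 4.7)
The plan is to derive both parts from Theorem~\ref{t1.6}, exploiting the uniformity of the admissible-topology construction.

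For part~(1), given an interpretable family $\{G_a\}_{a\in X}$, I would first check that the admissible group topologies $\tau_a$ on the fibers assemble into an interpretable family $\{(G_a,\tau_a)\}_{a\in X}$. Uniqueness in Theorem~\ref{adm-group-thm} pins each $\tau_a$ down canonically, so this reduces to verifying that the construction of the admissible group topology goes through uniformly in the defining parameters; I would establish this by inspecting the proof of Theorem~\ref{adm-group-thm}. Once uniformity is in hand, Fornasiero's definable compactness (Definition~\ref{fornasiero-definition}) --- a first-order condition on topological spaces expressed via limit points of definable types/curves --- cuts out an interpretable subset $X_{dc}\subseteq X$. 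By Theorem~\ref{t1.6}, $X_{dc}=X_{fsg}$, and we are done.

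For part~(2), let $G$ be interpretable over $\Qq_p$ with \textit{fsg}. Theorem~\ref{t1.6} gives that $G$ is definably compact in its admissible group topology. Combining the local-homeomorphism property of admissibility (item~7 in the introduction) with compactness, I can cover $G$ by finitely many open sets $U_1,\ldots,U_k$, each interpretably homeomorphic via $\phi_i : U_i\to V_i$ to a definable open subset $V_i\subseteq K^n$, where $n=\dim G$; because $G$ is defined over $\Qq_p$, the whole atlas can be arranged over $\Qq_p$. The transition maps $\phi_j\circ \phi_i^{-1}$ and the multiplication transported into local coordinates are interpretable maps whose graphs live in powers of the home sort and are interpretable over $\Qq_p$; any such graph is in fact definable over $\Qq_p$, because imaginary parameters over a model of $\pCF$ can be unfolded as definable equivalence classes of representatives in the home sort. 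A Hrushovski-style group-chunk argument applied to these definable local data produces a definable group $H$, and tracing through the construction yields an isomorphism with $G$.

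The bookkeeping step for part~(1) --- uniform interpretability of admissibility in families --- is the main obstacle there and should follow from scrutinizing the construction in Theorem~\ref{adm-group-thm}. For part~(2), the substantive difficulty is twofold: first, verifying that interpretable maps between definable subsets of $K^n$ over $\Qq_p$ are definable over $\Qq_p$; second, carrying out the group-chunk construction so that the resulting definable group is genuinely isomorphic (not merely birationally) to $G$, which should leverage definable compactness of $G$ to ensure that the generic partial multiplication extends everywhere.
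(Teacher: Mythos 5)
Your part~(2) has the right core idea but is over-engineered, and your part~(1) has a genuine gap.

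For part~(1), the step you cannot simply assert is that definable compactness cuts out an interpretable subset of the parameter set. Definition~\ref{fornasiero-definition} quantifies over \emph{all} downward-directed interpretable families of nonempty closed subsets, which is not on its face a first-order condition (and your parenthetical gloss via ``limit points of definable types/curves'' is not the definition used in this paper). This definability is precisely the content of Theorem~\ref{definability-annoy}, and the paper's proof of it is the real work behind Corollary~\ref{fsg-def}: one shows both $S$ and its complement are $\vee$-definable, using Lemma~\ref{trickery} (a definably compact strongly admissible space is the continuous interpretable image of a closed bounded definable $D \subseteq \Mm^n$) for one inclusion and the existence of a witnessing bad family for the other. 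Moreover, the paper's argument for families of groups deliberately sidesteps the ``uniform family of admissible topologies'' problem that you identify as your main obstacle: instead of constructing $\{\tau_a\}_{a\in X}$ uniformly, it existentially quantifies over interpretable Hausdorff group topologies admitting a surjective open map from a definable set, and invokes uniqueness (Theorem~\ref{adm-group-thm}) to know that any witness is \emph{the} admissible topology. So your plan both omits the key definability argument and commits you to a uniformity claim the paper never needs to prove.

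For part~(2), your opening moves match the paper (Proposition~\ref{eliminator}): definable compactness plus local definability over $\Qq_p$ gives, via actual compactness of $X(\Qq_p)$ and elementarity, a \emph{finite} atlas, i.e.\ $G$ is a definable manifold. But at that point you are done: a definable manifold is in interpretable bijection with a definable set, and one transports the group operation along that bijection. The Hrushovski-style group-chunk argument you propose is unnecessary (you already have the whole group globally, not just generic data) and introduces exactly the risk you flag yourself, namely that the chunk construction only recovers the group up to generic isomorphism. Drop it and replace it with the direct transport of structure.
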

Theorem~\ref{t1.6} says something more concrete for 0-dimensional
interpretable groups, such as groups interpretable in the value group
$\Gamma$.  To explain, we need a few preliminary remarks.
The structure $\Qq_p^\eq$ eliminates $\exists^\infty$ (even though the
theory $p\mathrm{CF}^\eq$ \emph{does not}).  Consequently, one can
define a class of ``pseudofinite'' interpretable sets, characterized
by the two properties:
\begin{itemize}
\item Over $\Qq_p$, pseudofiniteness agrees with finiteness.
\item Pseudofiniteness is definable in families.
\end{itemize}
See Proposition~\ref{psf-prop} for a precise formulation.
Pseudofiniteness can also be defined explicitly; see
Definition~\ref{psf-def} and Proposition~\ref{other-psf}.  For
example, $S$ is pseudofinite if and only if $S$ is definably compact
with respect to the discrete topology.  With pseudofiniteness in hand,
Theorem~\ref{t1.6} yields the following for 0-dimensional
interpretable groups:
\begin{theorem}[{= Proposition~\ref{p8.8}}]
  Let $G$ be a 0-dimensional interpretable group.  Then $G$ has
  \textit{fsg} if and only if $G$ is pseudofinite.
\end{theorem}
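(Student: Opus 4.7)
The plan is to chain together three facts that have all been stated in the introduction, so that the proof reduces to a short synthesis rather than any new argument. Specifically, I will combine Theorem~\ref{t1.6} (\textit{fsg} $\Leftrightarrow$ definable compactness with respect to the admissible group topology), the remark that the admissible group topology on a $0$-dimensional interpretable group is discrete (Remark~\ref{local-dim-remark}), and the characterization that pseudofiniteness coincides with definable compactness with respect to the discrete topology (Proposition~\ref{other-psf}).

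First I would invoke Theorem~\ref{t1.6} to replace the \textit{fsg} hypothesis by the statement that $G$ is definably compact with respect to its admissible group topology. Next, since $\dim(G) = 0$, the admissible group topology on $G$ is the discrete topology, by Remark~\ref{local-dim-remark}. Substituting this identification, we get that $G$ has \textit{fsg} if and only if $G$ is definably compact in the sense of Fornasiero with respect to the discrete topology. Finally, by the stated characterization of pseudofiniteness (Proposition~\ref{other-psf}), this last condition is precisely that $G$ is pseudofinite, and the equivalence is established.

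There is really no obstacle to speak of; the whole point of having set up admissibility and Theorem~\ref{t1.6} is to make this corollary immediate. The only thing one should double-check is the compatibility of conventions: the definable-compactness notion used in Theorem~\ref{t1.6} is Fornasiero's (Definition~\ref{fornasiero-definition}), and the pseudofiniteness characterization via discrete-topology definable compactness uses the same notion, so no translation is required. Thus the argument is essentially a two-line deduction from the cited results.
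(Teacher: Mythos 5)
Your proof is correct and follows essentially the same route as the paper: apply Theorem~\ref{fsg-char} to convert \textit{fsg} into definable compactness for the admissible group topology, use Remark~\ref{local-dim-remark} (or Proposition~\ref{funny-converse}) to identify that topology with the discrete one in dimension $0$, and conclude via the definition of pseudofiniteness. Citing Proposition~\ref{other-psf} for the last step is harmless, though since $\dim(G)=0$ is assumed, Definition~\ref{psf-def} already applies directly.
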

\subsection{Relation to prior work}
This paper leans heavily on \cite{wj-o-minimal}, \cite{Pillay-G-in-p}, and
\cite{johnson-fsg}.  On some level, this paper is merely the following
three observations:
\begin{enumerate}
\item The theory of ``admissible topologies'' from Sections~3 and 4 of
  \cite{wj-o-minimal} can be transfered from the o-minimal setting to
  $\pCF$.
\item Pillay's construction of definable manifold structures on
  definable groups \cite{Pillay-G-in-p} can then be applied to interpretable
  groups, giving a unique admissible group topology on any
  interpretable group.
\item The proof in \cite{johnson-fsg} that ``\textit{fsg} = definable
  compactness'' then generalizes to interpretable groups.
\end{enumerate}

In the o-minimal context, a similar idea of constructing a topology on
interpretable groups appears in the work of Eleftheriou, Peterzil, and
Ramakrishnan \cite[Theorem~8.7]{interpretable-groups}.  Using their
topology, the authors show that interpretable groups are definable
\cite[Theorem~8.22]{interpretable-groups}, which implies in hindsight
that the topology is Pillay's topology on definable groups.  (This
contrasts with $\pCF$, where there are non-definable interpretable
groups such as the value group and residue field.)

It is known that $\pCF$ eliminates imaginaries after adding the
so-called \emph{geometric sorts} to the language
\cite[Theorem~1.1]{pcf-ei}.  The $n$th geometric sort is a quotient of
$GL_n(K)$ and can be understood as a space of lattices in $K^n$.  It
is probably possible to circumvent much of this paper, especially
Section~\ref{adm-sec-1}, through the explicit description of
imaginaries, as explained in Section~\ref{geometric-ei} below.
However, the advantage of the present approach is that it is much more
likely to generalize to other settings, such as $P$-minimal theories,
where an explicit description of imaginaries is unknown.

\subsection{Outline}
In Section~\ref{sec-dim} we review the dimension theory for
imaginaries in geometric structures following
\cite{gagelman}.\footnote{This is necessary because the use of
\th-independence and $U^{\text{\th}}$-rank in \cite{wj-o-minimal} no
longer works in the non-rosy theory $\pCF$.  Note that the use of
\th-independence and $U^{\text{\th}}$-rank in \cite{wj-o-minimal} was
overkill; the dimension theory on imaginaries in geometric structures
from \cite{gagelman} would have sufficed.  In fact, in o-minimal
theories, the dimension on imaginaries agrees with
$U^{\text{\th}}$-rank.} In Section~\ref{random-review} we review the
notion of definable and interpretable topological spaces and
Fornasiero's definition of definable compactness \cite{fornasiero}.
In Section~\ref{adm-sec-1} we develop the theory of admissible
topologies in $\pCF$, showing that they satisfy topological tameness
properties akin to definable sets (\S\ref{ss-tame}), that there are
enough of them (\S\ref{ss-construct}), and that natural operations on
topological spaces preserve admissibility (\S\ref{closure-props}).  In
Section~\ref{adm-sec-2} we turn to admissible group topologies,
showing that each interpretable group has a unique admissible group
topology, and checking the topological properties of homomorphisms.
In Section~\ref{def-com-sec} we make a few remarks about definable
compactness in admissible interpretable topological spaces.  In
Section~\ref{fsg-sec} we apply this to the study of \textit{fsg}
interpretable groups, and in Section~\ref{0-dim} we analyze what
happens for 0-dimensional groups.  Finally, in Section~\ref{s-fd} we
consider future research directions, including possible
generalizations and open problems.

\subsection{Conventions}
\emph{Open maps are assumed to be continuous}.  If $X$ is a subset of
a topological space, then $\partial X$ denotes the frontier of $X$ and
$\bd(X)$ denotes the boundary.  If $E$ is an equivalence relation on a
set $X$ and $X'$ is a subset, then $E \restriction X'$ denotes the
restriction of $E$ to $X'$.  If $E$ is an equivalence relation on a
topological space $X$, then $X/E$ denotes the quotient topological
space.  When $X' \subseteq X$, we sometimes abbreviate $X'/(E
\restriction X')$ as $X'/E$.

Following \cite{panorama}, we define a \emph{$p$-adically closed
field} to be a field elementarily equivalent to $\Qq_p$, and we denote
the theory of $p$-adically closed fields by $\pCF$.  The term
``$p$-adically closed field'' is often used in a more general sense to
refer to any field elementarily equivalent to a finite extension of
$\Qq_p$.  For simplicity, we will not consider this more general
context.  However, \emph{all the results in this paper generalize to
$p$-adically closed fields in the broad sense}, replacing $\Qq_p$ with
its finite extensions in certain places.

Symbols like $x,y,z,a,b,c,\ldots$ can denote singletons or tuples.
Tuples are finite by default.  Letters $A,B,C,\ldots$ are usually
reserved for small sets of parameters, and letters $M, N$ are usually
reserved for small models.  We denote the monster model by $\Mm$.  We
maintain the distinction between definable and interpretable sets, as
well as the distinction between reals (in $\Mm$) and imaginaries (in
$\Mm^\eq$).  ``Definable'' means ``definable with parameters,'' and
``0-definable'' means ``definable without parameters.''  If $D$ is
definable, then $\ulcorner D \urcorner$ denotes ``the'' code of $D$,
which is well-defined up to interdefinability, but is usually
imaginary.  We write the $\acl$-dimension in geometric theories as
$\dim(a/B)$ for complete types and $\dim(X)$ for definable sets.

\section{Dimension theory in geometric structures} \label{sec-dim}
Let $\Mm$ be a monster model of a complete one-sorted theory $T$.  Recall the
following definitions from \cite{geostructs, gagelman}.
\begin{definition}
  $T$ is a \emph{pregeometric theory} if $\acl(-)$ satisfies the
  Steinitz exchange property.  $T$ is a \emph{geometric theory} if it
  is pregeometric and $\exists^\infty$ is eliminated.
\end{definition}
There is a well-known dimension theory on pregeometric structures.
This dimension theory assigns a dimension $\dim(a/B)$ to each complete
type $\tp(a/B)$ and a dimension $\dim(X)$ to each definable set $X$.
By work of Gagelman \cite{gagelman}, the dimension theory extends to
$T^{\eq}$.  We review this theory below.

\textbf{For the rest of this section, we assume $T$ is pregeometric.}

\begin{definition} \label{dim-def}
  Let $A \subseteq \Mm$ be a small set of parameters and $b =
  (b_1,\ldots,b_n)$ be a tuple in $\Mm$.  The tuple $b$ is
  \emph{$\acl$-independent (over $A$)} if $b_i \notin \acl(A \cup
  \{b_j : j \ne i\})$ for $1 \le i \le n$.  The \emph{dimension} of
  $b$ over $A$, written $\dim(b/A)$, is the length of a maximal
  subtuple $c$ of $b$ such that $c$ is $\acl$-independent over $A$.
  This is independent of the choice of $c$, assuming the exchange
  property.
\end{definition}
The following properties of dimension are well-known:
\begin{enumerate}
\item Automorphism invariance: if $\sigma \in \Aut(\Mm)$, then
  $\dim(\sigma(a)/\sigma(B)) = \dim(a/B)$.
\item Extension: given $a$ and $B \subseteq C$, there is $a' \equiv_B
  a$ with $\dim(a'/C) = \dim(a'/B) = \dim(a/B)$.
\item Additivity: $\dim(a,b/C) = \dim(a/Cb) + \dim(b/C)$.
\item Base monotonicity: if $B \subseteq C$, then $\dim(a/B) \ge \dim(a/C)$.
\item Finite character: Given $a, B$ there is a finite subset $B'
  \subseteq B$ witih $\dim(a/B') = \dim(a/B)$.
\item Anti-reflexivity: $\dim(a/B) = 0 \iff a \in \acl(B)$.
\end{enumerate}
The exchange property is perserved when imaginaries are named as
parameters:
\begin{fact}[{\cite[Lemma 3.1]{gagelman}}] \label{gagelman-exchange}
  Suppose $A \subseteq \Mm^{\eq}$ is small and $b, c \in \Mm$.  Then
  \begin{equation}
    b \in \acl(Ac) \setminus \acl(A) \implies c \in \acl(Ab).
  \end{equation}
\end{fact}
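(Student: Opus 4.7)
The plan is to reduce to the case of real parameters, where exchange holds by hypothesis. Since each element of $A$ has a real code (by elimination of imaginaries in $T^{\eq}$), I can fix a real tuple $A_0^\ast$ with $A \subseteq \dcl^{\eq}(A_0^\ast)$. Naively replacing $A$ by $A_0^\ast$ would drop the hypothesis $b \notin \acl(A)$, since $A_0^\ast$ may carry extra information. Instead, I use extension to pick $A_0$ realizing $\tp(A_0^\ast / A)$ but chosen generically over $bc$, so that $\dim(A_0/Abc) = \dim(A_0/A)$. Because $A_0 \equiv_A A_0^\ast$ and ``$A \subseteq \dcl^{\eq}(x)$'' is $A$-definable, the inclusion $A \subseteq \dcl^{\eq}(A_0)$ is preserved.

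The point of this generic choice is to rule out the pathology $b \in \acl(A_0) \setminus \acl(A)$. By base monotonicity, $\dim(A_0/Ab) = \dim(A_0/A)$, and additivity gives
\[
\dim(A_0/A) + \dim(b/A) \;=\; \dim(A_0 b/A) \;=\; \dim(b/AA_0) + \dim(A_0/A),
\]
so $\dim(b/AA_0) = \dim(b/A) \geq 1$; since $A \subseteq \dcl^{\eq}(A_0)$ this says $b \notin \acl(A_0)$. Now $A_0, b, c$ are all real tuples, and $b \in \acl(Ac) \subseteq \acl(A_0 c)$, so the exchange property for real parameters yields $c \in \acl(A_0 b)$.

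To descend from $A_0$ back to $A$, run the same additivity argument once more. We have $\dim(c/AA_0 b) = 0$ from the previous step, and $\dim(A_0/Abc) = \dim(A_0/A)$ from the generic choice. Combining,
\[
\dim(A_0/A) + \dim(c/Ab) \;=\; \dim(A_0 c/Ab) \;=\; \dim(c/AA_0 b) + \dim(A_0/Ab) \;=\; \dim(A_0/A),
\]
which forces $\dim(c/Ab) = 0$, i.e., $c \in \acl(Ab)$. The main technical subtlety is making sense of $\dim(\cdot / A)$ for imaginary $A$ in a way that satisfies the extension and additivity axioms without presupposing exchange; this is the bootstrap that Gagelman supplies, after which the computations above are routine bookkeeping with the six listed properties of $\dim$.
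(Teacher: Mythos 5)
The paper offers no proof of this Fact---it is quoted directly from Gagelman---so your argument can only be judged on its own merits, and there it has a fatal circularity. Every dimension you manipulate ($\dim(A_0/Abc)$, $\dim(b/A)$, $\dim(b/AA_0)$, $\dim(c/Ab)$, \dots) is the dimension of a real tuple over a base containing the \emph{imaginary} set $A$. But the well-definedness of $\dim(-/A)$ for imaginary $A$, together with the extension, additivity, base-monotonicity and anti-reflexivity properties you invoke, is precisely what Fact~\ref{gagelman-exchange} is needed to establish: the sentence in the paper immediately after the Fact reads ``Therefore Definition~\ref{dim-def} can be generalized to define $\dim(b/A)$ for real $b \in \Mm^n$ and imaginary $A$.'' Indeed, well-definedness of the maximal-independent-subtuple count over base $A$ \emph{is} the exchange property for the closure operator $X \mapsto \acl(AX) \cap \Mm$, i.e., instances of the very statement to be proved. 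Your closing sentence concedes that one must first make sense of $\dim(\cdot/A)$ for imaginary $A$ ``without presupposing exchange'' and defers this to ``the bootstrap that Gagelman supplies''---but that bootstrap \emph{is} Lemma~3.1. You have outsourced the entire content of the lemma to a black box and verified only the bookkeeping that comes after it. (The bookkeeping itself is internally consistent, and the real-exchange step in the middle is fine.)

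A non-circular argument along your general lines does exist, but the genericity and descent steps must be carried out with compactness and automorphism-invariance over the imaginary base rather than with the dimension calculus. For example: if every real code $A_0 \equiv_A A_0^\ast$ satisfied $b \in \acl(A_0)$, compactness would produce finitely many algebraic formulas $\theta_i(x,\bar y)$ covering $\tp(A_0^\ast/A)$, and $\bigcap_{\bar y \models \tp(A_0^\ast/A)} \bigcup_i \theta_i(\Mm,\bar y)$ would be a finite $\Aut(\Mm/A)$-invariant set containing $b$, contradicting $b \notin \acl(A)$. So some code $A_0$ has $b \notin \acl(A_0)$; since $b \notin \acl(\bar y)$ and $b \in \acl(\bar y c)$ are determined by $\tp(\bar y/bc)$ and by $A \subseteq \dcl^{\eq}(\bar y)$, real exchange then gives $c \in \acl(A_0' b)$ for \emph{every} $A_0' \equiv_{Ab} A_0$, and a second compactness argument of the same shape yields a finite $\Aut(\Mm/Ab)$-invariant set containing $c$, whence $c \in \acl^{\eq}(Ab) \cap \Mm = \acl(Ab)$. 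That replacement of ``choose $A_0$ with $\dim(A_0/Abc)=\dim(A_0/A)$'' by an honest invariance argument is the missing content of your proof.
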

Therefore Definition~\ref{dim-def} can be generalized to define
$\dim(b/A)$ for real $b \in \Mm^n$ and imaginary $A \subseteq
\Mm^\eq$.  The six properties listed above continue to hold.  Finally,
we consider the case where $b$ is imaginary:
\begin{definition} \label{dim-def-2}
  Let $b$ be a tuple of imaginaries and $A$ be a set of
  imaginaries.  Let $c$ be a real tuple such that $b \in
  \acl^\eq(Ac)$.  Define
  \begin{equation*}
    \dim(b/A) := \dim(c/A) - \dim(c/Ab).
  \end{equation*}
\end{definition}
\begin{fact} \phantomsection \label{dim-basics}
  \begin{enumerate}
  \item In Definition~\ref{dim-def-2}, $\dim(b/A)$ is well-defined,
    independent of the choice of $c$.
  \item When $b$ is a real tuple, Definition~\ref{dim-def-2} agrees
    with Definition~\ref{dim-def}.
  \item $\dim(-/-)$ satisfies automorphism invariance, extension,
    additivity, base monotonicity, and finite character.
  \item $\dim(-/-)$ satisfies half of anti-reflexivity:
    \begin{equation*}
      b \in \acl^\eq(A) \implies \dim(b/A) = 0.
    \end{equation*}
  \end{enumerate}
\end{fact}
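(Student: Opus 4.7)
The plan is to reduce each clause to the corresponding property of the real $\acl$-dimension (Definition~\ref{dim-def}) extended to imaginary bases via Fact~\ref{gagelman-exchange}. The workhorse observation is: if $b \in \acl^\eq(E)$, then $\acl^\eq(Eb) = \acl^\eq(E)$, so $\dim(d/Eb) = \dim(d/E)$ for every real tuple $d$, since the real dimension over an imaginary base depends only on the $\acl^\eq$-closure of that base (an immediate consequence of Gagelman exchange).

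For well-definedness (1), given two witnesses $c_1, c_2$ with $b \in \acl^\eq(Ac_i)$, I would work with the concatenation $c = c_1 c_2$, which also satisfies $b \in \acl^\eq(Ac)$, and use additivity of the real dimension together with the workhorse: $\dim(c/A) - \dim(c/Ab) = [\dim(c_1/A) + \dim(c_2/Ac_1)] - [\dim(c_1/Ab) + \dim(c_2/Abc_1)] = \dim(c_1/A) - \dim(c_1/Ab)$, the last equality holding because $\dim(c_2/Abc_1) = \dim(c_2/Ac_1)$ by the workhorse applied to $b \in \acl^\eq(Ac_1)$. By symmetry the same expression equals $\dim(c_2/A) - \dim(c_2/Ab)$. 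Claim (2) is immediate on taking $c = b$ and invoking anti-reflexivity. Claim (4) follows by taking $c$ to be the empty tuple: if $b \in \acl^\eq(A)$, then $b \in \acl^\eq(A\emptyset)$, so $\dim(b/A) = \dim(\emptyset/A) - \dim(\emptyset/Ab) = 0-0 = 0$.

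For (3), automorphism invariance and finite character are transparent from the definition. Base monotonicity $\dim(b/A) \ge \dim(b/A')$ (for $A \subseteq A'$) is slightly subtle because the naive computation with a fixed witness yields inequalities on each term that do not straightforwardly combine; instead, I would use extension for the real dimension to replace $c$ with $c' \equiv_{Ab} c$ satisfying $\dim(c'/A'b) = \dim(c/Ab)$, and then bound $\dim(c'/A') \le \dim(c'/A) = \dim(c/A)$ to conclude $\dim(b/A') \le \dim(b/A)$. Extension for imaginary dimension follows in the same spirit: choose $c''$ extending $\tp(c/A)$ to $C$ without dropping dimension, let $b'c'' \equiv_A bc$, and deduce $\dim(b'/C) = \dim(b/A)$ by combining the freshly-established base monotonicity with the real-dimension identities. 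For additivity $\dim(a,b/C) = \dim(a/Cb) + \dim(b/C)$, I would take witnesses $c_1, c_2$ with $a \in \acl^\eq(Cbc_1)$ and $b \in \acl^\eq(Cc_2)$, use $c_1c_2$ as a joint witness for $\dim((a,b)/C)$, and expand both sides via repeated additivity of the real dimension, twice commuting $b$ across the base via the workhorse (since $b \in \acl^\eq(Cc_2)$).

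The main obstacle is the careful bookkeeping in additivity and base monotonicity, where the witness tuple must be chosen or modified so that the workhorse lemma applies at the right spot; every step beyond that is a routine reduction to the already-established real case.
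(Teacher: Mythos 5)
Your proposal is correct and, on the one clause the paper actually proves rather than citing to Gagelman---base monotonicity---it takes the same route: use the extension property to move the real witness so that the two subtracted terms $\dim(c/Ab)$ and $\dim(c'/A'b)$ coincide, then apply base monotonicity for real tuples to the remaining terms. The other clauses are deferred by the paper to Gagelman's Lemma~3.3 and Proposition~3.4, and your concatenation argument for well-definedness, the telescoping computation for additivity, and the use of the freshly-established base monotonicity inside the proof of extension all match the standard arguments and are sound.
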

Fact~\ref{dim-basics} is proved in \cite[Lemma~3.3, Proposition~3.4]{gagelman}.  Gagelman
omits the proof of Base Monotonicity, which is mildly subtle, so we
review the proof for completeness:
\begin{proof}[Proof (of base monotonicity)]
  Suppose $a \in \Mm^\eq$ and $B \subseteq C \in \Mm^\eq$.  Take a
  real tuple $d \in \Mm^n$ such that $a \in \acl^\eq(Bd)$.  By the
  extension property, we may move $d$ by an automorphism and arrange
  $\dim(d/Ba) = \dim(d/Ca)$.  Then
  \begin{equation*}
    \dim(a/B) = \dim(d/B) - \dim(d/Ba) \ge \dim(d/C) - \dim(d/Ca) = \dim(a/C)
  \end{equation*}
  by base monotonicity for real tuples.
\end{proof}
\begin{definition} \label{set-def}
  Let $A$ be a small subset of $\Mm^\eq$ and $X$ be an
  $A$-interpretable set.  Then $\dim(X) := \max_{c \in X}
  \dim(c/A)$.  When $X = \varnothing$, we define $\dim(X) =
  -\infty$.
\end{definition}
\begin{fact}[{\cite[p.\@ 320--321]{gagelman}}]
  \label{fact2.7}
  In Definition~\ref{set-def}, $\dim(X)$ does not depend on $A$.
\end{fact}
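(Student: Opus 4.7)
The plan is to show that enlarging the parameter set does not change the supremum, and then deduce the statement by comparing both $A$ and $A'$ to $A \cup A'$.

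Fix a small parameter set $A \subseteq \Mm^\eq$ over which $X$ is interpretable, and let $B \supseteq A$ be any larger small set. I would first argue that $\max_{c \in X} \dim(c/A) = \max_{c \in X} \dim(c/B)$. The inequality $\ge$ is immediate from base monotonicity (part of Fact~\ref{dim-basics}): for every $c \in X$, $\dim(c/B) \le \dim(c/A)$, so the suprema satisfy the same inequality. For $\le$, pick any $c \in X$ realizing the maximum $\dim(c/A)$ (the maximum exists because dimensions of elements of a single imaginary sort are bounded by the $\acl$-dimension of any real tuple coding them). By the extension property of $\dim(-/-)$ on $\Mm^\eq$, there is $c' \equiv_A c$ with $\dim(c'/B) = \dim(c'/A) = \dim(c/A)$. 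Since $X$ is $A$-interpretable, $c' \in X$ as well, so $\max_{c \in X} \dim(c/B) \ge \dim(c'/B) = \dim(c/A)$.

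Now suppose $X$ is interpretable over both $A$ and $A'$. Applying the previous paragraph with $B := A \cup A'$, first with $A$ and then with $A'$ in place of the base, gives
\begin{equation*}
\max_{c \in X} \dim(c/A) \;=\; \max_{c \in X} \dim(c/A \cup A') \;=\; \max_{c \in X} \dim(c/A'),
\end{equation*}
which is the desired independence.

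The main subtle point is ensuring that the extension property from Fact~\ref{dim-basics} is available for imaginary tuples $c$, but this is exactly one of the properties listed there (generalized from the real case via Definition~\ref{dim-def-2} and Fact~\ref{gagelman-exchange}). Everything else is a straightforward application of base monotonicity and extension, so I expect no real obstacle beyond carefully invoking the correct clauses of Fact~\ref{dim-basics}.
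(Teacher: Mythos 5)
Your proof is correct and follows exactly the route the paper indicates: the paper states only that Fact~\ref{fact2.7} ``follows formally from base monotonicity and extension,'' and your argument (base monotonicity for $\ge$, extension plus $A$-invariance of $X$ for $\le$, then comparison through $A \cup A'$) is the standard way to carry that out. The one subtle point you flag -- that extension holds for imaginary tuples -- is indeed covered by Fact~\ref{dim-basics}, so there is no gap.
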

Fact~\ref{fact2.7} follows formally from base monotonicity and
extension.  Then Propositions~\ref{dim-of-sets} and \ref{fibers} below
follow formally from Fact~\ref{dim-basics} via the usual proofs.
\begin{proposition} \label{dim-of-sets}
  Let $X, Y$ be interpretable sets.
  \begin{enumerate}
  \item $\dim(X) \ge 0$ iff $X \ne \varnothing$.
  \item \label{dos2} If $X$ is finite, then $\dim(X) \le 0$.
  \item If $X$ is definable, then $\dim(X) \le 0$ if and only if $X$
    is finite.
  \item If $X, Y$ are in the same sort, then $\dim(X \cup Y) =
    \max(\dim(X),\dim(Y))$.
  \item $\dim(X \times Y) = \dim(X) + \dim(Y)$.
  \end{enumerate}
\end{proposition}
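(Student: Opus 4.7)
My plan is, for each clause of the proposition, to fix a small parameter set $A$ over which all sets in sight are interpretable and then to reduce the statement directly to the corresponding item of Fact~\ref{dim-basics} via the definition $\dim(X) = \max_{c \in X} \dim(c/A)$; by Fact~\ref{fact2.7} the choice of $A$ is immaterial. With this set-up, clauses~(1), (2), and (4) require no real work. Clause~(1) combines the convention $\dim(\varnothing) = -\infty$ from Definition~\ref{set-def} with the nonnegativity of $\dim(c/A)$ on tuples, which falls out of Definition~\ref{dim-def-2} together with base monotonicity applied to the defining real witness. For~(2), the elements of a finite $A$-interpretable set have finite $\Aut(\Mm/A)$-orbit and hence lie in $\acl^{\eq}(A)$, so the half of anti-reflexivity in Fact~\ref{dim-basics}(4) yields dimension $0$. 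For~(4), one direction is immediate, and the other follows by splitting an arbitrary $c \in X \cup Y$ into the cases $c \in X$ and $c \in Y$, both $A$-interpretable.

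For clause~(3), I would invoke that $\pCF$ is geometric rather than merely pregeometric: by elimination of $\exists^{\infty}$, any infinite $A$-definable real set $X$ contains some $c \notin \acl(A)$, and such a $c$ has $\dim(c/A) \ge 1$ by the full (bi-implicational) anti-reflexivity available for real tuples.

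The only clause with real content is~(5). For the upper bound, I would apply additivity and then base monotonicity to an arbitrary $(a,b) \in X \times Y$:
\[
  \dim(a,b/A) = \dim(a/Ab) + \dim(b/A) \le \dim(a/A) + \dim(b/A) \le \dim(X) + \dim(Y).
\]
For the lower bound, I would pick $a \in X$ with $\dim(a/A) = \dim(X)$, choose a type $q \in S_Y(A)$ of dimension $\dim(Y)$, and invoke the extension property of Fact~\ref{dim-basics}(3) to realize $q$ by some $b$ with $\dim(b/Aa) = \dim(b/A) = \dim(Y)$; rewriting additivity as $\dim(a,b/A) = \dim(b/Aa) + \dim(a/A)$ then yields $\dim(X \times Y) \ge \dim(X) + \dim(Y)$. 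The main point to get right --- and the only genuine obstacle, such as it is --- is this application of extension to make $b$ behave as if ``independent'' of $a$ in the lower bound for~(5); the minor friction in~(3) between the section-wide pregeometric hypothesis and the need for $\exists^{\infty}$-elimination is harmless, since the ambient theory is in fact geometric.
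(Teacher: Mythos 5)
Your proposal is correct and is exactly the ``usual proof'' that the paper invokes without writing out: the paper's entire justification for Proposition~\ref{dim-of-sets} is the sentence preceding it, stating that it ``follows formally from Fact~\ref{dim-basics},'' and your clause-by-clause reduction (nonnegativity of $\dim(c/A)$ from base monotonicity for the real witness, half anti-reflexivity for~(2), the max-over-elements definition for~(4), and additivity plus base monotonicity for the upper bound and extension for the lower bound in~(5)) is precisely that formal derivation. One small caveat on clause~(3): the proposition sits in the part of Section~\ref{sec-dim} where $T$ is only assumed pregeometric, and the fact you need there --- an infinite $A$-definable real set contains a point outside $\acl(A)$ --- holds in \emph{any} theory by a routine compactness/saturation argument (if every point of $X$ were algebraic over $A$, finitely many algebraic formulas would cover $X$, making it finite), so the detour through elimination of $\exists^\infty$ is unnecessary and, as written, proves the clause only for geometric $T$; this is harmless for the paper's applications to $\pCF$ but slightly narrower than the stated proposition.
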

\begin{proposition} \label{fibers}
  Let $f : X \to Y$ be an interpretable function between two
  interpretable sets.
  \begin{enumerate}
  \item If every fiber has dimension at most $k$, then $\dim(X) \le k + \dim(Y)$.
  \item If every fiber has dimension at least $k$, then $\dim(X) \ge k + \dim(Y)$.
  \item If every fiber has dimension exactly $k$, then $\dim(X) = k +
    \dim(Y)$.
  \item If $f$ is surjective, then $\dim(X) \ge \dim(Y)$.
  \item If $f$ is injective, or more generally if $f$ has finite
    fibers, then $\dim(X) \le \dim(Y)$.
  \item If $f$ is a bijection, then $\dim(X) = \dim(Y)$.
  \end{enumerate}
\end{proposition}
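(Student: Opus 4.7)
The plan is to reduce everything to the additivity identity $\dim(a,b/C) = \dim(a/Cb) + \dim(b/C)$ from Fact~\ref{dim-basics}, combined with the set-level definition $\dim(X) = \max_{c \in X}\dim(c/A)$ from Definition~\ref{set-def}. Fix $A$ a small set of parameters over which $X$, $Y$, and $f$ are all interpretable; by Fact~\ref{fact2.7} the value of $\dim$ on sets is insensitive to the choice of $A$, so this is harmless.

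For part (1), pick a witness $a \in X$ with $\dim(a/A) = \dim(X)$, and let $b = f(a)$. Because $b \in \dcl^{\eq}(Aa)$, additivity gives $\dim(a/A) = \dim(a,b/A) = \dim(a/Ab) + \dim(b/A)$. The fiber $f^{-1}(b)$ is $Ab$-interpretable and has dimension $\le k$ by hypothesis, so $\dim(a/Ab) \le k$; and $\dim(b/A) \le \dim(Y)$ by definition. Hence $\dim(X) \le k + \dim(Y)$. For part (2), dually, pick $b \in Y$ with $\dim(b/A) = \dim(Y)$; by hypothesis the fiber $f^{-1}(b)$ has dimension $\ge k$, so by Definition~\ref{set-def} applied over $Ab$ there exists $a \in f^{-1}(b)$ with $\dim(a/Ab) \ge k$. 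Additivity (using again that $b \in \dcl^{\eq}(Aa)$) yields $\dim(X) \ge \dim(a/A) = \dim(a/Ab) + \dim(b/A) \ge k + \dim(Y)$. Part (3) is the conjunction of (1) and (2).

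Parts (4)--(6) drop out of (1)--(3) by specializing $k = 0$. If $f$ is surjective, every fiber is nonempty, hence has dimension $\ge 0$ by Proposition~\ref{dim-of-sets}(1), so (2) gives $\dim(X) \ge \dim(Y)$. If $f$ has finite fibers, each fiber has dimension $\le 0$ by Proposition~\ref{dim-of-sets}(\ref{dos2}), so (1) gives $\dim(X) \le \dim(Y)$. Part (6) combines these.

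There is no real obstacle here; the only point that needs care is the asymmetry in handling $b = f(a)$. On one side I am using $b \in \dcl^{\eq}(Aa)$ freely to collapse $\dim(a,b/A)$ to $\dim(a/A)$, and on the other side I need the extension-style statement that the maximum defining $\dim(f^{-1}(b))$ is actually attained by some $a$ (rather than merely approached), which is immediate from Definition~\ref{set-def} since the $\max$ there is a genuine maximum over elements of the set. No appeal to the extension axiom is needed, because both $X$ and each fiber are parametrized over bases containing all the relevant data.
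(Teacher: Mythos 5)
Your proof is correct and is exactly the ``usual proof'' the paper invokes: the paper gives no explicit argument for Proposition~\ref{fibers}, stating only that it follows formally from Fact~\ref{dim-basics} (additivity plus the half of anti-reflexivity giving $\dim(b/Aa)=0$ for $b=f(a)$) together with Definition~\ref{set-def}, which is precisely your reduction. The only things worth noting are trivial edge cases ($X$ or $Y$ empty, handled by the convention $\dim(\varnothing)=-\infty$) that neither you nor the paper needs to belabor.
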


Recall that the pregeometric theory $T$ is \emph{geometric} if it
eliminates $\exists^\infty$.
\begin{fact}[{\cite[Proposition 3.7]{gagelman}}] \phantomsection \label{continuity-01}
  \begin{enumerate}
  \item \label{uno} If $a \in \Mm^n$ and $B \subseteq \Mm^\eq$, then $\dim(a/B)$
    is the minimum of $\dim(X)$ as $X$ ranges over $B$-definable sets
    containing $a$.
  \item \label{dos} Suppose $T$ is geometric.  If $a \in \Mm^\eq$ and $B \subseteq
    \Mm^\eq$, then $\dim(a/B)$ is the minimum of $\dim(X)$ as $X$
    ranges over $B$-interpretable sets containing $a$.
  \end{enumerate}
\end{fact}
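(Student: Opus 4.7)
The plan is to separate the two halves of each part. The ``$\ge$'' direction---that every $B$-interpretable set $X \ni a$ satisfies $\dim(X) \ge \dim(a/B)$---is immediate from Definition~\ref{set-def}, which expresses $\dim(X)$ as $\max_{c \in X} \dim(c/B)$. So the substance is to construct witnessing sets of small dimension.

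For part (\ref{uno}), I would set $k = \dim(a/B)$ and reorder $a$ so that $a_1,\ldots,a_k$ is $\acl$-independent over $B$ while each $a_j$ for $j > k$ lies in $\acl(B, a_1,\ldots,a_k)$. For each such $j$ I would pick a $B$-formula $\phi_j(x_1,\ldots,x_k,y)$ for which $\phi_j(a_1,\ldots,a_k,a_j)$ holds, and let $N_j$ be the (finite) number of solutions of $\phi_j(a_1,\ldots,a_k,y)$. Take $X$ to be the $B$-definable set of tuples $(b_1,\ldots,b_n)$ for which each $\phi_j(b_1,\ldots,b_k,b_j)$ holds and $\phi_j(b_1,\ldots,b_k,y)$ has at most $N_j$ solutions; ``at most $N_j$ solutions'' is first-order for each fixed $N_j$. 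Then $a \in X$, and the projection to the first $k$ coordinates has finite fibers, so Proposition~\ref{fibers} gives $\dim(X) \le k$. Hard-coding the specific bound $N_j$ sidesteps any appeal to $\exists^\infty$-elimination, which matters because part (\ref{uno}) does not assume geometricity.

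For part (\ref{dos}), I would pick a real tuple $c$ with $a \in \acl^\eq(Bc)$, set $d := \dim(c/Ba)$, and use additivity plus extension to arrange $\dim(c/B) = k + d$. Applying part (\ref{uno}) to $c$ over $B$ yields a $B$-definable $Y_0 \ni c$ with $\dim(Y_0) = k + d$. Since $a \in \acl^\eq(Bc)$, I would pick a $B$-interpretable set $\Gamma \subseteq Y_0 \times S$ (with $S$ the relevant imaginary sort) containing $(c, a)$ whose projection to $Y_0$ has finite fibers. Now I would invoke the key consequence of geometricity: fiber dimension is definable in families, so for each $r$ the locus $\{y \in S : \dim(\Gamma_y) \ge r\}$ is $B$-interpretable, where $\Gamma_y := \{c' \in Y_0 : (c',y) \in \Gamma\}$. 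Define
\[
  X := \{ y \in S : \dim(\Gamma_y) \ge d \}.
\]
Then $a \in X$ because $\Gamma_a \ni c$ gives $\dim(\Gamma_a) \ge \dim(c/Ba) = d$.

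To bound $\dim(X)$ from above, I would examine $W := \Gamma \cap (Y_0 \times X)$. The projection $W \to X$ has fibers $\Gamma_y$ of dimension $\ge d$, so Proposition~\ref{fibers} yields $\dim(W) \ge \dim(X) + d$; the projection $W \to Y_0$ has finite fibers, giving $\dim(W) \le \dim(Y_0) = k + d$. Combining, $\dim(X) \le k$, which completes the construction. The main obstacle---and the place where the geometric hypothesis does essential work---is the definability of fiber dimension in families; without $\exists^\infty$-elimination there is no reason for $X$ to be interpretable, and the argument in part (\ref{dos}) collapses.
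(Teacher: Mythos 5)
The paper never proves this statement---it is imported as a Fact with a citation to Gagelman \cite[Proposition 3.7]{gagelman}---so there is no internal proof to compare against; what you have written is a correct, essentially self-contained reconstruction, and it is the standard argument. Both lower bounds are indeed immediate from Definition~\ref{set-def}, and your witnessing constructions work: in part (\ref{uno}) the hard-coded bound $\exists^{\le N_j}$ correctly avoids any appeal to $\exists^\infty$-elimination, and in part (\ref{dos}) the two fiber-counting estimates on $W$ via Proposition~\ref{fibers} combine exactly as you say to give $\dim(X) \le k$, while the reverse inequality pins down $\dim(X) = k$. Two points deserve to be made explicit rather than left implicit. First, in part (\ref{dos}) the set $\Gamma$ must have \emph{uniformly} finite fibers over $Y_0$; since the fibers of $\Gamma \to Y_0$ live in the imaginary sort $S$, where $\exists^\infty$-elimination is unavailable even in the geometric case, you should hard-code a bound exactly as in part (\ref{uno}), taking $\Gamma$ to be cut out by $\psi(x,y) \wedge \exists^{\le N} y'\, \psi(x,y')$ for $\psi$ a $B$-formula witnessing $a \in \acl^{\eq}(Bc)$. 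Second, the interpretability of $\{y \in S : \dim(\Gamma_y) \ge d\}$ is Fact~\ref{def-defable} applied to a family indexed by an imaginary sort; one either invokes Proposition~\ref{dim-definable-omega} or pulls back to a genuinely definable family and notes the locus is invariant under the defining equivalence relation. Neither point is a gap, only bookkeeping. Finally, no ``arranging'' via extension is needed to get $\dim(c/B) = k + d$: that identity is forced by Definition~\ref{dim-def-2}.
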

The assumption that $T$ is geometric is necessary in (\ref{dos}), as
shown by the following example.  Suppose $\Mm$ is an equivalence
relation with infinitely many equivalence classes of size $n$, for
each positive integer $n$.  By saturation, there are also infinite
equivalence classes.  The following facts are easy to verify:
\begin{enumerate}
\item For $A \subseteq \Mm$, the algebraic closure $\acl(A)
  \subseteq \Mm$ is the union of $A$ and all finite equivalence
  classes intersecting $A$.
\item $\acl$ satisfies exchange (on $\Mm$).
\item If $C$ is an equivalence class and $b = \ulcorner C \urcorner
  \in \Mm^\eq$ is its code, then
  \begin{equation*}
    \dim(b/\varnothing) = 
    \begin{cases}
      1 & \text{ if $C$ is finite} \\
      0 & \text{ if $C$ is infinite.}
    \end{cases}
  \end{equation*}
\item If $b$ is the code of an infinite equivalence class and $X$ is
  an $\varnothing$-interpretable set containing $b$, then $\dim(X) =
  1 > \dim(b/\varnothing) = 0$.
\end{enumerate}
Another special property of geometric theories is that dimension is
definable in families:
\begin{fact}[{\cite[Fact~2.4]{gagelman}}] \label{def-defable}
  Suppose $T$ is geometric.  Let $\{X_a\}_{a \in Y}$ be a definable
  family of definable sets.  Then for any $k$, the set $\{a \in Y :
  \dim(X_a) = k\}$ is definable.
\end{fact}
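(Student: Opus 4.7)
My plan is to induct on the ambient Cartesian dimension $n$, where $X_a \subseteq \Mm^n$ for every $a \in Y$; we may assume such a uniform $n$ exists since a definable family lives in a single sort power. The base case $n = 1$ will be immediate from Proposition~\ref{dim-of-sets}: for $X_a \subseteq \Mm$, one has $\dim(X_a) \in \{-\infty, 0, 1\}$ according to whether $X_a$ is empty, finite nonempty, or infinite, and each of these is a definable condition on $a$ precisely because $T$, being geometric, eliminates $\exists^\infty$.

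For the inductive step with $n \ge 2$, I would let $\pi \colon \Mm^n \to \Mm^{n-1}$ be the projection to the first $n-1$ coordinates, and write $F_{a,b} := \{c \in \Mm : (b,c) \in X_a\}$ for the fiber over $b \in \pi(X_a)$. By $\exists^\infty$-elimination applied to the definable family $\{F_{a,b}\}_{(a,b)}$, the subsets
\[
Z_a := \{b \in \pi(X_a) : F_{a,b} \text{ is infinite}\}, \qquad W_a := \{b \in \pi(X_a) : F_{a,b} \text{ is finite and nonempty}\}
\]
form definable families of subsets of $\Mm^{n-1}$ parametrized by $a \in Y$. Applying Proposition~\ref{fibers} separately to $X_a \cap \pi^{-1}(Z_a) \to Z_a$ (all fibers of dimension $1$) and $X_a \cap \pi^{-1}(W_a) \to W_a$ (all fibers of dimension $0$), and combining via the additivity of dimension under finite unions, yields
\[
\dim(X_a) = \max\bigl(1 + \dim(Z_a),\ \dim(W_a)\bigr),
\]
with the convention $1 + (-\infty) = -\infty$. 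The inductive hypothesis applied to the families $\{Z_a\}_{a \in Y}$ and $\{W_a\}_{a \in Y}$ in $\Mm^{n-1}$ then shows that $\{a : \dim(Z_a) = j\}$ and $\{a : \dim(W_a) = j\}$ are definable for each $j$, and the displayed formula promotes this to definability of $\{a : \dim(X_a) = k\}$ for each $k$.

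The only step that genuinely uses geometricity rather than merely pregeometricity is the claim that the ``infinite fiber'' locus $Z_a$ is definable uniformly in $a$, and this is exactly $\exists^\infty$-elimination applied to $\{F_{a,b}\}$; everything else is a routine dimension computation via the fiber formula. I expect this to be the only real content of the proof. As a sanity check, the equivalence-relation example discussed after Fact~\ref{continuity-01} already shows that the conclusion fails in general pregeometric theories, so this use of $\exists^\infty$-elimination is unavoidable.
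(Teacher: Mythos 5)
The paper gives no proof of this Fact---it is quoted directly from Gagelman---but your argument is correct and is essentially the standard proof found in that reference: induct on the ambient power $\Mm^n$, use elimination of $\exists^\infty$ to split the projection of $X_a$ into the (uniformly definable) loci of infinite and of finite nonempty fibers, and apply the fiber dimension formula from Proposition~\ref{fibers} together with finite-union additivity. Your identification of the infinite-fiber locus as the only place geometricity (rather than mere pregeometricity) enters is also accurate, as the equivalence-relation example following Fact~\ref{continuity-01} confirms.
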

This holds for interpretable families as well:
\begin{proposition} \label{dim-definable-omega}
  Suppose $T$ is geometric.  Let $\{X_a\}_{a \in Y}$ be an
  interpretable family of interpretable sets.  Then for any $k$, the
  set $\{a \in Y : \dim(X_a) = k\}$ is interpretable.
\end{proposition}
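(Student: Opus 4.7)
The plan is to lift the family from the imaginary sorts containing $Y$ and the $X_a$ to real sorts, then stratify by the dimension of the fibers of the quotient map, so that the already-known Fact~\ref{def-defable} can be applied. First, by combining sorts, I can assume that $Y$ lies in a single imaginary sort $\Mm^m/F$ and that each $X_a$ lies in a single imaginary sort $S = \Mm^n/E$ for $0$-definable equivalence relations $E$, $F$. Let $\rho : \Mm^m \to \Mm^m/F$ and $\pi : \Mm^n \to S$ be the quotient maps and set $\tilde{Y} := \rho^{-1}(Y)$, a definable subset of $\Mm^m$. Pulling the family back through $\pi$ produces a definable family
\[
\tilde{X}_{\tilde{a}} := \pi^{-1}(X_{\rho(\tilde{a})}) \subseteq \Mm^n
\]
indexed by $\tilde{a} \in \tilde{Y}$.

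The hard part will be relating $\dim(\tilde{X}_{\tilde{a}})$ to $\dim(X_{\rho(\tilde{a})})$: the fibers of $\pi$ are the $E$-equivalence classes, which can have varying dimensions as $x$ ranges over $\Mm^n$. I plan to handle this by a stratification. Since $\{[x]_E\}_{x \in \Mm^n}$ is a definable family of definable sets, Fact~\ref{def-defable} shows that
\[
L_j := \{x \in \Mm^n : \dim([x]_E) = j\}
\]
is $0$-definable for each $j \in \{0,1,\ldots,n\}$. Setting $\tilde{X}_{\tilde{a}, j} := \tilde{X}_{\tilde{a}} \cap L_j$ yields a definable family in $\tilde{a}$, and the restriction of $\pi$ to $\tilde{X}_{\tilde{a}, j}$ surjects onto $\pi(\tilde{X}_{\tilde{a}, j}) \subseteq X_{\rho(\tilde{a})}$ with every fiber of dimension exactly $j$, so Proposition~\ref{fibers} gives
\[
\dim\bigl(\pi(\tilde{X}_{\tilde{a}, j})\bigr) = \dim(\tilde{X}_{\tilde{a}, j}) - j
\]
whenever $\tilde{X}_{\tilde{a}, j}$ is non-empty. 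Since $X_{\rho(\tilde{a})} = \bigcup_{j=0}^{n} \pi(\tilde{X}_{\tilde{a}, j})$, Proposition~\ref{dim-of-sets} then yields
\[
\dim(X_{\rho(\tilde{a})}) = \max_{0 \le j \le n} \bigl(\dim(\tilde{X}_{\tilde{a}, j}) - j\bigr),
\]
with empty strata contributing $-\infty$.

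To finish, I will apply Fact~\ref{def-defable} to each definable family $\{\tilde{X}_{\tilde{a}, j}\}_{\tilde{a} \in \tilde{Y}}$: the set $\{\tilde{a} : \dim(\tilde{X}_{\tilde{a}, j}) = \ell\}$ is definable for each $j, \ell$, and a finite Boolean combination of these conditions expresses $\{\tilde{a} \in \tilde{Y} : \dim(X_{\rho(\tilde{a})}) = k\}$ as a definable subset of $\tilde{Y}$. This set depends only on $\rho(\tilde{a})$, so it is $F$-invariant, and its image under $\rho$---which is exactly $\{a \in Y : \dim(X_a) = k\}$---is interpretable, as required.
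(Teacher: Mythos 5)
Your proposal is correct and follows essentially the same route as the paper's proof sketch: stratify the definable preimage by the dimension $j$ of the $E$-classes (using Fact~\ref{def-defable} to see each stratum is definable), compute $\dim(X_a)=\max_j(\dim(\tilde X_{\tilde a,j})-j)$ via Propositions~\ref{dim-of-sets} and \ref{fibers}, and conclude by applying Fact~\ref{def-defable} to each stratum. You merely spell out the uniformity in the parameter $a$ and the descent along $\rho$, which the paper leaves implicit.
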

\begin{proof}[Proof sketch]
  Let $X$ be an interpretable set, defined as $D/E$ for some definable
  set $D \subseteq \Mm^n$ and definable equivalence relation $E$ on
  $D$.  Let $[a]_E$ denote the $E$-equivalence class of $a \in D$.
  Let $D_j = \{a \in D : \dim([a]_E) = j\}$.  Let $X_j$ be the
  quotient $D_j/E$.  Each set $D_j$ is definable by
  Fact~\ref{def-defable}.  Using Propositions~\ref{dim-of-sets} and
  \ref{fibers}, one sees that
  \begin{equation*}
    \dim(X) = \dim\left(\bigcup_{j = 0}^n X_j\right) = \max_{0 \le j \le n} \dim(X_j) =
    \max_{0 \le j \le n} \left(\dim(D_j) - j\right).
  \end{equation*}
  This calculates $\dim(X)$ in a definable way.
\end{proof}

\subsection{Dimensional independence}
Continue to assume $T$ is pregeometric, but not necessarily geometric.
\begin{lemma} \label{acl-alt}
  $\dim(a/B) = \dim(a/\acl^\eq(B))$.
\end{lemma}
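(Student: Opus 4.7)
The plan is to prove both inequalities. The direction $\dim(a/B) \geq \dim(a/\acl^\eq(B))$ is immediate from base monotonicity (Fact~\ref{dim-basics}), since $B \subseteq \acl^\eq(B)$; the content lies in the reverse inequality.

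The first step is to establish the real-tuple version: for any real tuple $c \in \Mm^n$ and any imaginary parameter set $A \subseteq \Mm^\eq$, one has $\dim(c/A) = \dim(c/\acl^\eq(A))$. This should follow directly from Definition~\ref{dim-def} (extended to imaginary parameters via Fact~\ref{gagelman-exchange}), because idempotence of $\acl^\eq$ gives $\acl^\eq(A \cup S) = \acl^\eq(\acl^\eq(A) \cup S)$ for any $S$; hence a subtuple of $c$ is $\acl^\eq$-independent over $A$ exactly when it is $\acl^\eq$-independent over $\acl^\eq(A)$, and the maximal lengths agree.

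Then I would unfold Definition~\ref{dim-def-2} using a single common real witness. Pick $c$ with $a \in \acl^\eq(Bc)$; since $B \subseteq \acl^\eq(B)$, the same $c$ witnesses $a \in \acl^\eq(\acl^\eq(B) \cup c)$, so it computes both imaginary dimensions. The terms $\dim(c/B)$ and $\dim(c/\acl^\eq(B))$ agree by the real-tuple case. For the remaining terms, I would squeeze via base monotonicity applied to $Ba \subseteq \acl^\eq(B) \cup \{a\} \subseteq \acl^\eq(Ba)$, obtaining
\[ \dim(c/Ba) \;\geq\; \dim(c/\acl^\eq(B) \cup \{a\}) \;\geq\; \dim(c/\acl^\eq(Ba)); \]
the outer terms coincide by the real-tuple case (applied with parameter set $Ba$), forcing equality throughout, and subtracting gives the desired equation.

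The only mildly subtle point — and the main place where the machinery does real work — is the real-tuple case, which relies essentially on Gagelman's extension of exchange to imaginary parameters to make $\acl^\eq$-independence of a real tuple over imaginary $A$ well-behaved. Once that is in place, everything else is a direct manipulation of Definition~\ref{dim-def-2} together with base monotonicity.
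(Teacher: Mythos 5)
Your proof is correct, and it is exactly the definitional verification the paper has in mind (the paper's own proof is just ``Clear from the definitions''). The squeeze $Ba \subseteq \acl^\eq(B)\cup\{a\} \subseteq \acl^\eq(Ba)$ combined with the real-tuple case is the right way to handle the second term in Definition~\ref{dim-def-2}, and using a single common witness $c$ is justified by the well-definedness in Fact~\ref{dim-basics}.
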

\begin{proof}
  Clear from the definitions.
\end{proof}
\begin{definition}\label{first-ind-def}
  Suppose $a, b \in \Mm$ and $C \subseteq \Mm^\eq$.  Then $a
  \dimind_C b$ means that $\dim(a,b/C) = \dim(a/C) + \dim(b/C)$.
\end{definition}
\begin{lemma} \label{ind-1}
  Suppose $a, b \in \Mm$ and $C \subseteq \Mm^\eq$.
  \begin{enumerate}
  \item \label{eins} $a \dimind_C b \iff b \dimind_C a$.
  %% \item \label{zwei} $\dim(a,b/C) \le \dim(a/C) + \dim(b/C)$.
  \item \label{drei} $a \dimind_C b$ if and only if $\dim(a/C) = \dim(a/Cb)$.
  \item \label{vier} If $a' \in \acl^\eq(Ca)$ and $b' \in
    \acl^\eq(Cb)$, then $a \dimind_C b \implies a' \dimind_C b'$.
  \end{enumerate}
\end{lemma}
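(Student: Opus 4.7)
The plan is to treat part~(\ref{drei}) first and extract it as a convenient reformulation of $\dimind$, then deduce parts~(\ref{eins}) and~(\ref{vier}) from it. This way the three parts are proved in a dependency-consistent order.

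Part~(\ref{drei}) is a one-line consequence of additivity. Fact~\ref{dim-basics} gives $\dim(a,b/C) = \dim(a/Cb) + \dim(b/C)$, and the defining equation of $\dimind$ reads $\dim(a,b/C) = \dim(a/C) + \dim(b/C)$. Canceling the finite quantity $\dim(b/C)$ yields the equivalent condition $\dim(a/Cb) = \dim(a/C)$. Part~(\ref{eins}) is then essentially a triviality: the tuples $(a,b)$ and $(b,a)$ are $\varnothing$-interdefinable, so $\dim(a,b/C) = \dim(b,a/C)$, and the right-hand side $\dim(a/C) + \dim(b/C)$ of the defining equation is visibly symmetric in $a$ and $b$.

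For part~(\ref{vier}), the key observation is that $b' \in \acl^\eq(Cb)$ implies $\acl^\eq(Cb') \subseteq \acl^\eq(Cb)$. Combining Lemma~\ref{acl-alt} with base monotonicity gives
\begin{equation*}
  \dim(a/C) \;\ge\; \dim(a/Cb') \;\ge\; \dim(a/Cb).
\end{equation*}
Under the hypothesis $a \dimind_C b$, part~(\ref{drei}) promotes the outer terms to equalities, so $\dim(a/Cb') = \dim(a/C)$; invoking~(\ref{drei}) again gives $a \dimind_C b'$. Applying~(\ref{eins}) we get $b' \dimind_C a$, and repeating the same argument with the roles reversed and with $a' \in \acl^\eq(Ca)$ in place of $b'$ yields $b' \dimind_C a'$; one more application of~(\ref{eins}) concludes.

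The only subtlety worth flagging is that the tuples $a', b'$ in~(\ref{vier}) may be imaginary even when $a, b$ are real, so the definition of $\dimind$ and the properties invoked --- additivity, base monotonicity, and Lemma~\ref{acl-alt} --- must hold in the extended setting of Definition~\ref{dim-def-2}. This is exactly the content of Fact~\ref{dim-basics}, so no extra work is required. I do not expect any step to present a genuine obstacle; the whole lemma is bookkeeping on top of additivity and base monotonicity.
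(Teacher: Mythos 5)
Your proof is correct and follows essentially the same route as the paper's: part~(\ref{drei}) by cancelling $\dim(b/C)$ in the additivity identity, part~(\ref{eins}) trivially, and part~(\ref{vier}) by sandwiching $\dim(a/Cb')$ between $\dim(a/C)$ and $\dim(a/Cb)$ via Lemma~\ref{acl-alt} and base monotonicity, then symmetrizing to handle $a'$. The subtlety you flag about $a',b'$ being imaginary is handled in the paper in exactly the same way, by appealing to Fact~\ref{dim-basics}.
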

\begin{proof}
  (\ref{eins}) is trivial.  (\ref{drei}) holds because
  \begin{equation*}
    \dim(a/C) + \dim(b/C) - \dim(a,b/C) = \dim(a/C) - \dim(a/Cb) \ge 0
  \end{equation*}
  by additivity and base monotonicity.  For part (\ref{vier}), we may
  assume $a' = a$ by symmetry.  Then $a \dimind_C b \implies \dim(a/C) =
  \dim(a/Cb)$.  By Lemma~\ref{acl-alt}, this means
  \begin{equation*}
    \dim(a/\acl^\eq(C)) = \dim(a/\acl^\eq(Cb)).
  \end{equation*}
  But $\acl^\eq(C) \subseteq \acl^\eq(Cb') \subseteq \acl^\eq(Cb)$, so
  base monotonicity gives
  \begin{equation*}
    \dim(a/\acl^\eq(C)) = \dim(a/\acl^\eq(Cb')) = \dim(a/\acl^\eq(Cb)).
  \end{equation*}
  By Lemma~\ref{acl-alt} again, $\dim(a/C) = \dim(a/Cb')$, and so $a
  \dimind_C b'$.
\end{proof}
By Lemma~\ref{ind-1}(\ref{vier}), $a \dimind_C b$ depends only on $a$ and $b$ as
sets.
\begin{definition}
  If $A, B, C \subseteq \Mm^\eq$, then $A \dimind_C B$ means that $A_0
  \dimind_C B_0$ for all finite subsets $A_0 \subseteq A$ and $B_0
  \subseteq B$.
\end{definition}
This extends Definition~\ref{first-ind-def} by
Lemma~\ref{ind-1}(\ref{vier}).
\begin{proposition} \phantomsection \label{ind-2}
  \begin{enumerate}
  \item $A \dimind_C B \iff B \dimind_C A$.
  \item \label{silly-mon} If $A' \subseteq \acl^\eq(CA)$ and $B' \subseteq
    \acl^\eq(CB)$, then $A \dimind_C B \implies A' \dimind_C B'$.
  %% \item $A \dimind_C B \iff A \dimind_C BC$.
  %% \item $A \dimind_C B$ holds iff $A_0 \dimind_C B$ for all finite
  %%   $A_0 \subseteq A$.
  \item \label{sami} $a \dimind_C B$ holds iff $\dim(a/C) = \dim(a/CB)$.
  \item \label{otkhi} If $B_1 \subseteq B_2 \subseteq B_3$, then
    \begin{equation*}
      A \dimind_{B_1} B_3 \iff (A \dimind_{B_1} B_2 \text{ and } A \dimind_{B_2} B_3).
    \end{equation*}
  \end{enumerate}
\end{proposition}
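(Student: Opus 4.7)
The plan is to reduce each of the four assertions to the corresponding single-tuple statements in Lemma~\ref{ind-1}, using the definition of $\dimind$ for sets together with finite character of $\dim$ and of $\acl^\eq$. No new ideas beyond additivity, base monotonicity, and finite character should be needed; the work is essentially bookkeeping.

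Parts (1) and (2) are the easiest. Part~(1) is immediate from Lemma~\ref{ind-1}(\ref{eins}) because $A \dimind_C B$ is defined symmetrically in $A$ and $B$ via universal quantification over finite subtuples. For Part~(2), fix arbitrary finite subtuples $A_0' \subseteq A'$ and $B_0' \subseteq B'$. By finite character of $\acl^\eq$, each coordinate of $A_0'$ lies in $\acl^\eq(CA_0^{(i)})$ for some finite $A_0^{(i)} \subseteq A$; pooling the finitely many $A_0^{(i)}$ gives a finite $A_0 \subseteq A$ with $A_0' \subseteq \acl^\eq(CA_0)$, and likewise on the $B$ side. The hypothesis then gives $A_0 \dimind_C B_0$, and Lemma~\ref{ind-1}(\ref{vier}) upgrades this to $A_0' \dimind_C B_0'$.

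For Part~(3), the forward direction uses finite character of $\dim$: assuming $a \dimind_C B$, pick a finite $B_0 \subseteq B$ with $\dim(a/CB_0) = \dim(a/CB)$; then Lemma~\ref{ind-1}(\ref{drei}) applied to $a \dimind_C B_0$ yields $\dim(a/C) = \dim(a/CB_0) = \dim(a/CB)$. Conversely, if $\dim(a/C) = \dim(a/CB)$, base monotonicity sandwiches $\dim(a/C) \geq \dim(a/CB_0) \geq \dim(a/CB)$ for every finite $B_0 \subseteq B$, forcing equality throughout and hence $a \dimind_C B_0$ by Lemma~\ref{ind-1}(\ref{drei}).

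Part~(4) follows formally from (1) and (3). Apply these to reduce, for each finite tuple $a$ from $A$, to a statement about dimensions; since $B_1 \subseteq B_2 \subseteq B_3$, the set $B_1 \cup B_i$ is just $B_i$, so we need
\[
\dim(a/B_1) = \dim(a/B_3) \iff \dim(a/B_1) = \dim(a/B_2) \text{ and } \dim(a/B_2) = \dim(a/B_3).
\]
This is immediate from the chain $\dim(a/B_1) \geq \dim(a/B_2) \geq \dim(a/B_3)$ provided by base monotonicity: equality at the ends forces equality at every link, and conversely. The only technical nuisance in the entire proof is keeping track of the finite-character reductions in Parts~(2) and~(3); there is no real mathematical obstacle.
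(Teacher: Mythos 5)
Your proposal is correct and follows essentially the same route as the paper's (much terser) proof: parts (1) and (2) reduce to Lemma~\ref{ind-1} via the definition of $\dimind$ on sets, part (3) uses finite character and base monotonicity to reduce to Lemma~\ref{ind-1}(\ref{drei}), and part (4) reduces to a finite tuple and the monotone chain $\dim(a/B_1) \ge \dim(a/B_2) \ge \dim(a/B_3)$. The extra bookkeeping you supply (pooling finite sets for the $\acl^\eq$ reduction in part (2), the sandwich argument in part (3)) is exactly what the paper leaves implicit.
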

\begin{proof}
  The first two points are clear from Lemma~\ref{ind-1}.  For part
  (\ref{sami}), use base monotonicity and finite character to reduce
  to the case where $B$ is finite, which is
  Lemma~\ref{ind-1}(\ref{drei}).  In part (\ref{otkhi}), we may assume
  $A$ is a finite tuple $a$, in which case (\ref{otkhi}) says
  \begin{equation*}
    \dim(a/B_1) = \dim(a/B_3) \iff (\dim(a/B_1) = \dim(a/B_2) \text{
      and } \dim(a/B_2) = \dim(a/B_3)).
  \end{equation*}
  This holds because $\dim(a/B_1) \ge \dim(a/B_2) \ge \dim(a/B_3)$ by
  base monotonicity.
\end{proof}

\begin{lemma} \label{type-ish}
  Let $a$ be a real tuple, possibly infinite.  Let $B \subseteq C$ be
  sets of imaginaries.  Then there is a consistent partial $\ast$-type
  $\Sigma_{a,B,C}(x)$ whose realizations are the tuples $a' \equiv_B
  a$ such that $a' \dimind_B C$.
\end{lemma}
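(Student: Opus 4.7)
My plan is to write $\Sigma_{a,B,C}(x)$ down explicitly and then verify both (i) that its realizations are exactly the $a' \equiv_B a$ with $a' \dimind_B C$, and (ii) that it is consistent by compactness. Explicitly, I would take
\[
  \Sigma_{a,B,C}(x) := \tp(a/B) \cup \{\neg\psi(x_0) : x_0 \subseteq x \text{ finite},\ C_0 \subseteq C \text{ finite},\ \psi \text{ is } BC_0\text{-definable},\ \dim(\psi) < \dim(a_{x_0}/B)\},
\]
where $a_{x_0}$ denotes the real finite subtuple of $a$ corresponding to the variables $x_0$. Since $x$ may be an infinite tuple, this is a partial $\ast$-type.

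The identification of realizations rests on Fact~\ref{continuity-01}(\ref{uno}): for any finite real tuple $a'_0$ and imaginary parameter set $BC_0$, $\dim(a'_0/BC_0)$ equals the minimum of $\dim(X)$ over $BC_0$-definable $X \ni a'_0$. Consequently, the negated formulas in $\Sigma$ encode the inequality $\dim(a'_{x_0}/BC_0) \geq \dim(a_{x_0}/B)$ for every finite $x_0, C_0$. Since $a' \equiv_B a$ forces $\dim(a'_{x_0}/B) = \dim(a_{x_0}/B)$, and base monotonicity yields the reverse inequality, this is precisely $\dim(a'_{x_0}/BC_0) = \dim(a'_{x_0}/B)$, which by Lemma~\ref{ind-1}(\ref{drei}) is $a'_{x_0} \dimind_B C_0$. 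Ranging over all finite $x_0, C_0$ gives $a' \dimind_B C$, and the converse direction reads the same chain of equivalences backward.

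For consistency, I would apply compactness. A finite subset of $\Sigma$ mentions only finitely many variables and only finitely many parameters from $C$, so it can be consolidated to involve a single finite subtuple $x_0$ of $x$, a single finite $C_0 \subseteq C$, a single $\varphi(x_0) \in \tp(a_{x_0}/B)$, and finitely many $BC_0$-definable sets $\psi_1, \ldots, \psi_k$ of dimension strictly less than $d := \dim(a_{x_0}/B)$. By the extension property applied to the finite real tuple $a_{x_0}$ with parameters $B \subseteq BC_0$, there exists $a'_0 \equiv_B a_{x_0}$ with $\dim(a'_0/BC_0) = d$; such $a'_0$ satisfies $\varphi$ and, by Fact~\ref{continuity-01}(\ref{uno}), cannot lie in any $BC_0$-definable set of dimension below $d$, so it avoids each $\psi_i$ and witnesses the finite subtype.

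I do not expect a serious obstacle here: all dimensional content is supplied by the already-established results of this section (extension, base monotonicity, Lemma~\ref{ind-1}, and Fact~\ref{continuity-01}(\ref{uno})), and the only mild subtlety is the standard bookkeeping of passing from the possibly infinite tuple $a$ to finite subtuples inside the compactness argument.
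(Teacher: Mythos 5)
Your proposal is correct and follows essentially the same route as the paper: the type is $\tp(a/B)$ together with negations of low-dimensional formulas over finite fragments of $C$, with Fact~\ref{continuity-01}(\ref{uno}) identifying the realizations and the extension property supplying consistency (the paper just performs the reduction to finite subtuples up front rather than inside the compactness argument). The only point to tidy is the consolidation step: distinct finite subtuples $x_0^{(i)}$ carry distinct thresholds $\dim(a_{x_0^{(i)}}/B)$, so rather than asserting all $\psi_i$ have dimension below a single $d$, one should note that a witness $a_0'$ with $\dim(a_0'/BC_0)=\dim(a_0'/B)$ has every subtuple satisfying $a'_{x_0^{(i)}}\dimind_B C_0$ (Lemma~\ref{ind-1}(\ref{vier})), hence avoiding each $\psi_i$.
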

\begin{proof}
  Let $I$ be a set indexing the tuple $a = (a_i : i \in I)$.  For each
  finite $J \subseteq I$, let $\pi_J$ be the projection map from
  $I$-tuples to $J$-tuples.  (For example, $\pi_J(a)$ is the finite
  subtuple of $a$ determined by $J$.)  If $a' \in \Mm^I$, note that
  \begin{itemize}
  \item $a' \equiv_B a$ holds iff $\pi_J(a') \equiv_B \pi_J(a)$ for
    all finite $J \subseteq I$.
  \item $a' \dimind_B C$ holds iff $\pi_J(a') \dimind_B C$ for
    all finite $J \subseteq I$, because of the definition of
    $\dimind$.
  \end{itemize}
  Therefore we may reduce to the case where $I$ is finite.  Let $n =
  \dim(a/B)$.  If $a' \equiv_B a$, then $\dim(a'/B) = n$ and so
  \begin{equation*}
    a' \dimind_B C \iff \dim(a'/B) \le \dim(a'/C) \iff n \le
    \dim(a'/C) \qquad \qquad \text{(for $a' \equiv_B a$)}.
  \end{equation*}
  Therefore
  \begin{equation*}
    \{a' \in \Mm^I : a' \equiv_B a, ~ a' \dimind_B C\} = \{a' \in
    \Mm^I : a' \equiv_B a, ~ \dim(a'/C) \ge n\}.
  \end{equation*}
  The condition $a' \equiv_B a$ is defined by the type $\tp(a/B)$.  By
  Fact~\ref{continuity-01}(\ref{uno}), the condition $\dim(a'/C) \ge n$ is
  defined by the type
  \begin{equation*}
    \{\neg \varphi(x,c) : \varphi \in L, ~ c \in C, ~ \dim(\varphi(\Mm,c)) < n\}.
  \end{equation*}
  Therefore $\{a' \in \Mm^I : a' \equiv_B a, ~ a' \dimind_B C\}$
  is type-definable.  Finally, the set is non-empty by the extension
  property of $\dim(-/-)$.
\end{proof}
\begin{proposition} \label{extension-2}
  Let $A, B, C$ be small subsets of $\Mm^\eq$.  Then there is $\sigma
  \in \Aut(\Mm/C)$ such that $\sigma(A) \dimind_C B$.
\end{proposition}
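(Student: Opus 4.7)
The plan is to reduce the imaginary case to the real case handled by Lemma~\ref{type-ish}. Since $A \subseteq \Mm^\eq$ is small, each $\alpha \in A$ can be written as $\alpha = [b_\alpha]_{E_\alpha}$ for some finite real tuple $b_\alpha$ and $0$-definable equivalence relation $E_\alpha$. Concatenating all these $b_\alpha$ over $\alpha \in A$ produces a small, possibly infinite real tuple $a \in \Mm^I$ with $A \subseteq \dcl^\eq(a)$.

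With $a$ in hand, I would apply Lemma~\ref{type-ish} with the lemma's base set taken to be our $C$ and its larger set taken to be $C \cup B$. The output is a consistent partial $*$-type $\Sigma(x)$ whose realizations are exactly the tuples $a'$ with $a' \equiv_C a$ and $a' \dimind_C CB$. By Proposition~\ref{ind-2}(\ref{sami}), the condition $a' \dimind_C CB$ unwinds to $\dim(a'/C) = \dim(a'/CB)$, which is the same as $a' \dimind_C B$; so the realizations of $\Sigma$ are exactly those $a' \equiv_C a$ with $a' \dimind_C B$.

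By saturation of $\Mm$, pick a realization $a' \in \Mm^I$ of $\Sigma$, and then choose $\sigma \in \Aut(\Mm/C)$ with $\sigma(a) = a'$. Since $A \subseteq \dcl^\eq(a)$, applying $\sigma$ gives $\sigma(A) \subseteq \dcl^\eq(a') \subseteq \acl^\eq(Ca')$. Proposition~\ref{ind-2}(\ref{silly-mon}) then propagates $a' \dimind_C B$ to $\sigma(A) \dimind_C B$, as required.

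The substantive work is done by Lemma~\ref{type-ish}; the only step here that takes any thought is the passage from the imaginary tuple $A$ to a real tuple $a$ with $A \subseteq \dcl^\eq(a)$, and this is standard. I do not anticipate a genuine obstacle beyond tracking the notation conflict between the lemma's ``$C$'' and the proposition's ``$C$''.
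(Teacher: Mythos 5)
Your proposal is correct and follows essentially the same route as the paper: take a real tuple $a$ with $A \subseteq \dcl^\eq(a)$, realize the type from Lemma~\ref{type-ish} to get $a' \equiv_C a$ with $a' \dimind_C B$, and transfer the independence to $\sigma(A)$ via Proposition~\ref{ind-2}(\ref{silly-mon}). Your extra care in applying the lemma with base $C$ and larger set $C \cup B$ (and then unwinding $a' \dimind_C CB$ to $a' \dimind_C B$) is a correct and slightly more explicit rendering of a step the paper leaves implicit.
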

\begin{proof}
  Take an infinite real tuple $a$ such that $A \subseteq \dcl^\eq(a)$.
  By Lemma~\ref{type-ish}, there is some $a' \equiv_C a$ such that $a'
  \dimind_C B$.  Equivalently, there is $\sigma \in \Aut(\Mm/C)$ such
  that $\sigma(a) \dimind_C B$.  Now $\sigma(A) \subseteq
  \dcl^\eq(\sigma(a))$, so $\sigma(A) \dimind_C B$ holds by
  Proposition~\ref{ind-2}(\ref{silly-mon}).
\end{proof}

\section{Interpretable topological spaces and definable compactness} \label{random-review}
Let $M$ be any structure.
\begin{definition}
  A topology on an interpretable set $X$ is an \emph{interpretable
  topology} if there is an interpretable basis of opens, i.e., an
  interpretable family $\{S_a\}_{a \in Y}$ such that $\{S_a : a \in
  Y\}$ is a basis for the topology.  An \emph{interpretable
  topological space} is an interpretable set with an interpretable
  topology.
\end{definition}
For example, if $M \models p\mathrm{CF}$ then $M^n$ with the standard
topology is an interpretable topological space.

A family of sets $\mathcal{F}$ is \emph{downwards-directed} if for any
$X, Y \in \mathcal{F}$, there is $Z \in \mathcal{F}$ with $Z \subseteq
X \cap Y$.  In topology, compactness can be defines as follows: a
topological space $X$ is compact if any downwards-directed family of
non-empty closed subsets of $X$ has non-empty intersection.
\begin{definition} \label{fornasiero-definition}
  An interpretable topological space $X$ is \emph{definably compact}
  (in the sense of Fornasiero) if every downwards-directed
  interpretable family of non-empty closed subsets of $X$ has
  non-empty intersection.  An interpretable subset $D \subseteq X$ is
  \emph{definably compact} if it is definably compact with respect to
  the subspace topology.
\end{definition}
Definable compactness in this sense was studied independently by
Fornasiero \cite{fornasiero} and the author \cite{wj-o-minimal}.  Many of the expected
properties hold:
\begin{fact} \phantomsection \label{dc-fact}
  \begin{enumerate}
\item \label{df1} If $X$ is an interpretable topological space and $X$ is compact,
  then $X$ is definably compact (clear).
\item If $X, Y$ are definably compact, then the disjoint union $X
  \sqcup Y$ and the product $X \times Y$ are definably compact
  \cite[Lemma~3.5(2), Proposition~3.7]{wj-o-minimal}.
\item If $X$ is a Hausdorff interpretable topological space and $D
  \subseteq X$ is a definably compact interpretable subset, then $D$
  is closed \cite[Lemma~3.8]{wj-o-minimal}.
\item Finite sets are definably compact (clear).
\item If $X$ is an interpretable topological space and $D_1, D_2$ are
  definably compact interpretable subsets, then the union $D_1 \cup
  D_2$ is definably compact \cite[Lemma~3.5(2)]{wj-o-minimal}.
\item If $f : X \to Y$ is an interpretable continuous map and $X$ is
  definably compact, then the image $f(X)$ is definably compact as a
  subset of $Y$ \cite[Lemma~3.4]{wj-o-minimal}.
\item Definable compactness and non-compactness are preserved in
  elementary extensions (clear).
\end{enumerate}
\end{fact}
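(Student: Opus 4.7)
The plan is to address the non-trivial items of Fact~\ref{dc-fact} one at a time; several are immediate, as the statement itself indicates. Throughout I would lean on the standard observation that a downwards-directed family of non-empty closed sets has the finite intersection property, so classical compactness (item~\ref{df1}) and finiteness (item~4) give definable compactness at once. For the elementary-extension clause, the condition ``for every interpretable downwards-directed family of non-empty closed sets, the intersection is non-empty'' is expressible by a first-order scheme, one axiom per defining formula, so it is preserved in elementary equivalents.

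For disjoint unions and for the union $D_1 \cup D_2$ of two definably compact interpretable subsets, I would argue by case analysis on the family $\mathcal{F}$. Either some $F_0 \in \mathcal{F}$ is contained in $D_1$, in which case $\{F \cap F_0 : F \in \mathcal{F}\}$ is an interpretable, cofinal, downwards-directed family of non-empty closed subsets of $D_1$, and definable compactness of $D_1$ applies; symmetrically for $D_2$; otherwise every $F$ meets both pieces, and $\{F \cap D_1 : F \in \mathcal{F}\}$ is already such a family in $D_1$. Any point of the intersection of the smaller family lies in $\bigcap \mathcal{F}$. For products $X \times Y$, I would first apply definable compactness of $X$ to the projected family $\{\pi_X(F) : F \in \mathcal{F}\}$ (passing to closures if necessary) to select $x_0$ in every projection, then apply definable compactness of $Y$ to the fiber family $\{\{y \in Y : (x_0, y) \in F\} : F \in \mathcal{F}\}$ to produce the required point.

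For item 3 (Hausdorff plus definably compact implies closed), fix $p \in X \setminus D$ and consider the interpretable downwards-directed family $\{D \setminus U : U \text{ a basic open neighborhood of } p\}$ of closed subsets of $D$. If every such $D \setminus U$ were non-empty, definable compactness of $D$ would yield a point $q \in D$ lying outside every basic open neighborhood of $p$; by Hausdorffness this forces $q = p$, contradicting $p \notin D$. Hence some basic open $U \ni p$ is disjoint from $D$, so $X \setminus D$ is open. For item 6, pulling back gives an interpretable downwards-directed family $\{f^{-1}(F) : F \in \mathcal{F}\}$ of non-empty closed subsets of $X$, and any point in its intersection maps into $\bigcap \mathcal{F}$.

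The main obstacle I anticipate is the product in item 2: maintaining interpretability and downward-directedness of the projected and fiber families, and verifying that the two iterated choices of $x_0$ and then $y$ genuinely land in $\bigcap \mathcal{F}$ rather than merely in each $F$ separately, is the only step that resists a one-line argument and is the reason the cited Proposition~3.7 of \cite{wj-o-minimal} is stated as a separate result.
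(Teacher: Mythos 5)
Most of your items go through (1, 4, 5, 6, 7, and the disjoint-union half of 2 are fine, modulo tidying the case analysis in the union argument), and the paper itself only cites \cite{wj-o-minimal} for these, so there is no in-paper proof to compare against. But two of your arguments have genuine gaps.

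Your argument for item 3 does not work as written. First, the family $\{D \setminus U : U \ni p \text{ basic open}\}$ is generally \emph{not} downwards-directed: $(D\setminus U_1)\cap(D\setminus U_2)=D\setminus(U_1\cup U_2)$, and to find a member below it you would need a basic neighborhood $U_3$ of $p$ with $D\cap(U_1\cup U_2)\subseteq U_3$, which need not exist. Second, the negation of ``every $D\setminus U$ is non-empty'' is ``$D\subseteq U$ for some $U$,'' not ``some $U$ is disjoint from $D$.'' Third, the Hausdorff step is inverted: a point $q\in D$ lying outside every basic neighborhood of $p$ satisfies $q\neq p$ (since $p$ lies in all of them), so no contradiction arises --- indeed such $q$ exist whenever $D$ is far from $p$. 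The correct argument takes $p\in\overline{D}\setminus D$ and the family $\{\overline{U}\cap D : U\ni p \text{ basic open}\}$: these are non-empty (as $p\in\overline{D}$), closed in $D$, and downwards-directed (shrink to a basic $U_3\subseteq U_1\cap U_2$ and use $\overline{U_3}\subseteq\overline{U_1}\cap\overline{U_2}$). Definable compactness yields $q\in D$ with $q\in\overline{U}$ for every basic $U\ni p$; since $q\neq p$, Hausdorffness gives disjoint opens separating them, and shrinking the one around $p$ to a basic $U$ gives $q\notin\overline{U}$, a contradiction.

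For the product in item 2, the obstacle you flag is real and your sketch does not overcome it: after choosing $x_0\in\bigcap_F\overline{\pi_X(F)}$, the fiber $\{y:(x_0,y)\in F\}$ may be empty, because $x_0$ need only lie in the \emph{closure} of $\pi_X(F)$, not in $\pi_X(F)$ itself; so the fiber family is not a family of non-empty sets and definable compactness of $Y$ cannot be applied to it. The standard repair is a tube-lemma-style second family: take $\mathcal{H}=\{\overline{\pi_Y(F\cap(U\times Y))} : F\in\mathcal{F},\ U\ni x_0 \text{ basic open}\}$, whose members are non-empty precisely because $x_0\in\overline{\pi_X(F)}$, and which is downwards-directed. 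A point $y_0\in\bigcap\mathcal{H}$ then satisfies $(x_0,y_0)\in F$ for all $F$: otherwise closedness of $F$ gives basic opens $U\ni x_0$, $V\ni y_0$ with $(U\times V)\cap F=\varnothing$, whence $y_0\notin\overline{\pi_Y(F\cap(U\times Y))}$. Without some such device the two-step choice of $x_0$ and then $y_0$ does not land in $\bigcap\mathcal{F}$.
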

In $p$-adically closed fields definable compactness has additional
nice properties:
\begin{fact} \phantomsection \label{dc-fact-pcf}
  \begin{enumerate}
\item \label{dfp1} If $M$ is a $p$-adically closed field, a definable set $X
  \subseteq M^n$ is definably compact iff $X$ is closed and bounded
  \cite[Lemmas~2.4,2.5]{johnson-yao}.
\item Consequently, if $M = \Qq_p$, then a definable set $X \subseteq
  M^n$ is definably compact iff it is compact.
\end{enumerate}
\end{fact}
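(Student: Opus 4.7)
The plan is to prove part (\ref{dfp1}) directly and then derive the second item as an immediate corollary using the classical Heine-Borel theorem for $\Qq_p$.

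For the forward direction of (\ref{dfp1}), suppose $X \subseteq M^n$ is definably compact. Closedness is immediate from Fact~\ref{dc-fact}(3), since $M^n$ is Hausdorff. For boundedness, I would argue by contradiction: if $X$ is unbounded, let $F_\gamma = \{x \in X : \min_i v(x_i) \le -\gamma\}$ for $\gamma$ ranging over the value group. Each $F_\gamma$ is closed in the closed set $X$, non-empty by unboundedness, and the family is downwards-directed since $F_{\gamma_1} \cap F_{\gamma_2} \supseteq F_{\max(\gamma_1, \gamma_2)}$; but no point in $M^n$ has a coordinate of infinite negative valuation, so $\bigcap_\gamma F_\gamma = \varnothing$, contradicting definable compactness.

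For the backward direction, assume $X$ is closed and bounded. By boundedness, there is $\lambda \in M^\times$ with $X \subseteq \lambda \cdot \Oo^n$. Since a closed subset of a definably compact interpretable space is itself definably compact (immediate from Definition~\ref{fornasiero-definition}), it suffices to show that $\Oo^n$ is definably compact. The key point is that $\Oo^n$ is $\varnothing$-definable and definable compactness of a $\varnothing$-definable set is an elementary property: by Fact~\ref{dc-fact}(7), it is preserved when passing to elementary extensions, and since non-compactness is also preserved, it passes back down as well. Taking a common elementary extension of $M$ and $\Qq_p$, the problem reduces to showing $\Zz_p^n$ is definably compact in $\Qq_p$, which follows from Fact~\ref{dc-fact}(1) since $\Zz_p^n$ is topologically compact.

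The second item is then immediate: Fact~\ref{dc-fact}(1) gives compact $\Rightarrow$ definably compact, while definably compact $\Rightarrow$ closed and bounded $\Rightarrow$ compact via part (\ref{dfp1}) and the classical Heine-Borel theorem for $\Qq_p$. The main subtlety throughout is that Fornasiero's definition quantifies over interpretable families in $p\mathrm{CF}^\eq$, so one must trust that the elementary-equivalence transfer of Fact~\ref{dc-fact}(7) handles interpretable (not merely definable) families correctly; granted that, the content at the base of the argument is the routine finite intersection property for closed subsets of the topologically compact space $\Zz_p^n$.
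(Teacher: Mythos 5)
Your proof is correct, but note that the paper does not actually prove this statement: it is quoted as a Fact with a citation to [johnson-yao, Lemmas 2.4, 2.5], so your argument is a self-contained substitute rather than a reconstruction. Both directions of (\ref{dfp1}) check out. The unboundedness witness $F_\gamma = \{x \in X : \min_i v(x_i) \le -\gamma\}$ is a legitimate downwards-directed interpretable family (indexed by the interpretable set $\Gamma$) of clopen, hence closed, nonempty subsets with empty intersection, exactly as Definition~\ref{fornasiero-definition} requires. For the converse, the reduction to $\Oo^n$ via ``closed subsets of definably compact spaces are definably compact'' is immediate from the definition, and the transfer to $\Qq_p$ is sound: for a $\varnothing$-definable set, failure of definable compactness is witnessed by a first-order sentence for each candidate defining formula (``$\exists z$ such that $\varphi(\cdot,\cdot;z)$ defines a downwards-directed family of nonempty closed sets and $\neg\exists x\,\forall a\,(x\in F_a)$''), so it is an invariant of elementary equivalence; this is precisely what the paper asserts as ``clear'' in Fact~\ref{dc-fact}(7), so you are entitled to it. The one stylistic difference worth noting is that the cited source argues directly inside an arbitrary model $M$ (roughly, a finite-intersection-property argument on $\Oo^n$ using the structure of definable sets), whereas your route buys the hard direction by outsourcing it to the topological compactness of $\Zz_p^n$ via completeness of $\pCF$; your version is shorter but leans more heavily on the elementary-transfer fact, which is fine here since the paper states it independently.
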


\section{Admissible topologies in $p$-adically closed fields} \label{adm-sec-1}
Work in a monster model $\Mm$ of $\pCF$.  We give $\Mm$ the standard
valuation topology and $\Mm^n$ the product topology.  If $X \subseteq
\Mm^n$ is definable, give $X$ the subspace topology.  This allows us
to regard any definable set as a definable topological space.
\begin{definition}
  Let $X$ be a Hausdorff interpretable topological space.
  \begin{enumerate}
  \item \label{ab1} $X$ is a \emph{definable manifold} if $X$ is covered by
    finitely many open subsets $U_1, \ldots, U_n$, such that $U_i$ is
    interpretably homeomorphic to an open definable subset of
    $\Mm^{k_i}$ for some $k_i$.
  \item \label{ab2} $X$ is \emph{locally Euclidean} if for every point $p \in X$,
    there is a neighborhood $U \ni p$ and an interpretable
    homeomorphism from $U$ to an open subset of $\Mm^n$ for some $n$
    depending on $p$.
  \item $X$ is \emph{locally definable} if for every point $p \in X$,
    there is a neighborhood $U \ni p$ and an interpretable
    homeomorphism from $U$ to a definable subspace of $\Mm^n$ for some
    $n$ depending on $p$.
  \end{enumerate}
\end{definition}
\begin{remark}
  \begin{enumerate}
  \item In (\ref{ab1}) and (\ref{ab2}), we allow the dimension to vary
    from point to point(!)
  \item Definable manifolds and locally Euclidean spaces are similar.
    The difference is that in a definable manfiold the atlas is
    finite, whereas in a locally Euclidean space the atlas can be
    infinite.
  \item Because we are working in a monster model, the atlas in local
    Euclideanity must be uniformly definable, i.e., the neighborhood
    $U \ni p$ and open embedding $U \to \Mm^n$ must have bounded
    complexity.  The same holds for local definability.
  \item $\Mm$ with the discrete topology is a locally Euclidean
    definable topological space that is not a definable manifold.
  \item If $X$ is a definable manifold, then $X$ is in interpretable
    bijection with a definable set.  So definable manifolds are
    essentially definable objects.
  \end{enumerate}
\end{remark}
Recall our convention that ``open map'' means ``continuous open map.''
\begin{definition}
  Let $X$ be a Hausdorff interpretable topological space.
  \begin{enumerate}
  \item $X$ is \emph{definably dominated} if there is an interpretable
    surjective open map $Y \to X$, for some definable set
    $Y \subseteq \Mm^n$ with the subspace topology.
  \item $X$ is \emph{manifold dominated} if there is an interpretable
    surjective open map $Y \to X$, for some definable
    manifold $Y$.
  \end{enumerate}
\end{definition}
\begin{lemma}\label{dom-trans}
  Let $X \to Y$ be an interpretable surjective  open map
  between two Hausdorff interpretable topological spaces $X$ and $Y$.
  \begin{enumerate}
  \item \label{dt1} If $X$ is definably dominated, then $Y$ is definably dominated.
  \item \label{dt2} If $X$ is manifold dominanted, then $Y$ is manifold dominated.
  \end{enumerate}
\end{lemma}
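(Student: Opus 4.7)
The plan is to prove both parts at once by the obvious observation that the relevant class of maps is closed under composition. Concretely, suppose we are given an interpretable surjective open map $f : X \to Y$.

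For part (\ref{dt1}), assume $X$ is definably dominated, and fix an interpretable surjective open map $g : Z \to X$ with $Z \subseteq \Mm^n$ definable (with the subspace topology). I would consider the composite $f \circ g : Z \to Y$ and argue it witnesses definable domination of $Y$. Surjectivity is immediate from surjectivity of $f$ and $g$. Interpretability is clear, since the graph of the composite is defined by interpretable quantification. For continuity and openness, I would invoke the convention that ``open map'' already means ``continuous open map,'' so both $f$ and $g$ are continuous and open; their composite is then continuous (composition of continuous maps) and open (the image of an open set under $g$ is open in $X$, and then its image under $f$ is open in $Y$). Hence $f \circ g : Z \to Y$ is an interpretable continuous open surjection from a definable subset of $\Mm^n$, so $Y$ is definably dominated.

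Part (\ref{dt2}) follows by exactly the same argument, except that the witnessing space $Z$ is now a definable manifold rather than a definable subset of $\Mm^n$. No property of the witness $Z$ other than its type (definable subset / definable manifold) enters the argument, so the proof transfers verbatim.

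There is essentially no obstacle here: the lemma is a purely formal ``transitivity'' statement for a class of dominated objects, and the only thing to verify is the stability of the relevant class of morphisms (interpretable, continuous, open, surjective) under composition. I would keep the write-up to two or three lines per part.
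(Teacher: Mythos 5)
Your proof is correct and is exactly the paper's argument: take a witnessing map $Z \to X$ and observe that the composition $Z \to X \to Y$ witnesses domination of $Y$, using that interpretable surjective (continuous) open maps are closed under composition. No differences worth noting.
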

\begin{proof}
  We prove (\ref{dt1}); (\ref{dt2}) is similar.  Take a definable set $W \subseteq
  \Mm^n$ and an interpretable surjective  open map $W \to X$
  witnessing the fact that $X$ is definably dominated.  Then the
  composition $W \to X \to Y$ witnesses that $Y$ is definably
  dominated.
\end{proof}
\begin{lemma}
  If $X$ is manifold dominated, then $X$ is definably dominated.
\end{lemma}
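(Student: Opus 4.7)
The plan is to reduce to showing that a definable manifold is itself definably dominated, and then invoke Lemma~\ref{dom-trans}(\ref{dt1}). So suppose $X$ is manifold dominated, witnessed by an interpretable surjective open map $g \colon Y \to X$ with $Y$ a definable manifold. If I can produce a definable $W \subseteq \Mm^N$ and an interpretable surjective open map $f \colon W \to Y$, then the composition $g \circ f \colon W \to X$ is interpretable, surjective, and open (being a composition of two such maps, recalling the convention that open maps are continuous), so it witnesses that $X$ is definably dominated. Equivalently, Lemma~\ref{dom-trans}(\ref{dt1}) applied to $g$ gives the conclusion directly once $Y$ is known to be definably dominated.

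To produce $f$, unpack the definition of a definable manifold. There is a finite open cover $Y = U_1 \cup \cdots \cup U_n$ together with interpretable homeomorphisms $\varphi_i \colon U_i \to V_i$ where each $V_i \subseteq \Mm^{k_i}$ is an open definable subset. Set $k = \max_i k_i$ and embed each $V_i$ into $\Mm^k$ by padding with zeros; then form
\[
W = \bigsqcup_{i=1}^{n} V_i \;\subseteq\; \Mm^k \times \{1,\ldots,n\} \;\subseteq\; \Mm^{k+1},
\]
which is a definable subset of $\Mm^{k+1}$ carrying the subspace topology, and which is homeomorphic to the topological disjoint union of the $V_i$ (each $V_i$ is open in $\Mm^{k_i}$, so this embedding is a homeomorphism onto its image, which is open in the slice $\Mm^k \times \{i\}$). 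Define $f \colon W \to Y$ by $f|_{V_i} = \varphi_i^{-1}$. This is interpretable, and surjective because the $U_i$ cover $Y$.

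The only point to check carefully is that $f$ is a continuous open map. Continuity on each piece $V_i$ is immediate since $\varphi_i$ is a homeomorphism, and the $V_i$ are clopen in $W$, so continuity is patched together in the usual way. For openness, let $\Omega \subseteq W$ be open; then $\Omega \cap V_i$ is open in $V_i$, so $\varphi_i^{-1}(\Omega \cap V_i)$ is open in $U_i$, hence open in $Y$ since $U_i$ is open in $Y$. Thus $f(\Omega) = \bigcup_i \varphi_i^{-1}(\Omega \cap V_i)$ is open in $Y$, so $f$ is open. There is no real obstacle here — the argument is essentially just the observation that a finite atlas can be repackaged as a single definable set via disjoint union — but one must be slightly careful to realize the abstract disjoint union as a genuine definable subset of some $\Mm^N$, which is why the extra coordinate indexing the chart is added.
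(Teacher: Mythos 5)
Your proof is correct and follows essentially the same route as the paper: reduce via Lemma~\ref{dom-trans}(\ref{dt1}) to showing a definable manifold is definably dominated, realize the finite atlas as a definable disjoint union $W \subseteq \Mm^N$, and check that the natural map $W \to Y$ is a surjective open map. You have merely spelled out the details (the explicit coding of the disjoint union and the verification of openness) that the paper leaves implicit.
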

\begin{proof}
  By Lemma~\ref{dom-trans}, it suffices to show that definable
  manifolds are definably dominated.  Let $X$ be a definable manifold.
  By definition there are definable open sets $U_i \subseteq
  \Mm^{n_i}$ for $1 \le i \le m$ and open embeddings $f_i : U_i \to X$
  which are jointly surjective.  The disjoint union $U_1 \sqcup \cdots
  \sqcup U_m$ is homeomorphic to a definable set $D \subseteq \Mm^N$
  for sufficiently large $N$.  The natural map $U_1 \sqcup \cdots
  \sqcup U_m \to X$ is an interpretable surjective open map.
  Therefore $X$ is definably dominated.
\end{proof}
\begin{definition} \label{adm-def}
  Let $X$ be a Hausdorff interpretable topological space.
  \begin{enumerate}
  \item $X$ is \emph{admissible} if $X$ is definably dominated and
    locally definable.
  \item $X$ is \emph{strongly admissible} if $X$ is manifold dominated
    and locally Euclidean.
  \end{enumerate}
  We say that an interpretable topology $\tau$ on an interpretable set
  $X$ is \emph{admissible} or \emph{strongly admissible} if the space
  $(X,\tau)$ is admissible or strongly admissible, respectively.
\end{definition}
In \cite[Section~4]{wj-o-minimal}, ``admissibility'' was used to refer
to the weaker condition of definable domination, without local
definability.  However, the frontier dimension inequality fails
without local definability (Proposition~\ref{frontier-dimension},
Remark~\ref{why-definably-dominated}).
\begin{example} \label{upsilon}
  If $X$ is a definable manifold, then $X$ is strongly admissible.
  (The identity map $\id_X$ witnesses that $X$ is manifold dominated.)
  Similarly, if $X \subseteq \Mm^n$ is a definable set with the
  subspace topology, then $X$ is admissible.
\end{example}
% ODOT: add a remark about "definable spaces" in the sense of Pillay or whoever
% The things to mention
% - they are admissible for the same reason that definable manifolds are strongly admissible
% - like definable manifolds, they are "definable" objects
% - we won't be using them
\begin{example} \label{gamma}
  The discrete topology on the value group $\Gamma$ is strongly
  admissible.  The valuation map $\Mm^\times \to \Gamma$ witnesses
  manifold-domination.  In contrast, the discrete topology on $\Mm$ is
  \emph{not} admissible, by Proposition~\ref{local-dim} below.
\end{example}
\begin{example} \label{half-compactification}
  Let $\Gamma_\infty$ be the extended value group $\Gamma \cup
  \{+\infty\}$.  Consider the topology on $\Gamma_\infty$ with basis
  \begin{equation*}
    \{\{\gamma\} : \gamma \in \Gamma\} \cup \{[\gamma,+\infty] : \gamma \in \Gamma\}.
  \end{equation*}
  The valuation map $\Mm \to \Gamma_\infty$ shows that $\Gamma_\infty$
  is manifold dominated.  However, $\Gamma_\infty$ is not locally
  definable.  The point $+\infty \in \Gamma_\infty$ has the property
  that every neighborhood is infinite and 0-dimensional.  This cannot
  happen in a definable set, where being 0-dimensional is equivalent
  to being finite.
\end{example}
\begin{example} \label{impl-1}
  Let $D \subseteq \Mm^2$ be the definable set $\{(x,y) \in \Mm^2 : x
  = 0 ~ \vee ~ y \ne 0\}$ with the subspace topology.  Then $D$ is
  admissible.  We will see below that $D$ is not manifold dominated
  (Example~\ref{impl-2}).
\end{example}

\subsection{Closure properties} \label{closure-props}
\begin{remark} \label{finite-strong}
  Let $X$ be a finite interpretable set with the discrete topology.
  Then $X$ is strongly admissible.  In fact, $X$ is a definable
  manifold.
\end{remark}
In the category of topological spaces, the class of  open
maps is closed under composition and base change.  Consequently, if
$f_i : Y_i \to X_i$ is an  open map for $i = 1, 2$, then the
product map $Y_1 \times Y_2 \to X_1 \times X_2$ is also an
open map.
\begin{proposition} \label{products-etc}
  The following classes of interpretable topological spaces are closed
  under finite disjoint unions and finite products.
  \begin{enumerate}
  \item \label{pe2} The class of locally definable spaces.
  \item \label{pe1} The class of locally Euclidean spaces.
  \item \label{pe3} The class of definably dominated spaces.
  \item \label{pe4} The class of manifold dominated spaces.
  \item \label{pe5} The class of admissible spaces.
  \item \label{pe6} The class of strongly admissible spaces.
  \end{enumerate}
\end{proposition}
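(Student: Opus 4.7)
The plan is to treat each of the six classes in turn, handling finite disjoint unions and finite binary products separately within each; the general finite case then follows by induction. Two ingredients recur throughout. First, the disjoint union $D_1 \sqcup D_2$ of definable subsets $D_i \subseteq \Mm^{n_i}$ is definably homeomorphic to a definable subset of $\Mm^{\max(n_1,n_2)+1}$ via the obvious tagged embedding whose last coordinate distinguishes the two summands. Second, as noted in the remark preceding the proposition, open surjections in the category of topological spaces are closed under binary product and disjoint union, and Hausdorffness is likewise preserved under both operations.

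For locally definable and locally Euclidean spaces (parts (\ref{pe2}) and (\ref{pe1})), the properties are pointwise-local. For a disjoint union, a chart at $p \in X_1 \sqcup X_2$ is simply a chart at $p$ in whichever factor contains it. For a product, given interpretable charts $f_i : U_i \to V_i \subseteq \Mm^{k_i}$ of $X_i$ around $p_i$, the product map $f_1 \times f_2 : U_1 \times U_2 \to V_1 \times V_2 \subseteq \Mm^{k_1+k_2}$ is an interpretable homeomorphism onto an open (respectively, an arbitrary definable) subset, which provides the desired chart at $(p_1,p_2)$.

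For definably dominated and manifold dominated spaces (parts (\ref{pe3}) and (\ref{pe4})), I would fix witnessing interpretable surjective open maps $g_i : Y_i \to X_i$ with each $Y_i$ a definable subset of some $\Mm^{N_i}$ (respectively, a definable manifold), and form $g_1 \sqcup g_2 : Y_1 \sqcup Y_2 \to X_1 \sqcup X_2$ and $g_1 \times g_2 : Y_1 \times Y_2 \to X_1 \times X_2$. Both are interpretable surjective open maps by the observation above. The product $Y_1 \times Y_2$ is a definable subset of $\Mm^{N_1+N_2}$ in the first case and admits the product atlas in the second; the disjoint union $Y_1 \sqcup Y_2$ is a definable set via the tagged-embedding trick in the first case and a definable manifold (by concatenating the two finite atlases, whose chart targets already sit in the appropriate $\Mm^{k_i}$'s) in the second. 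The admissible and strongly admissible cases (parts (\ref{pe5}) and (\ref{pe6})) then follow immediately by combining the appropriate local property with the appropriate domination property, together with preservation of Hausdorffness. I expect no serious obstacle; the only nontrivial input is the tagged-embedding trick for placing two definable sets of possibly different ambient dimensions into a common $\Mm^N$, and the rest is routine bookkeeping.
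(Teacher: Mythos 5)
Your proposal is correct and follows essentially the same route as the paper: reduce to the binary cases, note that the local properties (locally definable, locally Euclidean) pass to disjoint unions and products via charts, and push the witnessing open surjections through disjoint unions and products for the domination properties, identifying $Y_1 \sqcup Y_2$ with a definable set (your tagged embedding makes explicit what the paper leaves as "up to definable homeomorphism"). The only detail the paper handles that you gloss over is the $0$-ary case (empty disjoint union and empty product), which it dispatches with the observation that finite discrete spaces are strongly admissible.
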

\begin{proof}
  We focus on binary disjoint unions and binary products.  (The 0-ary
  cases are handled by Remark~\ref{finite-strong}.)
  
  Cases (\ref{pe1}) and (\ref{pe2}) are straightforward.  For
  (\ref{pe3}), let $X_1, X_2$ be two definably dominated spaces.  Let
  $f_i : Y_i \to X_i$ be a map witnessing definable domination for $i
  = 1, 2$.  Up to definable homeomorphism, $Y_1 \sqcup Y_2$ is a
  definable set, and so $Y_1 \sqcup Y_2 \to X_1 \sqcup X_2$ witnesses
  that $X_1 \sqcup X_2$ is definably dominated.  Similarly, $Y_1
  \times Y_2 \to X_1 \times X_2$ witnesses that $X_1 \times X_2$ is
  definably dominated.  The proof for (\ref{pe4}) is similar.  Then
  (\ref{pe2}) and (\ref{pe3}) give (\ref{pe5}), while (\ref{pe1}) and
  (\ref{pe4}) give (\ref{pe6}).
\end{proof}
Say that a class $\mathcal{C}$ of interpretable topological spaces is
``closed under subspaces'' if $\mathcal{C}$ contains every
interpretable subspace of every $X \in \mathcal{C}$.
\begin{proposition}\label{subspaces}
  The following classes of interpretable topological spaces are closed
  under subspaces:
  \begin{enumerate}
  \item \label{ss1} The class of locally definable spaces.
  \item \label{ss2} The class of definably dominated spaces.
  \item \label{ss3} The class of admissible spaces.
  \end{enumerate}
\end{proposition}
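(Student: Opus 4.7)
The plan is to verify each of the three items in turn, relying on one common observation that I would record first: \emph{any interpretable subset of $\Mm^n$ is in fact definable}. This holds because each imaginary parameter is $0$-definably determined by a real tuple, so any $L^\eq$-formula whose free variables lie in the home sort can be rewritten as an $L$-formula with real parameters.

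For part (\ref{ss1}), I start with an interpretable subspace $X' \subseteq X$ of a locally definable $X$ and a point $p \in X'$. Local definability supplies a neighborhood $U \ni p$ in $X$ together with an interpretable homeomorphism $\phi : U \to D$ onto a definable $D \subseteq \Mm^n$. Then $U \cap X'$ is a neighborhood of $p$ in the subspace topology on $X'$, and $\phi$ restricts to an interpretable homeomorphism from $U \cap X'$ onto $\phi(U \cap X') \subseteq \Mm^n$; the latter image is interpretable, hence definable by the observation.

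For part (\ref{ss2}), I start with $f : Y \to X$ witnessing definable domination, where $Y \subseteq \Mm^n$ is definable, and with an interpretable $X' \subseteq X$. Setting $Y' := f^{-1}(X')$, the set $Y'$ is interpretable inside $\Mm^n$, hence definable. A routine topological computation shows that the restriction $f\vert_{Y'} : Y' \to X'$ is surjective and open (the image of $V = U \cap Y'$ for $U \subseteq Y$ open is $f(U) \cap X'$, which is open in the subspace topology on $X'$), so this restricted map witnesses that $X'$ is definably dominated.

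Part (\ref{ss3}) then follows at once, since admissibility is by definition local definability together with definable domination, and each of these has just been shown closed under interpretable subspaces. The only nontrivial ingredient is the observation about interpretable subsets of $\Mm^n$; beyond that, the argument is bookkeeping, and I do not expect any real obstacle.
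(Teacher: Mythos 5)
Your proposal is correct and follows essentially the same route as the paper: part (\ref{ss1}) by restricting charts, part (\ref{ss2}) by passing to the preimage $Y'=f^{-1}(X')$ (the paper phrases this as the pullback $X'\times_X Y$, which is the same set) and noting that an interpretable subset of a definable subset of $\Mm^n$ is definable, and part (\ref{ss3}) as the conjunction of the first two. The openness computation $f(U\cap Y')=f(U)\cap X'$ is the direct verification of the paper's appeal to stability of open surjections under pullback, so there is no substantive difference.
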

\begin{proof}
  (\ref{ss1}) is straightforward, and (\ref{ss1}) and (\ref{ss2}) give (\ref{ss3}), so it suffices to
  prove (\ref{ss2}).  Suppose $X$ is definably dominated, witnessed by an
  interpretable surjective open map $Y \to X$.  Let $X'$ be
  an interpretable subspace of $X$.  Let $Y'$ be the pullback $X'
  \times_X Y$:
  \begin{equation*}
    \xymatrix{ Y' \ar[d] \ar[r] & Y \ar[d] \\ X' \ar[r] & X.}
  \end{equation*}
  Then $Y'$ is an interpretable subspace of the definable set $Y
  \subseteq \Mm^n$, so $Y'$ is also a definable subset of $\Mm^n$.
  The map $Y' \to X'$ is open and surjective because it is the
  pullback of the open and surjective map $X' \to X$.  Then $Y' \to
  X'$ witnesses that $X'$ is definably dominated.
\end{proof}
The class of strongly admissible spaces is \emph{not} closed under
subspaces; local Euclideanity is not preserved.
\begin{remark} \label{open-obvious}
  If $X$ is strongly admissible and $U$ is an interpretable open
  subset, then $U$ (with the subspace topology) is strongly
  admissible.  Indeed, $U$ is Hausdorff and locally Euclidean, and if
  $Y$ is a definable manifold with an interpretable surjective
   open map $f : Y \to X$, then $f^{-1}(U)$ is a definable
  manifold and $f^{-1}(U) \to U$ shows that $U$ is manifold dominated.
\end{remark}

\subsection{Construction of strongly admissible topologies} \label{ss-construct}
In this section, we show that every interpretable set admits at least
one strongly admissible topology.  The proofs closely follow the
proofs in the o-minimal case \cite[Section~3]{wj-o-minimal}, and we
omit the proofs when no changes are needed.
\begin{definition} \label{def-oq}
  Let $X$ be a topological space and $E$ be an equivalence relation on
  $X$.  Consider $X/E$ with the quotient topology.
  \begin{enumerate}
  \item $E$ is an \emph{OQ equivalence
  relation} if $X \to X/E$ is an open map.
  \item If $D \subseteq X$, then the \emph{$E$-saturation of $D$},
    written $D^E$, is the union of the $E$-equivalence classes
    intersecting $D$.
  \end{enumerate}
\end{definition}
``OQ'' stands for ``open quotient.''  In \cite{wj-o-minimal}, OQ equivalence relations were called ``open equivalence relations,'' and the $E$-saturation was called the
``$E$-closure.''
\begin{remark} \label{criterion-for-oer}
  Let $X$ be a topological space with basis $\mathcal{B}$, and let $E$
  be an equivalence relation on $X$.  The following are equivalent:
  \begin{enumerate}
  \item $E$ is an OQ equivalence relation on $X$.
  \item For any open set $U \subseteq X$, the $E$-saturation $U^E$ is
    open.
  \item For any basic open set $B \in \mathcal{B}$, the $E$-saturation
    $B^E$ is open.
  \item \label{cfo4} If $p,p' \in X$ and $pEp'$ and $B \in \mathcal{B}$ is a basic
    neighborhood of $p$, then there is $B' \in \mathcal{B}$ a basic
    neighborhood of $p'$, such that $\forall x \in B' ~ \exists y \in
    B ~ xEy$.  (That is, $B' \subseteq B^E$.)
  \end{enumerate}
\end{remark}
\begin{fact}[{\cite[Lemma~3.12]{wj-o-minimal}}]
  \label{why-open}
  Let $X$ be an interpretable topological space and $E$ be an
  interpretable OQ equivalence relation on $X$.  Then the quotient
  topology on $X/E$ is interpretable.
\end{fact}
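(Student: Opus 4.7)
The plan is to push an interpretable basis of $X$ forward through the quotient map $\pi \colon X \to X/E$ and check that the images form an interpretable basis for the quotient topology.

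First, I fix an interpretable basis $\{S_a\}_{a \in Y}$ witnessing that the topology on $X$ is interpretable, and I consider the family $\{\pi(S_a)\}_{a \in Y}$ of subsets of $X/E$. This family is interpretable because the condition ``$[x] \in \pi(S_a)$'' unpacks to ``$\exists x' \in S_a ~ x E x'$,'' which is an interpretable condition in $x$ and $a$; equivalently, $\pi(S_a)$ corresponds under the bijection between $E$-saturated subsets of $X$ and subsets of $X/E$ to the $E$-saturation $S_a^E$, which is interpretable.

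Next, each $\pi(S_a)$ is open in the quotient topology. Since $E$ is OQ, the map $\pi$ is open by definition, so $\pi(S_a)$ is open; alternatively, using the characterization in Remark~\ref{criterion-for-oer}, the saturation $S_a^E = \pi^{-1}(\pi(S_a))$ is open in $X$, which is precisely the condition for $\pi(S_a)$ to be open in the quotient topology.

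Finally, I verify that $\{\pi(S_a)\}_{a \in Y}$ is in fact a basis for the quotient topology. Let $V \subseteq X/E$ be any open set and let $[x] \in V$. Then $\pi^{-1}(V)$ is open in $X$ and contains $x$, so there exists $a \in Y$ with $x \in S_a \subseteq \pi^{-1}(V)$. Applying $\pi$ and using surjectivity gives $[x] \in \pi(S_a) \subseteq \pi(\pi^{-1}(V)) = V$, as required. There is no real obstacle here; the whole role of the OQ hypothesis is to make the forward images of basic opens actually open, after which the rest is routine bookkeeping.
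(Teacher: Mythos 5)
Your proof is correct and is essentially the standard argument (the same one used for \cite[Lemma~3.12]{wj-o-minimal}): push an interpretable basis of $X$ forward along the open quotient map, note that openness of the images is exactly the OQ condition $\pi^{-1}(\pi(S_a)) = S_a^E$ being open, and check the basis property via $\pi(S_a) \subseteq \pi(\pi^{-1}(V)) = V$. No gaps.
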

When $E$ is not an OQ equivalence relation, the quotient topology on
$X/E$ can fail to be interpretable.  For example, this happens when $X
= \Mm^2$ and $E$ is the equivalence relation collapsing the $x$-axis
to a point.
\begin{fact}[{\cite[Lemma~3.13]{wj-o-minimal}}]
  \label{open-to-subs}
  Let $X$ be a topological space and $E$ be an OQ equivalence
  relation on $X$.  Let $X'$ be an open subset of $X$ and let $E'$ be
  the restriction $E \restriction X$.
  \begin{enumerate}
  \item \label{ots1} $E'$ is an OQ equivalence relation on $X$.
  \item $X'/E' \to X/E$ is an open embedding.
  \end{enumerate}
\end{fact}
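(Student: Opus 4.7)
My plan is to prove the two parts in order, using the characterization of OQ equivalence relations from Remark~\ref{criterion-for-oer}: an equivalence relation is OQ exactly when the saturation of every open set is open. The central observation that drives both parts is that for any $U \subseteq X'$, the $E'$-saturation of $U$ inside $X'$ coincides with $U^E \cap X'$, where $U^E$ is the $E$-saturation inside $X$. This identity holds because $E'$ is defined as the restriction $E \restriction X'$, so $x E' y$ iff $x E y$ and $x,y \in X'$, and any witness $y \in U$ automatically lies in $X'$.

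For part (1), let $U$ be open in $X'$. Because $X'$ is open in $X$, $U$ is also open in $X$, so the OQ hypothesis for $E$ gives that $U^E$ is open in $X$. Intersecting with the open set $X'$ yields an open subset of $X'$ which, by the observation above, equals the $E'$-saturation of $U$ in $X'$. Remark~\ref{criterion-for-oer} then concludes that $E'$ is OQ on $X'$.

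For part (2), let $f : X'/E' \to X/E$ be the natural map induced by inclusion, and let $q_{E'}$, $q_E$ denote the respective quotient maps. The map $f$ is well-defined and injective since $E'$ is the restriction of $E$ to $X'$. Continuity of $f$ follows from the universal property of the quotient on $X'/E'$, because the composition $X' \hookrightarrow X \twoheadrightarrow X/E$ is continuous. To establish openness, let $V \subseteq X'/E'$ be open and set $U = q_{E'}^{-1}(V)$, which is open in $X'$ and hence in $X$. A direct computation identifies $q_E^{-1}(f(V))$ with the $E$-saturation $U^E$, which is open in $X$ by the OQ hypothesis, so $f(V)$ is open in $X/E$. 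Applying this with $V = X'/E'$ shows that the image of $f$ is open in $X/E$, and a continuous, open injection onto an open subset of its codomain is a homeomorphism onto that open subset, i.e., an open embedding.

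The main obstacle here is simply bookkeeping: one must use the openness of $X'$ in $X$ at the step where $U \subseteq X'$ is viewed as an open set of $X$, and separately invoke the restriction property of $E'$ for the saturation identity. There is no deep step, and essentially the same argument would work in a purely topological setting once the ``interpretable'' quotient topologies are known to be genuine topologies, which is guaranteed by Fact~\ref{why-open}.
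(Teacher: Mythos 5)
Your proof is correct, and it is the standard argument: the key identity that the $E'$-saturation of $U\subseteq X'$ equals $U^E\cap X'$, combined with the saturation criterion of Remark~\ref{criterion-for-oer}, is exactly what is needed for both parts. The paper itself does not prove this statement --- it imports it as a Fact from \cite[Lemma~3.13]{wj-o-minimal} --- so there is nothing to compare against beyond noting that your argument is the expected one and is complete (including the correct reading of the statement's typos: $E'=E\restriction X'$ and OQ ``on $X'$'').
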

\begin{definition}
  Let $X$ be a non-empty interpretable set.  An interpretable subset
  $Y \subseteq X$ is a \emph{large subset} if $\dim(Y \setminus X) <
  \dim(X)$.
\end{definition}
This terminology follows \cite[Definition~1.11]{Pillay-G-in-o} and
\cite[Definition~8.3]{interpretable-groups}.  In \cite{wj-o-minimal},
large sets were called ``full sets.''
\begin{remark} \phantomsection \label{large-remark}
  \begin{enumerate}
  \item If $X$ is a large subset of $Y$, then $\dim(X) = \dim(Y)$.
  \item \label{lr2} If $X$ is a large subset of $Y$, and $Y$ is a large subset of
    $Z$, then $X$ is a large subset of $Z$.
  \item If $\dim(X) = 0$, the only large subset is $X$ itself.
  \end{enumerate}
\end{remark}
\begin{remark} \label{definable-to-manifold}
  If $X \subseteq \Mm^n$ is definable, then there is a large open
  subset $X' \subseteq X$ such that $X'$ is a definable manifold.  If
  $X$ is $M$-definable for a small model $M \preceq \Mm$, then we can
  take $X'$ to be $M$-definable.  If $k = \dim(X)$, then we can take
  $X'$ to be a $k$-dimensional manifold in the strong sense that $X'$
  is covered by finitely many open sets definably homeomorphic to open
  subsets of $\Mm^k$.

  This is obvious from cell decomposition results and dimension
  inequalities, but we give a proof for completeness.
\end{remark}
\begin{proof}
  Say that a non-empty set $C \subseteq \Mm^n$ is a \emph{t-cell}
  \cite[Definition~2.6]{p-minimal-cells} if there is a coordinate
  projection $\pi : \Mm^n \to \Mm^k$ such that $\pi(C)$ is open in
  $\Mm^k$, and $C \to \pi(C)$ is a homeomorphism.  By \cite[\S4,
    Theorem~$1.1'$]{vdDS}, we can write $X$ as a finite disjoint union
  of t-cells $\bigcup_{i = 1}^m C_i$.  If $X$ is $M$-definable, we can
  take the $C_i$ to be $M$-definable.  Without loss of generality,
  $C_1,\ldots,C_j$ have dimension $k$ and $C_{j+1},\ldots,C_m$ have
  dimension $< k$.  Let $D_i$ be the union of the cells other than
  $C_i$.  For $i \le j$, let
  \begin{equation*}
    U_i = X \setminus \overline{D_i} = C_i \setminus \overline{D_i} =
    C_i \setminus \partial D_i
  \end{equation*}
  and take $X' = \bigcup_{i = 1}^j U_i$.  Each set $U_i$ is an open
  subset of $X$, so $X'$ is an open subset.  For each $i \le j$, $U_i$
  is an open subset of $C_i$, which is definably homeomorphic to an
  open subset of $\Mm^k$, and therefore $U_i$ is also definably
  homeomorphic to an open subset of $\Mm^k$.  Therefore $X'$ is a
  $k$-dimensional manifold in the strong sense.  To see that $X'$ is a
  large subset of $X$, note that
  \begin{equation*}
    X \setminus X' = \bigcup_{i = 1}^j (C_i \cap \partial D_i) \cup \bigcup_{i = {j+1}}^m C_i.
  \end{equation*}
  For $i > j$, $\dim(C_i) < k$ by assumption.  The frontier dimension
  inequality \cite[Theorem~3.5]{p-minimal-cells} shows that $\dim(C_i
  \cap \partial D_i) \le \dim(\partial D_i) < \dim(D_i) \le k$.
  Therefore $\dim(X \setminus X') < k = \dim(X')$.
\end{proof}
\begin{definition}
  Let $D$ be an $A$-interpretable set.  An element $b \in D$ is
  \emph{generic in $D$ (over $A$)} if $\dim(b/A) = \dim(D)$.
\end{definition}
\begin{warning}
  If $D$ is 0-dimensional, then every element of $D$ is generic.
\end{warning}
\begin{lemma} \label{generic-eucl}
  If $X \subseteq \Mm^n$ is non-empty and $A$-definable, and $p$ is
  generic in $X$ (over $A$), then $X$ is locally Euclidean on an open
  neighborhood of $p$.
\end{lemma}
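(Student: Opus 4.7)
The plan is to use Remark~\ref{definable-to-manifold} to replace $X$ by a large open manifold subset and then show that genericity forces $p$ to lie in this subset.

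First, I would apply Remark~\ref{definable-to-manifold} to $X$, taking care to preserve $A$-definability. This gives an $A$-definable open subset $X' \subseteq X$ which is a $k$-dimensional definable manifold in the strong sense (covered by finitely many open sets definably homeomorphic to open subsets of $\Mm^k$), where $k = \dim(X)$, and such that $\dim(X \setminus X') < k$.

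Next, I would verify that $p \in X'$. Since $X \setminus X'$ is $A$-definable, if $p$ were in $X \setminus X'$ then by Fact~\ref{continuity-01}(\ref{uno}) we would have
\begin{equation*}
  \dim(p/A) \le \dim(X \setminus X') < k = \dim(X),
\end{equation*}
contradicting the assumption that $p$ is generic in $X$ over $A$. Hence $p \in X'$.

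Finally, $X'$ is open in $X$ and contains $p$, so $X'$ serves as the desired open neighborhood. Since $X'$ is a definable manifold, it is locally Euclidean, so $X$ is locally Euclidean on the open neighborhood $X'$ of $p$. There is no real obstacle here; the only subtlety is checking that Remark~\ref{definable-to-manifold} can be applied preserving the parameter set $A$, which is explicitly asserted in that remark.
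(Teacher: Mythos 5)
Your overall strategy is the same as the paper's, but there is a genuine gap at the one step you dismiss as a non-issue. Remark~\ref{definable-to-manifold} asserts that $X'$ can be taken $M$-definable when $X$ is $M$-definable \emph{for a small model} $M \preceq \Mm$; it does not assert that $X'$ can be taken $A$-definable for an arbitrary parameter set $A$. In this lemma $A$ is just a small set of parameters, and in the place where the lemma is actually used (Proposition~\ref{mostly-euclidean}) the base is a set $B$ containing imaginaries. The construction of $X'$ goes through a choice of t-cell decomposition of $X$, which is not canonical and is only guaranteed to be definable over real parameters defining $X$ --- not over an imaginary code or an arbitrary base $A$. So "$X \setminus X'$ is $A$-definable" is unjustified, and without it the inequality $\dim(p/A) \le \dim(X \setminus X')$ from Fact~\ref{continuity-01} does not apply. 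Note also that simply enlarging $A$ to a small model $M \supseteq A$ does not work, since $p$ need not remain generic over $M$.

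The paper's proof repairs exactly this point: it takes $X'$ defined over some set $B$ (with no definability constraint relative to $A$), then moves $B$ and $X'$ by an automorphism over $A$ so that $p \dimind_A B$ (using the extension property, Proposition~\ref{extension-2}). Then $\dim(p/B) = \dim(p/A) = \dim(X) > \dim(X \setminus X')$, so $p$ cannot lie in the $B$-definable set $X \setminus X'$, and $X'$ is the desired neighborhood. If you insert this independence step in place of your claim that $X'$ is $A$-definable, your argument is complete.
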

\begin{proof}
  By Remark~\ref{definable-to-manifold} there is a large open subset $X'
  \subseteq X$ that is a definable manifold.  Let $B$ be a set of
  parameters defining $X'$.  Moving $B$ and $X'$ by an automorphism
  over $A$, we may assume $p \dimind_A B$.  Then $\dim(p/B) =
  \dim(p/A) = \dim(X) > \dim(X \setminus X')$, and so $p$ is not in
  the $B$-definable set $X \setminus X'$.  Then $p \in X'$, and $X'$
  is the desired open neighborhood of $p$.
\end{proof}

The following lemma, which parallels \cite[Lemma~3.15]{wj-o-minimal},
collects some useful tools.
\begin{lemma} \label{tricks}
  Let $X \subseteq \Mm^n$ be $A$-definable, endowed with the subspace
  topology.
  \begin{enumerate}
  \item \label{tr1} Let $D$ be a definable subset of $X$.  Let $\partial D$ be the
    frontier of $D$, within the topological space $X$.  Then
    $\dim(\partial D) < \dim(D)$.
  \item \label{tr2} Suppose $P \subseteq X$ is definable or $\vee$-definable over
    $A$, and suppose $P$ contains every element that is generic in
    $X$.  Then there is an $A$-definable large open subset $X'
    \subseteq X$ such that $X' \subseteq P$.
  \item \label{tr3} Suppose $b \in X$ and $U$ is a neighborhood of $b$ in the
    topological space $X$.  Let $S \subseteq \Mm^\eq$ be a small set
    of parameters.  Then there is a smaller open neighborhood $b \in U'
    \subseteq U$ such that $\ulcorner U' \urcorner \dimind_A bS$.
  \end{enumerate}
\end{lemma}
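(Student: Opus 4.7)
My plan is to address parts (1), (2), and (3) in order, with (2) and (3) building on (1).

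For (1), since $D \subseteq X$ and $X$ carries the subspace topology from $\Mm^n$, the closure of $D$ inside $X$ is $\overline{D} \cap X$, where $\overline{D}$ denotes the closure in $\Mm^n$. Hence the frontier in $X$ satisfies $\partial_X D = (\overline{D} \cap X) \setminus D \subseteq \overline{D} \setminus D = \partial_{\Mm^n} D$, and the frontier dimension inequality for definable subsets of $\Mm^n$ \cite[Theorem~3.5]{p-minimal-cells} gives the conclusion.

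For (2), I use a compactness argument. Writing $P = \bigcup_j P_j$ as a union of $A$-definable sets, I form the partial $A$-type asserting that $x \in X$, that $\neg P_j(x)$ holds for every $j$, and that $\neg D(x)$ holds for every $A$-definable subset $D \subseteq X$ of dimension strictly less than $\dim X$. A realization would be a generic element of $X$ over $A$ lying outside $P$, contradicting the hypothesis. Compactness therefore yields finitely many such $D_1, \ldots, D_\ell$ with $X \setminus (D_1 \cup \cdots \cup D_\ell) \subseteq P$. Setting $D = D_1 \cup \cdots \cup D_\ell$, its closure $\overline{D}$ in $X$ is $A$-definable (closures of definable subsets of $\Mm^n$ are definable in $\pCF$) and has dimension $\le \dim D < \dim X$ by part~(1), so $X' := X \setminus \overline{D}$ is the required $A$-definable large open subset of $P$.

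For (3), the strategy is to select a basic open neighborhood of $b$ inside $U$ whose code is dimensionally independent of $bS$ over $A$. I use the $\emptyset$-interpretable basis $\{B(c, \gamma) : (c, \gamma) \in \Mm^n \times \Gamma\}$ of balls under the max norm, setting $Y = \Mm^n \times \Gamma$ (so $\dim Y = n + 1$). Let $Z = \{(c, \gamma) \in Y : b \in B(c, \gamma),~ B(c, \gamma) \cap X \subseteq U\}$, which is $A b \ulcorner U \urcorner$-interpretable. Picking some $\gamma_0$ with $B(b, \gamma_0) \cap X \subseteq U$, the subset $\{(c, \gamma) : \gamma \ge \gamma_0,~ c \in B(b, \gamma)\} \subseteq Z$ has dimension $n + 1$ by Proposition~\ref{fibers}, so $\dim Z = \dim Y = n + 1$. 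By the extension property of dimension applied over $A b \ulcorner U\urcorner$, I find $(c, \gamma) \in Z$ with $\dim((c, \gamma) / A b \ulcorner U\urcorner S) = n + 1$; then base monotonicity together with the bound $\dim((c, \gamma) / A) \le \dim Y = n + 1$ forces $\dim((c, \gamma)/A) = \dim((c, \gamma)/AbS) = n + 1$, that is, $(c, \gamma) \dimind_A bS$. Proposition~\ref{ind-2}(\ref{silly-mon}) then gives $\ulcorner B(c, \gamma) \cap X \urcorner \dimind_A bS$, and $U' := B(c, \gamma) \cap X$ works.

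The main obstacle is part (3): the dimension calculation $\dim Z = \dim Y$ is where the richness of the ball basis enters, and the chain of base-monotonicity inequalities pinning $\dim((c, \gamma)/A)$ and $\dim((c, \gamma)/AbS)$ to $n + 1$ is what converts the extension property into the desired independence. A secondary technicality is the definability of topological closures in $\pCF$, used in part~(2) to pass from $D$ to $\overline{D}$.
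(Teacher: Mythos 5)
Parts (1) and (2) are fine and match what the paper does: (1) is exactly the citation of the frontier inequality (your observation that the frontier within $X$ is contained in the frontier within $\Mm^n$ is a correct and worthwhile bridge), and (2) is the standard compactness argument that the paper defers to \cite[Lemma~3.15(2)]{wj-o-minimal}, using Fact~\ref{continuity-01} to identify ``generic over $A$'' with ``avoiding all $A$-definable subsets of lower dimension.''

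For part (3) you take a genuinely different, and unnecessarily heavy, route, and it contains a concrete error: $\dim(\Mm^n\times\Gamma)=n$, not $n+1$, because the value group $\Gamma$ is $0$-dimensional (the valuation map $\Mm^\times\to\Gamma$ has $1$-dimensional fibers, so $\dim\Gamma=0$; see Example~\ref{gamma}). Consequently your set $Z$ has dimension $n$, not $n+1$, and the same for the subset $\{(c,\gamma):\gamma\ge\gamma_0,\ c\in B(b,\gamma)\}$, whose fibers over the $0$-dimensional base $\{\gamma\ge\gamma_0\}$ are $n$-dimensional. The error is repairable: replacing $n+1$ by $n$ throughout, your genericity-plus-base-monotonicity chain still pins $\dim((c,\gamma)/A)=\dim((c,\gamma)/AbS)=n$ and yields the independence, so the argument survives. (You should also first replace $U$ by a definable subneighborhood, since otherwise $Z$ need not be interpretable.) The paper's proof is much shorter and worth knowing: the imaginary sort of \emph{codes} of balls in $\Mm^n$ is itself $0$-dimensional, so for \emph{any} ball $B$ one has $\dim(\ulcorner B\urcorner/\varnothing)=\dim(\ulcorner B\urcorner/AbS)=0$, whence $\ulcorner B\urcorner\dimind_A bS$ automatically; one then just takes $U'=X\cap B$ for any sufficiently small ball $B\ni b$ and uses $\ulcorner U'\urcorner\in\dcl^\eq(A\ulcorner B\urcorner)$. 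Your parametrization by centers $(c,\gamma)$ inflates the parameter space to dimension $n$ and forces the genericity argument that the paper's choice of coding avoids entirely.
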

\begin{proof}
  Part (\ref{tr1}) is \cite[Theorem~3.5]{p-minimal-cells}.  Part
  (\ref{tr2}) is proved analogously to
  \cite[Lemma~3.15(2)]{wj-o-minimal}, making use of
  Fact~\ref{continuity-01}(\ref{dos}).  Part (\ref{tr3}) requires a
  new argument.  Take $U' = X \cap B$, where $B$ is a sufficiently
  small ball in $\Mm^n$ around $b$.  A simple calculation shows that
  the set of balls in $\Mm^n$ is 0-dimensional.
  Therefore\[\dim(\ulcorner B \urcorner/\varnothing) = \dim(\ulcorner
  B \urcorner / A) = \dim(\ulcorner B \urcorner / AbS) = 0,\] implying
  $\ulcorner B \urcorner \dimind_A bS$.  But $\ulcorner U' \urcorner
  \in \dcl^\eq(A \ulcorner B \urcorner)$, and so $\ulcorner U'
  \urcorner \dimind_A bS$.
\end{proof}
The next three lemmas show that given a definable set $X$ and
definable equivalence relation $E$, we can improve the nature of $X/E$
by repeatedly replacing $X$ with a large open definable subset.
\begin{lemma} \label{step-1}
  Let $X \subseteq \Mm^n$ be $A$-definable and non-empty and let $E$
  be an $A$-definable equivalence relation on $X$.  Then there is an
  $A$-definable large open subset $X' \subseteq X$ such that the
  restriction $E \restriction X'$ is an OQ equivalence relation on
  $X'$.
\end{lemma}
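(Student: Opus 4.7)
The plan is to identify a definable ``bad set'' $F \subseteq X$, consisting of points where the $E$-saturation of some basic open neighborhood fails to be open, to show that $\dim(F) < k := \dim(X)$, and then to take $X' := X \setminus \overline{F}$. For the setup, let $\{B_a\}_{a \in Z}$ be the $A$-definable family of open balls in $\Mm^n$, so that $\{U_a\}_{a \in Z}$ with $U_a := B_a \cap X$ is an $A$-definable basis of open subsets of $X$. The calculation in the proof of Lemma~\ref{tricks}(\ref{tr3}) shows $\dim(Z) = 0$. Define
\begin{equation*}
  F := \{q \in X : \exists a \in Z, ~ q \in U_a^E \setminus \mathrm{int}_X(U_a^E)\},
\end{equation*}
an $A$-definable subset of $X$.

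To bound $\dim(F)$, note that for each $a$, any point in $U_a^E \setminus \mathrm{int}_X(U_a^E)$ lies in both $U_a^E$ and $\overline{X \setminus U_a^E}$, hence in the frontier $\partial(X \setminus U_a^E)$, which by Lemma~\ref{tricks}(\ref{tr1}) has dimension strictly less than $\dim(X \setminus U_a^E) \le k$. So every fiber of the projection $\{(a,q) \in Z \times X : q \in U_a^E \setminus \mathrm{int}_X(U_a^E)\} \to Z$ has dimension $< k$. Since $\dim(Z) = 0$, Proposition~\ref{fibers} yields $\dim(F) < k$, and another application of Lemma~\ref{tricks}(\ref{tr1}) gives $\dim(\overline{F}) < k$. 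Hence $X' := X \setminus \overline{F}$ is an $A$-definable large open subset of $X$.

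To verify that $E \restriction X'$ is OQ, I apply Remark~\ref{criterion-for-oer}: for any open $V \subseteq X'$ it suffices to show $V^E \cap X'$ is open in $X'$. Writing $V = \bigcup_i U_{a_i}$ as a union of basic opens, the monotonicity of interior gives $\mathrm{int}_X(U_{a_i}^E) \subseteq \mathrm{int}_X(V^E)$, from which one extracts $V^E \setminus \mathrm{int}_X(V^E) \subseteq F$. Since $X' \cap F = \varnothing$ by construction, every point $q \in V^E \cap X'$ lies in $\mathrm{int}_X(V^E)$, so has an open $X$-neighborhood contained in $V^E$; intersecting with $X'$ yields an open $X'$-neighborhood of $q$ in $V^E \cap X'$.

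The main obstacle is the dimension bound $\dim(F) < k$: \emph{a priori} $F$ is a union of $(k-1)$-dimensional sets indexed by $Z$, and this fiberwise bound only yields the desired global bound because the parameter space $Z$ of balls in $\Mm^n$ is $0$-dimensional in $\pCF^{\eq}$, a feature special to the $p$-adic setting.
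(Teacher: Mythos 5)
Your proof is correct, but it takes a genuinely different route from the paper's. The paper simply imports the o-minimal argument of \cite[Proposition~3.16]{wj-o-minimal}, replacing \th-independence with $\dimind$: one shows that criterion (\ref{cfo4}) of Remark~\ref{criterion-for-oer} holds at a generic pair $(p,p')$ with $p E p'$ after moving the relevant neighborhoods to be $\dimind$-independent from the data (via Lemma~\ref{tricks}(\ref{tr2}),(\ref{tr3})), and then extracts a large open $A$-definable subset from the $\vee$-definable good locus. Your argument instead isolates a single $A$-definable ``bad set'' $F$ and bounds its dimension directly by combining the frontier inequality with the fact that the index set of the basis of balls is $0$-dimensional; no independence calculus or generic points are needed, and the OQ verification at the end is a clean pointwise check. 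The trade-off is exactly the one you identify: the paper's route is uniform across the o-minimal and $p$-adic settings, whereas yours exploits the $0$-dimensionality of the space of balls in $\pCF^{\eq}$ — in an o-minimal structure the basic opens are parameterized by a $2n$-dimensional set, so the union of the fiberwise small frontiers need not be small and your dimension count breaks down. Within $\pCF$ (and, more generally, in $P$-minimal or visceral theories whose uniformity is indexed by a $0$-dimensional set), your proof is a simpler and fully valid substitute. Two small points worth making explicit: the fiber bound $\dim\bigl(U_a^E \setminus \mathrm{int}_X(U_a^E)\bigr) < \dim(X)$ should be read as vacuous when $X \setminus U_a^E = \varnothing$ (in which case $U_a^E = X$ is open and the fiber is empty), and the passage from the $(k-1)$-bound on the incidence set to $\dim(F) < k$ uses both parts of Proposition~\ref{fibers} (the fiber bound over $Z$ and then the surjection onto $F$), which you gloss over but which is routine.
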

\begin{proof}
  The proof of \cite[Proposition~3.16]{wj-o-minimal} works verbatim,
  replacing $\ind^{\mathrm{\th}}$ with $\dimind$.
\end{proof}
\begin{lemma} \label{step-2}
  Let $X \subseteq \Mm^n$ be $A$-definable and non-empty and let $E$
  be an $A$-definable OQ equivalence relation on $X$. Then there is an
  $A$-definable large open subset $X' \subseteq X$ such that $X'/E := X'/(E
  \restriction X')$ is Hausdorff.
\end{lemma}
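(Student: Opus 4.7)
The plan is to find a large open $A$-definable $X' \subseteq X$ such that $E \restriction X'$ is \emph{closed} in $X' \times X'$. This suffices for two reasons: $E \restriction X'$ remains OQ by Fact~\ref{open-to-subs}(\ref{ots1}), and for any OQ equivalence relation the quotient is Hausdorff iff the relation is closed. (For the nontrivial direction: if $E$ is closed and $\alpha \ne \beta$ in the quotient with representatives $a, b$, closedness gives opens $U \ni a, V \ni b$ with $(U \times V) \cap E = \varnothing$; openness of $q$ then makes $q(U), q(V)$ disjoint open neighborhoods of $\alpha, \beta$, since a common element would pull back to some $(u,v) \in U \times V$ with $u E v$.)

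Let $Z := \overline E \setminus E$ and $W := \pi_1(Z)$, both $A$-definable. Using continuity and openness of $q$, one checks $\overline E = (q \times q)^{-1}(\overline \Delta)$, so $Z = (q \times q)^{-1}(\overline \Delta \setminus \Delta)$ is $E \times E$-saturated; in particular $W$ is $E$-saturated and, by the symmetry of $Z$, so is each fiber $Z_a$. Once I show $\dim(W) < \dim(X)$, I am done: $\dim(\overline W) = \dim(W) < \dim(X)$ by Lemma~\ref{tricks}(\ref{tr1}), so $P := X \setminus \overline W$ is an open $A$-definable subset containing every generic element of $X$; Lemma~\ref{tricks}(\ref{tr2}) then produces a large open $A$-definable $X' \subseteq P$, and the symmetry of $Z$ together with $X' \cap W = \varnothing$ forces $(X' \times X') \cap Z = \varnothing$, so $\overline E \cap (X' \times X') = E \cap (X' \times X')$.

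The crux is $\dim(W) < \dim(X)$. Applying Lemma~\ref{tricks}(\ref{tr1}) to the $A$-definable set $E$ inside the ambient definable topological space $X \times X$ gives $\dim(Z) = \dim(\partial E) \le \dim(E) - 1$, and Proposition~\ref{fibers} applied to $\pi_1 \colon E \to X$ gives $\dim(E) = \dim(X) + d$, where $d$ is the generic dimension of an $E$-class. On the other side, pick $(a,b) \in Z$ generic over $A$: symmetry of $Z$ forces $\dim(a/A) = \dim(b/A) = \dim(W)$, so $b$ is generic in $W$ over $A$, whence $[b]_E$ attains the generic class dimension $d_W$ inside $W$. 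Since $Z_a$ is $E$-saturated and contains $[b]_E$, $\dim(Z_a) \ge d_W$, and Proposition~\ref{fibers} applied to $\pi_1 \colon Z \to W$ yields $\dim(Z) \ge \dim(W) + d_W$. Combining, $\dim(W) \le \dim(X) + d - d_W - 1$; together with the saturation inequality $\dim(W) = \dim(W/E) + d_W \le \dim(X/E) + d_W = \dim(X) - d + d_W$, one obtains $\dim(W) < \dim(X)$ in both cases $d_W < d$ and $d_W = d$.

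The main obstacle is the bookkeeping of class dimensions in the last paragraph, in particular justifying that the generic $b \in W$ realizes the maximal class dimension $d_W$ and that the generic class dimensions of $X$ and $W$ are well-defined invariants. Both facts follow from definability of dimension in families (Proposition~\ref{dim-definable-omega}), perhaps after stratifying $X$ and $W$ by $k \mapsto \{a : \dim([a]_E) = k\}$ and running the fiber estimate on each stratum separately; the topological content of the argument is entirely concentrated in the frontier inequality applied to $E \subseteq X \times X$ together with the $E \times E$-saturation of $Z$ that OQ-ness provides.
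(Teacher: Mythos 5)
Your reduction (find a large open $X'$ with $(X' \times X') \cap Z = \varnothing$, using that an OQ relation has Hausdorff quotient iff it is closed) is sound, but the claim on which everything rests --- $\dim(W) < \dim(X)$ for $W = \pi_1(\overline{E} \setminus E)$ --- is false for OQ equivalence relations. Take $X = \Mm^2$ and let $P_0 = \{y \in \Mm^\times : \val(y) \in 2\Gamma\}$, $P_1 = \Mm^\times \setminus P_0$; these are open, definable, and both accumulate at $0$. Let $(x,y) E (x',y')$ iff $x = x'$ and ($y = y' = 0$, or $y,y' \in P_0$, or $y,y' \in P_1$), so the classes are $\{x\} \times P_0$, $\{x\} \times P_1$, and $\{(x,0)\}$. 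This is OQ: the saturation of a box $U \times V$ is $U \times V'$ where $V'$ is a union of some of $P_0, P_1$ (open), or all of $\Mm$ when $0 \in V$. For every $y \ne 0$ one checks $((x,y),(x,0)) \in \overline{E} \setminus E$ (any box around $(x,0)$ meets both $P_0$ and $P_1$ in its second factor), so $W \supseteq \Mm \times \Mm^\times$ and $\dim(W) = 2 = \dim(X)$. The Lemma still holds here (take $X' = \Mm \times \Mm^\times$; the classes $\{x\}\times P_i$ then have disjoint saturated open neighborhoods $U \times P_0$, $U \times P_1$), but your construction would force $X' \cap W = \varnothing$, i.e.\ $X' = \varnothing$. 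The example also shows exactly which steps of your dimension count fail: the generic point of $Z$ is $((x,y),(x,0))$ with $\dim(x,y/A) = 2$ but $\dim((x,0)/A) = 1$, so symmetry of $Z$ does \emph{not} force the two coordinates of a generic point to have equal dimension (compare the symmetric set $(\{0\}\times\Mm)\cup(\Mm\times\{0\})$); the fiber $Z_{(x,y)} = \{(x,0)\}$ is a saturated set containing only a $0$-dimensional class, so $\dim(Z_a) \ge d_W$ fails even for generic $a$; and here $\dim(Z) = 2 < \dim(W) + d_W = 3$. Relatedly, ``the generic class dimension'' of $X$ or $W$ need not be well-defined (two strata $X_k$ can both have full dimension), and $\dim(E) = \dim(X) + d$ can fail when class dimensions vary.

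The moral is that non-separability is not confined to a small set of \emph{points}: a generic point can be inseparable from a non-generic one. What is true, and what the argument must establish, is that a pair $(a,b)$ with $\neg(aEb)$ in which \emph{both} coordinates are generic over $A$ (no independence needed) is separable: genericity of $a$ keeps it off the $A$-definable set $\bigcup_c \partial([c]_E)$, which has dimension $< \dim(X)$ by a per-stratum fiber count, so some basic $U \ni a$ misses $[b]_E$ and hence $b \notin U^E$; shrinking $U$ via Lemma~\ref{tricks}(\ref{tr3}) so that $\ulcorner U \urcorner \dimind_A ab$, genericity of $b$ keeps it off $\partial(U^E)$, whence $b \notin \overline{U^E}$ and $(a,b) \notin \overline{E}$. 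Thus $Z$ misses $T \times T$ where $T$ is the (type-definable) set of generics, and compactness plus the frontier inequality extract an $A$-definable large open $X'$ with $Z \cap (X' \times X') = \varnothing$. Your global dimension count on $\partial E$ and its projection cannot be repaired to replace this pointwise argument.
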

\begin{proof}
  The proof of \cite[Proposition~3.18]{wj-o-minimal} works verbtaim,
  replacing $\ind^{\mathrm{\th}}$ with $\dimind$.
\end{proof}
\begin{lemma} \label{step-3}
  Let $X \subseteq \Mm^n$ be $A$-definable and non-empty and let $E$
  be an $A$-definable OQ equivalence relation on $X$ such that $X/E$
  is Hausdorff.  Then there is an $A$-definable large open subset $X'
  \subseteq X$ such that $X'/E$ is locally Euclidean.
\end{lemma}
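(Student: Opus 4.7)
The plan is to follow the analogous o-minimal argument in \cite[Section~3]{wj-o-minimal}, replacing \th-independence by the dimensional independence $\dimind$ of Section~\ref{sec-dim} throughout. Fix a point $a$ generic in $X$ over $A$, and set $n := \dim(X)$ and $k := \dim(X/E)$. Since $a$ is generic, $\dim([a]_E/A) = k$, and additivity yields $\dim([a]_E) \ge \dim(a/A[a]_E) = n - k$. The core goal is to produce an open neighborhood of $[a]_E$ in $X/E$ that is interpretably homeomorphic to an open subset of $\Mm^k$. Once this is achieved, Lemma~\ref{tricks}(\ref{tr2}), applied to the ($\vee$-definable) set of points $b \in X$ whose $E$-class has a Euclidean open neighborhood in $X/E$, produces the desired large $A$-definable open subset $X' \subseteq X$: every generic point of $X$ has such a neighborhood by automorphism invariance, so the set is large.

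To build the local chart, first use $p$-adic cell decomposition \cite[\S4, Theorem~$1.1'$]{vdDS} together with Lemma~\ref{generic-eucl} to shrink $X$ to an open neighborhood of $a$ in $\Mm^n$. Apply cell decomposition once more to the $(n-k)$-dimensional set $[a]_E$ to obtain a coordinate projection $\pi : \Mm^n \to \Mm^k$ such that a neighborhood of $a$ in $[a]_E$ is contained in a single $\pi$-fiber. Then use Lemma~\ref{tricks}(\ref{tr3}) to pass to a sufficiently small open box $U \ni a$ whose code is independent from $[a]_E$ over $A$, and use Proposition~\ref{extension-2} to move by automorphisms to conjugate $E$-classes of nearby points. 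One checks in this way that for every $b \in U$ sufficiently close to $a$, the intersection $[b]_E \cap U$ is contained in a single $\pi$-fiber of dimension $n-k$, and that distinct nearby $E$-classes correspond to distinct $\pi$-fibers. Because $X \to X/E$ is open (the OQ hypothesis), this identifies an open neighborhood of $[a]_E$ in $U/E$ with the open set $\pi(U) \subseteq \Mm^k$, supplying the chart.

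The main obstacle is the geometric step in the previous paragraph: arranging that $\pi$ simultaneously trivialises $[a]_E$ \emph{and} the nearby $E$-classes in a coherent way, so that it descends to a local homeomorphism on $X/E$ rather than merely on $[a]_E$. In the o-minimal case this comes almost for free from generic triviality of definable families of cells; in the $p$-adic setting it has to be extracted from a careful combination of cell decomposition applied to the family of $E$-classes, independence-based shrinking via Lemma~\ref{tricks}(\ref{tr3}) and Proposition~\ref{extension-2}, and openness of the quotient map to translate dimensional conditions into topological ones, exactly mirroring the corresponding step in the o-minimal proof.
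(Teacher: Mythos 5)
There is a genuine gap, and it sits exactly where you flag ``the main obstacle'': the chart construction is not actually carried out, and the specific mechanism you propose does not work. You want a coordinate projection $\pi : \Mm^n \to \Mm^k$ such that a neighborhood of $a$ in $[a]_E$, and then each nearby class, is contained in a single $\pi$-fiber. Already for the single class $[a]_E$ this fails: a t-cell of dimension $n-k$ is the graph of a continuous definable function over an open subset of $\Mm^{n-k}$, and that function need not be locally constant, so the cell need not sit inside any coordinate fiber. Concretely, take $X = \Mm^2$ and let $E$ have classes $\{(x,y) : y - x^2 = c\}$; this $E$ is OQ with Hausdorff (indeed already locally Euclidean) quotient, yet no coordinate projection $\Mm^2 \to \Mm^1$ has a fiber containing a neighborhood of any point of any class, so no shrinking of $X$ makes your $\pi$ exist. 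The correct local chart here is $(x,y) \mapsto y - x^2$, a definable function that is not a coordinate projection. So the step you defer to ``a careful combination of cell decomposition, independence-based shrinking, and openness of the quotient map'' is not a routine verification: the argument has to produce a definable transversal/section through $a$ (or a definable chart function) and prove it is a homeomorphism onto an open neighborhood of $[a]_E$ in $X/E$. That last point is where definable compactness of closed bounded sets (Fact~\ref{dc-fact-pcf}) enters the transferred o-minimal proof --- a continuous injection from a definably compact set into a Hausdorff space is a closed embedding --- and your sketch never invokes it, which is a further sign that the essential mechanism is missing.

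Two smaller points. First, your claim that $\dim([a]_E/A) = k := \dim(X/E)$ for $a$ generic in $X$ is false in general: a generic point of $X$ need not map to a generic point of $X/E$ (e.g.\ collapse each plane $\{x\} \times \Mm^2 \subseteq \Mm^3$ with $x \neq 0$ to a point and leave $\{0\} \times \Mm^2$ alone; then $\dim(X/E) = 2$ but the class of a generic point has code of dimension $1$). This is harmless --- set $k := \dim([a]_E/A)$ and recall that local Euclideanity allows the dimension to vary from point to point --- but it should be fixed. Second, your outer scaffolding (work at a generic $a$, build a chart near $[a]_E$, then apply Lemma~\ref{tricks}(\ref{tr2}) to the $\vee$-definable locally Euclidean locus to extract a large $A$-definable open subset) is the right shape and matches the argument being transferred; it is only the core of the proof, the construction and verification of the chart, that remains unproved.
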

\begin{proof}
  The proof of \cite[Proposition~3.20]{wj-o-minimal} works verbtaim,
  replacing $\ind^{\mathrm{\th}}$ with $\dimind$.  Definable
  compactness continues to work properly thanks to
  Fact~\ref{dc-fact-pcf}.  The use of the $\dcl(-)$ pregeometry on the
  home sort does not present any problems because of
  Fact~\ref{gagelman-exchange} and the fact that $\acl^\eq(A) \cap \Mm
  = \dcl^\eq(A) \cap \Mm$ for $A \subseteq \Mm^\eq$ in $\pCF$.
\end{proof}
\begin{theorem} \label{thm-pi}
  Let $X \subseteq \Mm^n$ be a non-empty definable set, and $E$ be a
  definable equivalence relation on $X$.  There is a large open subset
  $X' \subseteq X$ such that
  \begin{itemize}
  \item $X'$ is a definable manifold
  \item $E \restriction X'$ is an OQ equivalence relation on $X'$
  \item $X'/E$ is Hausdorff
  \item $X'/E$ is locally Euclidean.
  \end{itemize}
  If $X$ and $E$ are $M$-definable for some small model $M \preceq
  \Mm$, then we can take $X'$ to be $M$-definable.
\end{theorem}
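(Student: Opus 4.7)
The plan is to iteratively apply Remark~\ref{definable-to-manifold} and Lemmas~\ref{step-1}, \ref{step-2}, \ref{step-3} in sequence, shrinking $X$ to a smaller large open definable subset at each stage, and verify that the good properties established at earlier stages persist under further shrinking.

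First I would apply Remark~\ref{definable-to-manifold} to get an $M$-definable large open subset $X_1 \subseteq X$ which is a definable manifold. Then I would apply Lemma~\ref{step-1} to $X_1$ and the restricted equivalence relation $E \restriction X_1$ to obtain a large open $X_2 \subseteq X_1$ such that $E \restriction X_2$ is OQ on $X_2$. Next, Lemma~\ref{step-2} applied to $X_2$ and $E \restriction X_2$ produces a large open $X_3 \subseteq X_2$ with $X_3/E$ Hausdorff. Finally, Lemma~\ref{step-3} applied to $X_3$ yields a large open $X_4 \subseteq X_3$ with $X_4/E$ locally Euclidean. Set $X' := X_4$. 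Transitivity of largeness (Remark~\ref{large-remark}(\ref{lr2})) ensures $X'$ is large in $X$, and at each step the lemmas preserve $M$-definability.

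The key point is to verify that all four desired properties hold simultaneously for $X'$, not just the most recently established one. Being a definable manifold is preserved under passage to open subsets, so $X_2, X_3, X_4$ all remain manifolds. For the OQ property, Fact~\ref{open-to-subs}(\ref{ots1}) says OQ is inherited by open restrictions, so $E \restriction X_3$ and $E \restriction X_4$ remain OQ. For Hausdorffness, the second clause of Fact~\ref{open-to-subs} says that $X_4/E$ embeds as an open subspace of $X_3/E$, and open subspaces of Hausdorff spaces are Hausdorff; hence $X_4/E$ is Hausdorff. Local Euclideanity of $X_4/E$ is the content of the last step.

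The main obstacle is bookkeeping rather than a genuinely hard step: one must keep track of which property survives which kind of shrinking. The essential inputs are Fact~\ref{open-to-subs} (which simultaneously handles preservation of OQ and of Hausdorffness, via the open embedding $X'/E \hookrightarrow X/E$) and the transitivity of the ``large subset'' relation. The $M$-definable strengthening poses no separate difficulty because each of Remark~\ref{definable-to-manifold} and Lemmas~\ref{step-1}--\ref{step-3} is stated with parameters in $A$, and we apply them with $A$ consisting of the parameters defining the previous stage, which by induction lie in $M$.
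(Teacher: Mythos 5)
Your proposal is correct and matches the paper's proof essentially step for step: the same sequence (Remark~\ref{definable-to-manifold}, then Lemmas~\ref{step-1}--\ref{step-3}), with Fact~\ref{open-to-subs} used to propagate the OQ and Hausdorff properties through the later shrinkings and Remark~\ref{large-remark}(\ref{lr2}) for transitivity of largeness. The bookkeeping you describe is exactly what the paper does.
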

\begin{proof}
  The proof is similar to \cite[Theorem~3.14]{wj-o-minimal}.  Take a
  small model $M$ defining $X$ and $E$.  First apply
  Remark~\ref{definable-to-manifold} to obtain an $M$-definable large
  open subset $X_1 \subseteq X$ such that $X_1$ is a definable
  manifold.  Let $E_1 = E \restriction X_1$.  Then apply
  Lemma~\ref{step-1} to get an $M$-definable large open subset $X_2
  \subseteq X_1$ such that if $E_2 = E \restriction X_2$, then $E_2$
  is an OQ equivalence relation on $X_2$.  Then apply
  Lemma~\ref{step-2} to get an $M$-definable large open subset $X_3
  \subseteq X_2$ such that if $E_3 = E \restriction X_3$, then
  $X_3/E_3$ is Hausdorff.  (The relation $E_3$ is an OQ equivalence
  relation on $X_3$ by Fact~\ref{open-to-subs}(\ref{ots1}).)  Then apply
  Lemma~\ref{step-3} to get an $M$-definable large open subset $X_4
  \subseteq X_3$ such that if $E_4 = E \restriction X_4$, then
  $X_4/E_4$ is locally Euclidean.  By Fact~\ref{open-to-subs}, $E_4$
  is an OQ equivalence relation on $X_4$, and the quotient space
  $X_4/E_4$ is an open subspace of $X_3/E_3$.  In particular,
  $X_4/E_4$ is Hausdorff.  By Remark~\ref{large-remark}(\ref{lr2}), $X_4$ is a
  large open subset of $X$.  Thus $X_4$ is a definable manifold.  Take
  $X' = X_4$.
\end{proof}
\begin{theorem} \label{construction-1}
  Let $Y$ be an interpretable set.  Then $Y$ admits at least one
  strongly admissible topology.  If $Y$ is $M$-interpretable for a
  small model $M \preceq \Mm$, then $Y$ admits an $M$-interpretable
  strongly admissible topology.
\end{theorem}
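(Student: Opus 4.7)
The plan is to fix a presentation $Y = X/E$ for some definable $X \subseteq \Mm^n$ and definable equivalence relation $E$ on $X$, and induct on $\dim(X)$. If $Y$ is $M$-interpretable then such $X, E$ can be chosen $M$-definable, and I will verify the induction preserves this parameter, which handles the second clause in parallel with the first.

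For the base case $\dim(X) \le 0$, the set $X$ is finite by Proposition~\ref{dim-of-sets}, hence $Y$ is finite, and the discrete topology on $Y$ is strongly admissible by Remark~\ref{finite-strong}.

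For the inductive step, with $\dim(X) = n \ge 1$, apply Theorem~\ref{thm-pi} to obtain an $M$-definable large open $X_1 \subseteq X$ such that $X_1$ is a definable manifold, $E \restriction X_1$ is an OQ equivalence relation, and $Y_1 := X_1 / E$ is Hausdorff and locally Euclidean. The surjective open map $X_1 \to Y_1$ then witnesses that $Y_1$ is manifold dominated, hence strongly admissible. Next set $X_2 := X \setminus (X_1)^E$, the complement of the $E$-saturation of $X_1$. This is $M$-definable, and because $X_1 \subseteq (X_1)^E$ and $X_1$ is large in $X$, we get $\dim(X_2) \le \dim(X \setminus X_1) < \dim(X)$. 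Every $E$-class either meets $X_1$ (in which case it lies in $(X_1)^E$) or avoids $X_1$ entirely (in which case it lies in $X_2$); this dichotomy yields $Y = Y_1 \sqcup Y_2$ as sets, where $Y_2 := X_2 / (E \restriction X_2)$. By the induction hypothesis applied to $(X_2, E \restriction X_2)$, the set $Y_2$ carries an $M$-interpretable strongly admissible topology.

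To finish, topologize $Y$ as the disjoint union $Y_1 \sqcup Y_2$, declaring both pieces clopen. By Proposition~\ref{products-etc}(\ref{pe6}) the class of strongly admissible spaces is closed under binary disjoint unions, so $Y$ inherits a strongly admissible topology, which is $M$-interpretable by construction. The main obstacle is ensuring the dimension strictly drops when passing from $X$ to $X_2$; this is exactly what the largeness clause of Theorem~\ref{thm-pi} provides, and without it the recursion would fail to terminate. Everything else is a straightforward bookkeeping exercise chaining together the closure properties from Section~\ref{closure-props} with the quotient machinery of Section~\ref{ss-construct}.
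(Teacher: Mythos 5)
Your proof is correct and follows essentially the same route as the paper: present $Y$ as $X/E$, induct on $\dim(X)$, apply Theorem~\ref{thm-pi} to peel off a strongly admissible open quotient $X_1/E$, pass to the complement of the $E$-saturation (which has strictly smaller dimension by largeness), and glue with the disjoint-union closure property. The only (harmless) differences are cosmetic: you terminate the induction at dimension $0$ via finiteness rather than at the empty set, and you make the $E$-saturation step explicit where the paper phrases it as taking the preimage of $Y \setminus Y_1$.
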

\begin{proof}
  The proof is similar to \cite[Proposition~4.2]{wj-o-minimal}.  Write
  $Y$ as $X/E$ for some $M$-definable set $X \subseteq \Mm^n$ and
  $M$-definable equivalence relation $E$.  Proceed by induction on
  $\dim(X)$.  If $\dim(X) = -\infty$, then $X$ and $Y$ are empty, and
  the unique topology on $Y$ is strongly admissible.  Suppose $\dim(X)
  \ge 0$, so $X$ and $Y$ are non-empty.  By Theorem~\ref{thm-pi},
  there is an $M$-definable large open subset $X' \subseteq X$ such
  that $X'$ is a definable manifold, and if $E' = E \restriction X'$,
  then $E'$ is an OQ equivalence relation on $X'$, and the quotient
  $X'/E'$ is Hausdorff and locally Euclidean.  Then the quotient
  topology on $X'/E'$ is interpretable by Fact~\ref{why-open}, and the
  map $X' \to X'/E'$ is a  surjective interpretable open
  map, because $E'$ is an OQ equivalence relation.  Therefore $X'
  \to X'/E'$ witnesses that $X'/E'$ (with the quotient topology) is
  strongly admissible.

  Let $Y' = X'/E'$, and let $Y'' = Y \setminus Y''$.  Then $Y$ is a
  disjoint union of $Y'$ and $Y''$.  Let $X''$ be the preimage of
  $Y''$ in $X$, and let $E'' = E \restriction X''$.  Then $Y'' =
  X''/E''$.  Moreover, $X'' \cap X' = \varnothing$, so $X'' \subseteq
  X \setminus X'$ and then $\dim(X'') < \dim(X)$ as $X'$ is large.  By
  induction, $Y''$ admits a strongly admissible topology.  So $Y'$ and
  $Y''$ both admit strongly admissible topologies.  The disjoint union
  topology on $Y$ is strongly admissible by
  Proposition~\ref{products-etc}.
\end{proof}

\subsection{Tameness in admissible topologies} \label{ss-tame}
In this section, we verify that the topological tameness theorems for definable sets
extend to admissible topological spaces.
\begin{lemma} \label{side-shrink}
  Let $X$ be a definably dominated interpretable topological space,
  interpretable over a small set of parameters $A$.  Let $S$ be a
  small set of parameters.  Let $b \in X$ be a point.  Let $U \ni b$
  be a neighborhood of $b$ in $X$.  Then there is a smaller
  neighborhood $b \in U' \subseteq U$ such that $\ulcorner U'
  \urcorner \dimind_A bS$.
\end{lemma}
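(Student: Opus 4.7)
My plan is to pull back the problem through the definable domination map, apply Lemma~\ref{tricks}(\ref{tr3}) on a definable set where it already exists, and transport the result back upstairs via $\pi$. The main subtlety is parameter bookkeeping: I need the parameters defining the pullback to be dimension-independent from $bS$ over $A$, so that the information produced downstairs survives the projection.

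Concretely, using definable domination, I fix an interpretable surjective open map $\pi : Y \to X$ with $Y \subseteq \Mm^n$ a definable set. Let $B \supseteq A$ be a small set of parameters defining $Y$ and $\pi$. By Proposition~\ref{extension-2}, after moving $\pi, Y, B$ by an automorphism of $\Mm$ fixing $A$ (which does not affect $X$, since $X$ is $A$-interpretable), I may assume $B \dimind_A bS$; the inclusion $A \subseteq B$ is preserved because the automorphism fixes $A$. Picking some $a \in Y$ with $\pi(a) = b$, the preimage $V_0 := \pi^{-1}(U)$ is an open neighborhood of $a$ in the $B$-definable topological space $Y$.

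Next, I apply Lemma~\ref{tricks}(\ref{tr3}) to the $B$-definable set $Y$, the point $a$, the neighborhood $V_0$, and the extra parameter set $\{b\} \cup S$, obtaining a smaller open neighborhood $a \in V \subseteq V_0$ with $\ulcorner V \urcorner \dimind_B abS$. I then set $U' := \pi(V)$. Since $\pi$ is open, $U'$ is open in $X$; clearly $b = \pi(a) \in U'$, and $U' \subseteq \pi(\pi^{-1}(U)) \subseteq U$.

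Finally, to verify $\ulcorner U'\urcorner \dimind_A bS$, I note that $U' = \pi(V)$ and $\pi$ is $B$-interpretable, so $\ulcorner U'\urcorner \in \dcl^\eq(B \cup \{\ulcorner V \urcorner\})$. By Proposition~\ref{ind-2}(\ref{silly-mon}), it suffices to check $B \cup \{\ulcorner V \urcorner\} \dimind_A bS$. Using the chain rule (Proposition~\ref{ind-2}(\ref{otkhi})) on the nest $A \subseteq B \subseteq B \cup \{\ulcorner V \urcorner\}$, this reduces to the two independences $bS \dimind_A B$ (arranged above) and $bS \dimind_B B \cup \{\ulcorner V \urcorner\}$ (a consequence of $\ulcorner V \urcorner \dimind_B abS$ by restriction). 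The main obstacle is keeping the parameter sets straight in this independence bookkeeping; once the initial independence $B \dimind_A bS$ is arranged, everything chains together routinely.
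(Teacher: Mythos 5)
Your proof is correct and follows essentially the same route as the paper's: pull back through the domination map, apply Lemma~\ref{tricks}(\ref{tr3}) to the definable set upstairs after arranging (via Proposition~\ref{extension-2}) that the parameters of the domination map are $\dimind$-independent from $bS$ over $A$, then push the small neighborhood forward and chain the two independences together. Your independence bookkeeping at the end is just a slightly more explicit version of the paper's appeal to ``left transitivity.''
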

This is similar to but slightly stronger than
\cite[Lemma~4.5]{wj-o-minimal}, and nearly the same proof works:
\begin{proof}
  Let $f : Y \to X$ be a map witnessing definable domination.  In
  particular, $Y$ is a definable set, and $f$ is a 
  surjective open map.  Let $A' \supseteq A$ be a small set over which
  $X, Y$, and $f$ are interpretable.  Moving $f, Y, A'$ by an
  automorphism over $A$, we may assume $A' \dimind_A pS$ by
  Proposition~\ref{extension-2}.  Take $\tilde{p} \in Y$ with
  $f(\tilde{p}) = p$.  Then $f^{-1}(U)$ is a neighborhood of
  $\tilde{p}$.  By Lemma~\ref{tricks}(\ref{tr3}), there is a smaller
  neighborhood $U''$ of $\tilde{p}$ in $Y$ such that $\ulcorner U''
  \urcorner \dimind_{A'} \tilde{p}S$.  Take $U' = f(U'')$; this is
  a neighborhood of $p$ because $f$ is an open map.  Then $\ulcorner
  U' \urcorner \dimind_{A'} pS$.  As $A' \dimind_A pS$, left
  transitivity gives $\ulcorner U' \urcorner \dimind_A pS$.
\end{proof}
\begin{definition}
  Let $X$ be an interpretable topological space and $p \in X$ be a
  point.  The \emph{local dimension} of $X$ at $p$, written
  $\dim_p(X)$, is the minimum of $\dim(U)$ as $U$ ranges over
  neighborhoods of $p$ in $X$.  More generally, if $D$ is an
  interpretable subset of $X$, then $\dim_p(D)$ is the minimum of
  $\dim(U \cap D)$ as $U$ ranges over neighborhoods of $p$ in $X$.
\end{definition}
\begin{proposition} \label{local-dim}
  Let $X$ be a definably dominated interpretable topological space.
  Then $\dim(X) = \max_{p \in X} \dim_p(X)$.
\end{proposition}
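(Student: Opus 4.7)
The plan is to prove both inequalities separately. The easy direction $\max_{p \in X} \dim_p(X) \le \dim(X)$ is immediate: for any $p \in X$, the space $X$ itself is a neighborhood of $p$, so $\dim_p(X) \le \dim(X)$, and taking the maximum over $p$ preserves this bound.

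For the reverse inequality, I would fix a small set $A \subseteq \Mm^\eq$ over which both $X$ and its interpretable basis of opens are defined, and choose a point $p \in X$ that is generic over $A$, meaning $\dim(p/A) = \dim(X)$ (such $p$ exists by Definition~\ref{set-def}). I claim $\dim_p(X) \ge \dim(X)$, which together with the easy direction gives equality and in particular the desired conclusion. Let $U \ni p$ be an arbitrary interpretable neighborhood. By Lemma~\ref{side-shrink} (applied with $S = \varnothing$), there is a smaller neighborhood $p \in U' \subseteq U$ such that $\ulcorner U' \urcorner \dimind_A p$. By Proposition~\ref{ind-2}(\ref{sami}), this independence is equivalent to
\begin{equation*}
\dim(p/A\ulcorner U'\urcorner) = \dim(p/A) = \dim(X).
\end{equation*}
Since $U'$ is interpretable over $A\ulcorner U'\urcorner$ and contains $p$, Definition~\ref{set-def} gives $\dim(U') \ge \dim(p/A\ulcorner U'\urcorner) = \dim(X)$. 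Monotonicity of dimension under inclusion (Proposition~\ref{fibers} applied to the inclusion $U' \hookrightarrow U$) then gives $\dim(U) \ge \dim(U') \ge \dim(X)$. As $U$ was an arbitrary interpretable neighborhood of $p$, we conclude $\dim_p(X) \ge \dim(X)$.

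The main obstacle, already handled by the earlier lemmas, is the need to shrink a given neighborhood of $p$ to one whose code is $\dim$-independent from $p$ over $A$; this is where the hypothesis that $X$ is definably dominated is used, since Lemma~\ref{side-shrink} relies on pulling back along the surjective open map from a definable set to $X$ and invoking the corresponding fact (Lemma~\ref{tricks}(\ref{tr3})) about small balls in $\Mm^n$. Without some such shrinking, a pathological interpretable neighborhood system could in principle conceal all the dimension of $X$ by always involving parameters that reduce $\dim(p/\cdot)$, which would break the argument.
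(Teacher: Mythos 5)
Your proof is correct and is essentially the argument the paper intends: the paper's proof defers to the o-minimal analogue, whose content is exactly your reconstruction --- pick $p$ generic in $X$ over defining parameters $A$, use Lemma~\ref{side-shrink} to shrink an arbitrary neighborhood $U$ to one whose code is $\dimind$-independent from $p$ over $A$, and conclude via Definition~\ref{set-def} and monotonicity that $\dim(U) \ge \dim(X)$. The only (trivial) nitpick is that passing from $\ulcorner U'\urcorner \dimind_A p$ to $\dim(p/A\ulcorner U'\urcorner)=\dim(p/A)$ uses symmetry (Proposition~\ref{ind-2}(1)) before Proposition~\ref{ind-2}(\ref{sami}).
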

\begin{proof}
  The proof of \cite[Proposition~4.6]{wj-o-minimal} goes through
  verbatim, replacing $\ind^{\mathrm{\th}}$ with $\dimind$,
  using Lemma~\ref{side-shrink}.
\end{proof}
\begin{corollary} \label{container}
  Let $X$ be an interpretable set with $n = \dim(X)$.  Then there is a
  non-empty open definable set $D \subseteq \Mm^n$ and an
  interpretable injection $D \hookrightarrow X$.
\end{corollary}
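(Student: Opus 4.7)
The plan is to apply Theorem~\ref{construction-1} to equip $X$ with a strongly admissible topology, then locate a point at which the local dimension and local Euclideanity together produce the required open subset of $\Mm^n$.

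First, I would invoke Theorem~\ref{construction-1} to choose some strongly admissible topology on $X$; this makes $X$ both definably dominated and locally Euclidean. Since $X$ is definably dominated, Proposition~\ref{local-dim} supplies some point $p \in X$ with $\dim_p(X) = n$. By local Euclideanity at $p$, there is a neighborhood $U_0 \ni p$ and an interpretable homeomorphism $\varphi \colon U_0 \to V_0$ with $V_0 \subseteq \Mm^m$ open, for some $m = m(p)$ which a priori could differ from $n$.

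The main task will be to force $m = n$. Using $\dim_p(X) = n$ again, I can pick a second neighborhood $U_1 \ni p$ with $\dim(U_1) = n$. Set $U = U_0 \cap U_1$ and $V = \varphi(U) \subseteq V_0$; then $U$ is still a neighborhood of $p$ and $V$ is a non-empty open subset of $\Mm^m$ (open because $\varphi$ is a homeomorphism onto the open set $V_0$). On the one hand $\dim(U) \le \dim(U_1) = n$, and on the other $\dim(U) \ge \dim_p(X) = n$, so $\dim(U) = n$. Since $\varphi$ restricts to an interpretable bijection $U \to V$, Proposition~\ref{fibers} gives $\dim(V) = n$; but any non-empty open subset of $\Mm^m$ has dimension $m$ (since it contains a product of open balls, which has dimension $m$ by additivity), so this forces $m = n$.

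Finally, I would take $D := V$ and let the interpretable injection $D \hookrightarrow X$ be $\varphi^{-1}$ followed by the inclusion $U \hookrightarrow X$. The only slightly delicate step is pinning down that the Euclidean dimension $m$ coincides with the intrinsic dimension $n$, and this is exactly what Proposition~\ref{local-dim} makes possible; once strong admissibility is invoked, the rest is a formal dimension comparison.
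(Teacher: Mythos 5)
Your proposal is correct and follows the paper's own argument exactly: equip $X$ with a strongly admissible topology via Theorem~\ref{construction-1}, use Proposition~\ref{local-dim} to find $p$ with $\dim_p(X) = n$, and extract a Euclidean neighborhood. The only difference is that you spell out the dimension comparison forcing $m = n$, which the paper dispatches with ``clearly $m = \dim_p(X)$''; your elaboration is sound.
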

\begin{proof}
  By Theorem~\ref{construction-1}, there is a strongly admissible
  topology on $X$.  By Proposition~\ref{local-dim} there is a point $p
  \in X$ with $\dim_p(X) = n$.  By local Euclideanity, $p$ has a
  neighborhood that is interpretably homeomorphic to a ball in $\Mm^m$
  for some $m$.  Clearly $m = \dim_p(X)$.
\end{proof}
\begin{proposition} \label{frontier-dimension}
  Let $X$ be an admissible interpretable topological space and $D$ be
  an interpretable subset.  Then $\dim \partial D < \dim D$, $\dim
  \bd(D) < \dim(X)$, and $\dim \overline{D} = \dim D$.
\end{proposition}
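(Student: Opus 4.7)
The three statements fit into a short chain, so the plan is to prove the frontier inequality $\dim \partial D < \dim D$ first and deduce the other two formally. For the third equality, since $\overline D = D \cup \partial D$ with the union disjoint, Proposition~\ref{dim-of-sets} gives $\dim \overline D = \max(\dim D, \dim \partial D) = \dim D$ once the frontier inequality is known. For the boundary inequality, I plan to write $\bd(D) = \overline D \cap \overline{X \setminus D}$ and check the elementary inclusion $\bd(D) \subseteq \partial D \cup \partial(X \setminus D)$; applying the frontier inequality to both $D$ and $X \setminus D$ then yields $\dim \partial D < \dim D \leq \dim X$ and symmetrically, so $\dim \bd(D) < \dim X$.

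For the main inequality, I plan to reduce via local definability of $X$ to the classical cell-decomposition-based inequality in $\Mm^n$. Assume $D$ is nonempty. The set $\partial D$ is an interpretable subspace of $X$, hence admissible by Proposition~\ref{subspaces}(\ref{ss3}). Applying Proposition~\ref{local-dim} to $\partial D$ yields a point $p \in \partial D$ with $\dim_p \partial D = \dim \partial D$. By local definability of $X$ at $p$, there is an interpretable open neighborhood $U$ of $p$ in $X$ and an interpretable homeomorphism $\phi : U \to V$ onto a definable subspace $V \subseteq \Mm^n$. Since $U$ is open in $X$, the standard topological identity gives that the frontier of $D \cap U$ inside the subspace $U$ coincides with $\partial D \cap U$. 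Setting $D' := \phi(D \cap U) \subseteq V$, the homeomorphism $\phi$ sends $\partial D \cap U$ bijectively onto the frontier of $D'$ inside $V$, which I denote $\partial D'$.

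To finish, I need $\dim \partial D' < \dim D'$ for the interpretable subset $D' \subseteq V \subseteq \Mm^n$. This is the main obstacle: Lemma~\ref{tricks}(\ref{tr1}) is stated only for definable subsets, but it applies here because every interpretable subset of $\Mm^n$ is in fact $L$-definable with real parameters. Indeed, each imaginary parameter is the class of a real tuple under some $\emptyset$-definable equivalence, and the interpretation of $L^\eq$ in $L$ converts the defining formula into an $L$-formula in those real representatives. With this in hand, Lemma~\ref{tricks}(\ref{tr1}) yields $\dim \partial D' < \dim D'$, and chaining
\[
\dim \partial D = \dim_p \partial D \leq \dim(\partial D \cap U) = \dim \partial D' < \dim D' \leq \dim D
\]
completes the proof. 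The argument also clarifies why \emph{admissible} is strengthened here to include local definability and not merely definable domination: without local definability we cannot realize $D$ locally as a subset of an ambient definable set in $\Mm^n$, and the cell-decomposition-based frontier inequality would then be out of reach.
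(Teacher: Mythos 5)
Your proof is correct and follows essentially the same route as the paper, which simply cites the o-minimal analogue ``using Proposition~\ref{local-dim}'': pick $p \in \partial D$ of maximal local dimension, use local definability to transport the problem into a definable subset of $\Mm^n$, apply the definable frontier inequality of Lemma~\ref{tricks}(\ref{tr1}), and derive the boundary and closure statements formally. The auxiliary points you flag (openness of the chart, interpretable subsets of $\Mm^n$ being definable, and why local definability rather than mere definable domination is needed) are all handled consistently with the paper.
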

\begin{proof}
  The proof of \cite[Proposition~4.7]{wj-o-minimal} goes through
  almost verbatim, using Proposition~\ref{local-dim}.
\end{proof}
\begin{remark} \label{why-definably-dominated}
  Proposition~\ref{frontier-dimension} fails if we replace
  ``admissible'' with the weaker condition ``definably dominated.''
  If $\Gamma_\infty$ is as in Example~\ref{half-compactification},
  then the subset $\Gamma \subseteq \Gamma_\infty$ has the same
  dimension (zero) as its frontier $\{+\infty\}$.
\end{remark}

\begin{proposition} \label{mostly-euclidean}
  Let $X$ be an admissible interpretable topological space.  Then
  there is a large open subset $X' \subseteq X$ such that $X'$ (with
  the subspace topology) is locally Euclidean.
\end{proposition}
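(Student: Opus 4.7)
The plan is to patch together locally Euclidean pieces obtained from the charts provided by local definability. By the definition of local definability together with the bounded-complexity requirement in a monster model, there is an interpretable family of charts $\{(V_a,\phi_a)\}_{a \in A}$ covering $X$, where each $V_a \subseteq X$ is an interpretable open subset and each $\phi_a \colon V_a \to D_a \subseteq \Mm^{n_a}$ is an interpretable homeomorphism onto a definable set.

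For each $a$, Remark~\ref{definable-to-manifold} produces a large open definable subset $D_a' \subseteq D_a$ that is a definable manifold, and the construction there (via cell decomposition) can be made uniform in $a$. Set $V_a' = \phi_a^{-1}(D_a')$ and
\begin{equation*}
  U = \bigcup_{a \in A} V_a'.
\end{equation*}
Then $U$ is an interpretable open subset of $X$ (as the union of an interpretable family of interpretable opens), and $U$ is locally Euclidean because each $V_a'$ is interpretably homeomorphic to the definable manifold $D_a'$.

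To show $U$ is large, fix $p \in X \setminus U$ and choose $a$ with $p \in V_a$. Then $V_a \cap (X \setminus U)$ is a neighborhood of $p$ in the subspace $X \setminus U$, and it is contained in $V_a \setminus V_a'$, so
\begin{equation*}
  \dim_p(X \setminus U) \le \dim(V_a \setminus V_a') < \dim(V_a) \le \dim(X)
\end{equation*}
by Remark~\ref{definable-to-manifold}. By Proposition~\ref{subspaces} the interpretable subspace $X \setminus U$ is admissible, in particular definably dominated, so Proposition~\ref{local-dim} gives
\begin{equation*}
  \dim(X \setminus U) = \max_{p \in X \setminus U} \dim_p(X \setminus U) < \dim(X),
\end{equation*}
as desired.

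The main subtlety is ensuring that $U$ is itself interpretable rather than just an arbitrary union of opens. This is where the uniform, bounded-complexity formulation of local definability is essential: it allows us to assemble the pointwise charts into a single interpretable family $\{(V_a,\phi_a)\}_{a \in A}$ and then invoke a definable-in-families version of Remark~\ref{definable-to-manifold} to obtain the interpretable refinement $\{V_a'\}_{a \in A}$. Note that an approach via Theorem~\ref{thm-pi} applied to a single definable domination $Y \to X$ does not work directly, since it only controls $\dim(Y \setminus Y')$ rather than the dimension of the relevant subset of $X$.
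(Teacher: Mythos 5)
Your argument is correct in outline but takes a genuinely different route from the paper, which never builds an interpretable atlas. The paper instead considers two $\vee$-definable loci: the locally Euclidean locus $X_{\mathrm{Eu}}$ and the union $Y$ of all $A$-interpretable subsets of dimension $<\dim(X)$. A point $b$ outside both would have $\dim(b/A)=\dim(X)$; Lemma~\ref{side-shrink} and the $\dimind$-calculus then produce a chart $f\colon U\to f(U)$ with $f(b)$ generic in $f(U)$, so Lemma~\ref{generic-eucl} makes $X$ locally Euclidean near $b$, a contradiction. Hence $X=X_{\mathrm{Eu}}\cup Y$, saturation gives $X=X_{\mathrm{Eu}}\cup Y_0$ with $\dim(Y_0)<\dim(X)$, and $X'=X\setminus\overline{Y_0}$ works by Proposition~\ref{frontier-dimension}. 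The genericity argument is precisely what lets the paper avoid all uniformity questions; your construction buys a more explicit $X'$ and a clean dimension count via Proposition~\ref{local-dim}, at the cost of two uniformity claims you assert but do not prove: (i) that local definability yields a single interpretable family of charts covering $X$, and (ii) that Remark~\ref{definable-to-manifold} can be performed uniformly in a definable family. Claim (i) is a routine compactness argument (for each pair of formulas, the set of points admitting a chart of that shape is interpretable, since openness and bicontinuity are first-order via the interpretable bases, and finitely many shapes cover $X$ by saturation), and the paper's remark on bounded complexity endorses it. Claim (ii) is the real debt: it requires either the uniform-in-parameters version of cell decomposition in $\pCF$, or a second compactness argument (largeness is definable in families by Fact~\ref{def-defable}, so finitely many ``recipes'' of coordinate-projection charts suffice across $a\in A$, and one may take $D_a'$ to be the union of all large open locally Euclidean subsets they produce). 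Both claims are true, so there is no fatal gap, but (ii) should be spelled out. Your closing observation that Theorem~\ref{thm-pi} applied to a domination $Y\to X$ cannot yield largeness in $X$ is correct, and is exactly why the paper's Lemma~\ref{some-open} only produces a nonempty open strongly admissible subset rather than a large one.
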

% ODOT: Can we also get X' to be strongly admissible?
% It's unclear, and might require fine control of the fiber dimensions
% On second thought, don't do this.
\begin{proof}
  The proof is similar to \cite[Proposition~4.7(3) and Lemma~4.9]{wj-o-minimal}.
  Let $A$ be a small set of parameters over which $X$ is
  interpretable.  Let $X_{\mathrm{Eu}}$ be the locally Euclidean locus
  of $X$, the set of points $p \in X$ such that some neighborhood of
  $p$ is interpretably homeomorphic to an open definable subset of
  $\Mm^n$ for some $n$.  It is easy to see that $X_{\mathrm{Eu}}$ is
  $\vee$-definable, a small union of $A$-interpretable subsets of $X$.
  
  Let $Y$ be the union of the $A$-interpretable subsets $D \subseteq
  X$ with $\dim(D) < \dim(Y)$.  Like $X_{\mathrm{Eu}}$, the set $Y$ is
  $\vee$-definable over $A$.
  \begin{description}
  \item[Case 1:] $X_{\mathrm{Eu}} \cup Y \subsetneq X$.  Take $b \in X
    \setminus (X_{\mathrm{Eu}} \cup Y)$.  Then $\dim(b/A) = \dim(X)$
    by Proposition~\ref{continuity-01}(\ref{dos}) and the fact that $b
    \notin Y$.  By local definability and Lemma~\ref{side-shrink},
    there is a neighborhood $U$ of $b$ such that $U$ is homeomorphic
    to a definable subset of $\Mm^n$ and $\ulcorner U \urcorner
    \dimind_A b$.  Let $f : U \to \Mm^n$ be an interpretable
    topological embedding.  Let $B \supseteq A$ be a small set of
    parameters defining $U$ and $f$.  Moving $B, f$ by an automorphism
    in $\Aut(\Mm/A \ulcorner U \urcorner)$, we may assume $B
    \dimind_{A \ulcorner U \urcorner} b$.  By left transitivity, $B
    \dimind_A b$.  Then
    \begin{equation*}
      \dim(X) = \dim(b/A) = \dim(b/B) = \dim(f(b)/B) \le \dim(f(U)) =
      \dim(U) \le \dim(X).
    \end{equation*}
    Therefore $f(b)$ is generic in the $B$-definable set $f(U)$.  By
    Lemma~\ref{generic-eucl}, $f(U)$ is locally Euclidean on an open
    neighborhood of $f(b)$.  Then $U$ is locally Euclidean on an open
    neighborhood of $b$, contradicting the fact that $b \notin
    X_{\mathrm{Eu}}$.
  \item[Case 2:] $X = X_{\mathrm{Eu}} \cup Y$.  Then saturation gives
    $X = X_{\mathrm{Eu}} \cup Y_0$ where $Y_0$ is a finite union of
    $A$-interpretable subsets of $X$ of lower dimension.  In
    particular, $\dim(Y_0) < \dim(X)$.  By
    Proposition~\ref{frontier-dimension}, the closure $\overline{Y_0}$
    has lower dimension than $X$.  Take $X' = X \setminus
    \overline{Y_0}$. \qedhere
  \end{description}
\end{proof}
\begin{corollary} \label{generically-n-manifold}
  If $X$ is an admissible interpretable topological space of dimension
  $n$, then there is a large open subset $X' \subseteq X$ such that
  $X'$ is everywhere locally homeomorphic to $\Mm^n$.
\end{corollary}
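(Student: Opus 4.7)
My plan is to apply Proposition~\ref{mostly-euclidean} to reduce to the locally Euclidean case, then stratify by local dimension and keep only the top-dimensional stratum. First I would invoke Proposition~\ref{mostly-euclidean} to obtain a large open subset $X'' \subseteq X$ such that $X''$ is locally Euclidean; by Proposition~\ref{subspaces}, $X''$ is itself admissible. For each $m \ge 0$, set $X'_m = \{p \in X'' : \dim_p(X'') = m\}$.

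Next I would verify that each $X'_m$ is an interpretable open subset of $X''$, with $\dim(X'_m) = m$ whenever it is nonempty. Openness follows from local Euclideanity: if $p \in X'_m$ and $f \colon U \to V$ is a chart with $V$ open in $\Mm^m$ and $p \in U$, then for any $q \in U$ every neighborhood of $q$ in $X''$ contains a smaller chart-neighborhood of $q$, which corresponds via $f$ to an open subset of $\Mm^m$ and so has dimension $m$; hence $\dim_q(X'') = m$ and $U \subseteq X'_m$. Interpretability uses that, in a monster model, the atlas witnessing local Euclideanity consists of charts of bounded complexity, so I may take it to be an interpretable family and stratify by chart dimension. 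Then $\dim(X'_m) = m$ by Proposition~\ref{local-dim} applied to $X'_m$ (which is admissible as an open interpretable subspace of $X''$), since every point of $X'_m$ has local dimension $m$ inside $X'_m$ as well.

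Since $\dim(X'') = \dim(X) = n$, this forces $X'_m = \varnothing$ for $m > n$, so $X'' = X'_0 \sqcup X'_1 \sqcup \cdots \sqcup X'_n$. I would then take $X' := X'_n$; this is open in $X''$, hence open in $X$, and the complement
\[
X \setminus X' = (X \setminus X'') \cup X'_0 \cup X'_1 \cup \cdots \cup X'_{n-1}
\]
has dimension strictly less than $n$ because each piece does, so $X'$ is large in $X$. Every point of $X'$ has, by construction, a neighborhood homeomorphic to an open subset of $\Mm^n$, so $X'$ is everywhere locally homeomorphic to $\Mm^n$. The main delicate step is the interpretability of the $X'_m$, which I would justify via the uniform bounded complexity of local Euclideanity witnesses in the monster model; everything else amounts to bookkeeping with the dimension theory of Section~\ref{sec-dim}.
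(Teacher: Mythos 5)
Your proof is correct and follows essentially the same route as the paper: pass to a locally Euclidean large open subset via Proposition~\ref{mostly-euclidean}, stratify by local dimension (open strata by local constancy), apply Proposition~\ref{local-dim} to identify the dimension of each stratum, and keep the top stratum. The only difference is that you spell out the interpretability of the strata, which the paper leaves implicit.
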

\begin{proof}
  By Proposition~\ref{mostly-euclidean}, we may pass to a large open
  subset and assume $X$ is locally Euclidean.  Then partition $X$ as
  $\bigcup_{i = 0}^n X_i$, where $X_i = \{p \in X : \dim_p(X) = i\}$.
  Local Euclideanity ensures that $p \mapsto \dim_p(X)$ is locally
  constant, so each $X_i$ is open.  By Proposition~\ref{local-dim},
  $\dim(X_i) = i$ for non-empty $X_i$.  Therefore $X' := X_n$ is non-empty,
  and its complement has dimension $< n$.
\end{proof}
A slightly different argument from Proposition~\ref{mostly-euclidean} gives
the following:
\begin{lemma} \label{some-open}
  Let $X$ be a non-empty definably dominated interpretable topological
  space.  Then $X$ has a non-empty open subset $X' \subseteq X$ that is
  strongly admissible.
\end{lemma}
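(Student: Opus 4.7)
The plan is to reduce to Theorem~\ref{thm-pi} via the quotient presentation of $X$ provided by definable domination. Let $f \colon Y \to X$ be an interpretable surjective open map witnessing definable domination, with $Y \subseteq \Mm^n$ a definable set carrying the subspace topology. Writing $X$ set-theoretically as $D/E_X$ for some definable $D$ and definable equivalence relation $E_X$, the interpretable map $f$ lifts to a definable function $\tilde f \colon Y \to D$, so the fiber equivalence relation $E := \{(y,y') \in Y^2 : f(y) = f(y')\}$ is itself \emph{definable}, since $y \mathrel{E} y' \iff \tilde f(y) \mathrel{E_X} \tilde f(y')$. Moreover, because $f$ is continuous, surjective, and open, it is a topological quotient map, so $X$ is homeomorphic to $Y/E$ with the quotient topology; in particular $E$ is an OQ equivalence relation on $Y$.

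Next, I would apply Theorem~\ref{thm-pi} to the pair $(Y,E)$ to obtain a large open subset $Y' \subseteq Y$ such that $Y'$ is a definable manifold, $E' := E \restriction Y'$ is an OQ equivalence relation on $Y'$, and the quotient $Y'/E'$ is both Hausdorff and locally Euclidean. Since $Y' \subseteq Y$ is open and $E$ is OQ on $Y$, Fact~\ref{open-to-subs} shows that the induced map $Y'/E' \to Y/E \cong X$ is an open embedding. I may therefore regard $X' := Y'/E'$ as an open subspace of $X$; it is non-empty because $X$ is non-empty, hence $Y$ is non-empty, and $Y'$ is large in $Y$.

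To finish, I would observe that $Y' \to X'$ is an interpretable surjective open map (as $E'$ is OQ) and that $Y'$ is a definable manifold, so $X'$ is manifold dominated; together with the Hausdorff and locally Euclidean properties already arranged by Theorem~\ref{thm-pi}, this gives strong admissibility in the sense of Definition~\ref{adm-def}. The only genuinely subtle point is confirming that the kernel relation $E$ is definable (rather than merely interpretable), as required by the hypotheses of Theorem~\ref{thm-pi}; but this is immediate from the explicit description $y \mathrel{E} y' \iff \tilde f(y) \mathrel{E_X} \tilde f(y')$ in terms of the definable data $\tilde f$ and $E_X$, so no real obstacle remains.
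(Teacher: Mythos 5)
Your argument is correct and is essentially the paper's own proof: identify $X$ with $Y/E$ for the kernel relation of the dominating map (using that a continuous surjective open map is a quotient map), apply Theorem~\ref{thm-pi} to $(Y,E)$, and use Fact~\ref{open-to-subs} to recognize $X' := Y'/E'$ as a non-empty strongly admissible open subspace. The only cosmetic difference is your detour through a definable lift $\tilde f$ to justify definability of $E$; this is harmless in $\pCF$ (which has definable Skolem functions) but also unnecessary, since the kernel of an interpretable map from a definable set is automatically a definable subset of $Y \times Y$.
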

\begin{proof}
  Take $Y \subseteq \Mm^n$ definable and $f : Y \to X$ an
  interpretable  surjective open map.  Let $E$ be the kernel
  equivalence relation $\{(x,y) \in Y^2 : f(x) = f(y)\}$.  Then we can
  identify $X$ with $Y/E$ on the level of sets.  Moreover, $X$ is
  homeomorphic to $Y/E$ with the quotient topology:
  \begin{itemize}
  \item If $U \subseteq X$ is open, then $f^{-1}(U)$ is open because
    $f$ is continuous.
  \item If $f^{-1}(U)$ is open then $U = f(f^{-1}(U))$ is open because
    $f$ is a surjective open map.
  %% \item If $U \subseteq X$ is \emph{not} open, then $f^{-1}(U)$ is not open
  %%   because $f$ is an open map and $U = f(f^{-1}(U))$ by surjectivity.
  \end{itemize}
  Thus we may identify $X$ with the quotient topology $Y/E$.  The fact
  that $Y \to Y/E$ is open means that $E$ is an OQ equivalence
  relation.

  By Theorem~\ref{thm-pi}, there is a large (hence non-empty) open
  subset $Y' \subseteq Y$ such that $Y'$ is a definable manifold, and
  $Y'/E$ is locally Euclidean.  By Fact~\ref{open-to-subs}, $Y'/E$ is
  an open subspace of $Y/E = X$, and $Y' \to Y'/E$ is an open map
  (i.e., $E \restriction Y'$ is an OQ equivalence relation).  Then $X'
  := Y'/E$ is strongly admissible.
\end{proof}
\begin{proposition} \label{gen-con}
  Let $X, Y$ be admissible topological spaces.  Let $f : X \to Y$ be
  interpretable.
  \begin{enumerate}
  \item There is a large open subset $X' \subseteq X$ on which $f$ is
    continuous.
  \item We can write $X$ as a finite disjoint union of locally closed
    interpretable subsets on which $f$ is continuous.
  \end{enumerate}
\end{proposition}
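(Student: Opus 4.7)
The plan is to prove (1) by a generic-point argument and then iterate to get (2).

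For (1), fix a small $A \subseteq \Mm^{\mathrm{eq}}$ over which $X$, $Y$, and $f$ are interpretable, and set $n = \dim(X)$. Let $C \subseteq X$ be the set of points at which $f$ is continuous. Since both spaces have interpretable bases, continuity at a point is a first-order condition on $p$, so $C$ is interpretable; it is also open since continuity is a local condition. It suffices to show $\dim(X \setminus C) < n$: Proposition~\ref{frontier-dimension} then gives $\dim(\overline{X \setminus C}) < n$, so $X' := X \setminus \overline{X \setminus C}$ is a large open interpretable subset on which $f$ is continuous.

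To show $\dim(X \setminus C) < n$, I will argue that every $p \in X$ with $\dim(p/A) = n$ lies in $C$. Fix such a $p$ and let $q = f(p)$. Using local definability of $X$ and $Y$ together with Lemma~\ref{side-shrink} and Proposition~\ref{extension-2} to control parameters, find open neighborhoods $U \ni p$ and $V \ni q$, definable sets $D \subseteq \Mm^k$ and $E \subseteq \Mm^{k'}$, and interpretable homeomorphisms $\phi : U \to D$ and $\psi : V \to E$, such that $f(U) \subseteq V$ and all the parameters defining $(U, \phi, V, \psi)$ together with $A$ form a set $B \supseteq A$ with $B \dimind_A p$. The composite $g := \psi \circ f \circ \phi^{-1} : D \to E$ is then interpretable with domain and codomain in the home sort, and hence definable over real parameters (any interpretable subset of the home sort can be re-expressed over real representatives of its imaginary parameters). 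A dimension count using $B \dimind_A p$ gives $\dim(D) = n$ and shows that $\phi(p)$ is generic in $D$ over $B$. The classical generic continuity theorem for definable functions in $\pCF$ (which via cell decomposition produces a continuity locus whose complement has strictly smaller dimension) then says $g$ is continuous at $\phi(p)$; transporting back through the two homeomorphisms, $f$ is continuous at $p$, so $p \in C$.

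For (2), iterate (1). Set $X_0 := X$, apply (1) to obtain a large open interpretable subset $X'_0 \subseteq X_0$ on which $f$ is continuous, and set $X_1 := X_0 \setminus X'_0$, a closed interpretable subset of strictly smaller dimension. By Proposition~\ref{subspaces}, $X_1$ is admissible in the subspace topology, so (1) applies again to $f \restriction X_1$. The resulting chain has strictly decreasing dimension and therefore terminates after finitely many steps, writing $X$ as a disjoint union $X'_0 \sqcup X'_1 \sqcup \cdots \sqcup X'_{m-1}$ of interpretable subsets, each of which is open in the closed subset $X_i$ of $X$ (hence locally closed in $X$) and on each of which $f$ is continuous.

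The main obstacle is the parameter bookkeeping in (1): arranging $B \dimind_A p$ while simultaneously requiring $p \in U$ and $q \in V$ requires careful combination of Lemma~\ref{side-shrink}-style shrinking of neighborhoods with Proposition~\ref{extension-2}-style moves of auxiliary parameters. Once $g$ is identified as a definable function between definable sets in the home sort, the appeal to classical generic continuity in $\pCF$ is routine.
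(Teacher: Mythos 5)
Your proposal follows essentially the same route as the paper, which simply defers to the o-minimal argument of \cite[Proposition~4.12]{wj-o-minimal}: reduce to showing $f$ is continuous at every point generic over the defining parameters, transport to a definable function between definable subsets of the home sort via local definability and an independence argument, and invoke classical generic continuity in $\pCF$; part (2) is the standard iteration on dimension. One sub-step deserves more than ``parameter bookkeeping'': you cannot simply \emph{choose} $U$ with $f(U) \subseteq V$, since $f$ is not yet known to be continuous at $p$, so shrinking $U$ does not help. The fix is to note that $f^{-1}(V)$ is a $B$-interpretable set whose boundary has dimension $< \dim(X)$ by Proposition~\ref{frontier-dimension}, so the generic point $p$ (with $p \dimind_A B$) lies in the interior of $f^{-1}(V)$, and one may then replace $U$ by $U \cap \operatorname{int}(f^{-1}(V))$. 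With that observation supplied, the argument is correct.
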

\begin{proof}
  The proof of \cite[Proposition~4.12]{wj-o-minimal} works,
  essentially verbatim, using generic continuity of definable
  functions in $\pCF$.
\end{proof}
\begin{corollary} \label{gen-con-2}
  Let $X, Y$ be $A$-interpretable admissible topological spaces, and
  $f : X \to Y$ be $A$-interpretable.  If $b \in X$ is generic over
  $A$, then $f$ is continuous at $p$.
\end{corollary}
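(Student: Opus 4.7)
The plan is to bootstrap the ``existence of a large open set'' from Proposition~\ref{gen-con} into the ``every generic point is a continuity point'' statement via an automorphism argument of the kind used repeatedly in Section~\ref{ss-tame} (cf.\ the proof of Proposition~\ref{mostly-euclidean}).

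\medskip

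First, I would apply Proposition~\ref{gen-con} to obtain a large open subset $X_0 \subseteq X$ on which $f \restriction X_0$ is continuous. Let $B \supseteq A$ be a small set of parameters over which $X, Y, f$, and $X_0$ are all interpretable. By Proposition~\ref{extension-2}, there is $\sigma \in \Aut(\Mm/A)$ with $\sigma(B) \dimind_A b$. Set $B' := \sigma(B)$ and $X_0' := \sigma(X_0)$. Since $X, Y, f$ are $A$-interpretable, they are fixed setwise by $\sigma$; since the admissible topologies on $X$ and $Y$ are $A$-interpretable, $\sigma$ acts as a homeomorphism of $X$ and of $Y$. Consequently $X_0' \subseteq X$ is an open $B'$-interpretable subset, the complement $X \setminus X_0'$ has dimension $< \dim(X)$ (so $X_0'$ is again large), and $f \restriction X_0'$ is the conjugate $\sigma \circ (f \restriction X_0) \circ \sigma^{-1}$ of a continuous map by homeomorphisms, hence continuous.

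\medskip

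Next, I would show $b \in X_0'$. The complement $X \setminus X_0'$ is a $B'$-interpretable set of dimension strictly less than $\dim(X)$, so by Fact~\ref{continuity-01}(\ref{dos}) every element $c \in X \setminus X_0'$ satisfies $\dim(c/B') < \dim(X)$. On the other hand, $b$ is generic in $X$ over $A$, i.e., $\dim(b/A) = \dim(X)$, and $b \dimind_A B'$ by Lemma~\ref{ind-1}(\ref{eins}) together with $B' \dimind_A b$; by Proposition~\ref{ind-2}(\ref{sami}) this yields $\dim(b/B') = \dim(b/A) = \dim(X)$. Thus $b \notin X \setminus X_0'$, so $b \in X_0'$, and continuity of $f \restriction X_0'$ at $b$ is precisely continuity of $f$ at $b$ (as $X_0'$ is open in $X$).

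\medskip

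The only potentially subtle step is justifying that $\sigma$ acts as a homeomorphism of $X$, i.e., that the interpretable topology on an $A$-interpretable admissible space is itself $A$-interpretable. This follows because the basis of opens witnessing interpretability of the topology can be taken $A$-interpretable (by choosing an $A$-interpretable parameterization and applying compactness/saturation if needed), so $A$-automorphisms permute the basic opens and therefore preserve the topology. Everything else is a routine combination of genericity, base monotonicity, and Proposition~\ref{extension-2}.
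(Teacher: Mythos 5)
Your proof is correct, and it is exactly the argument the paper intends (the corollary is stated without proof as an immediate consequence of Proposition~\ref{gen-con}): move the large open set of continuity by an automorphism over $A$ to make its parameters $\dimind$-independent from $b$, then use Fact~\ref{continuity-01}(\ref{dos}) to conclude the generic point $b$ cannot lie in the lower-dimensional complement. This is the same pattern used in Lemma~\ref{generic-eucl} and Lemma~\ref{tricks}(\ref{tr2}), and your handling of the one subtle point (that an $A$-automorphism preserves the $A$-interpretable topologies) is fine.
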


\section{Admissible groups} \label{adm-sec-2}

\subsection{Uniqueness}
\begin{lemma} \label{to-admissible}
  Let $G$ be an interpretable group, and let $\tau$ be an
  interpretable topology that is invariant under left translations.
  If $\tau$ is definably dominated, then $\tau$ is admissible and
  locally Euclidean.
\end{lemma}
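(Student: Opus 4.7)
The plan is to leverage Lemma~\ref{some-open}, which guarantees that the non-empty definably dominated space $G$ contains a non-empty open subset $U \subseteq G$ that is strongly admissible, and hence locally Euclidean. The left-translation invariance of $\tau$ will then propagate local Euclideanity from $U$ to all of $G$, after which local definability and admissibility follow essentially for free.

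Concretely, I would first fix a point $u_0 \in U$. For any $p \in G$, set $g = p u_0^{-1}$, so that $p \in gU$. Because $\tau$ is invariant under left translations, the map $x \mapsto gx$ is a homeomorphism $G \to G$; therefore $gU$ is open in $G$ and the restriction $U \to gU$ is an interpretable homeomorphism. Local Euclideanity is preserved under homeomorphism, so $gU$ is a locally Euclidean open neighborhood of $p$. Since $p$ was arbitrary, $G$ is locally Euclidean.

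Once local Euclideanity is established, local definability is immediate: a Euclidean chart $V \to \Mm^n$ with $V$ open in $\Mm^n$ is in particular a chart onto a definable subset of $\Mm^n$. Combined with the assumed definable domination, this shows that $\tau$ is admissible in the sense of Definition~\ref{adm-def}. Hausdorffness is not a separate concern, because the hypothesis that $\tau$ is definably dominated already includes it.

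The only potentially subtle point is the interplay between translation invariance and the strongly admissible subset produced by Lemma~\ref{some-open}: one needs that left translations are genuine homeomorphisms (not merely set-theoretic bijections), but this is exactly what ``invariant under left translations'' provides. No real technical obstacle arises, since Lemma~\ref{some-open} does the heavy lifting of producing a locally Euclidean open piece, and the group action supplies transitivity for free.
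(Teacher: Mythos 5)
Your proposal is correct and follows exactly the paper's argument: apply Lemma~\ref{some-open} to obtain a non-empty open locally Euclidean subset, use left-translation invariance to propagate local Euclideanity to all of $G$, and then conclude admissibility from local definability plus the assumed definable domination. The paper's proof is just a terser version of the same reasoning.
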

\begin{proof}
  Lemma~\ref{some-open} gives a non-empty open subset of $(G,\tau)$
  that is locally Euclidean.  By translation invariance, $(G,\tau)$ is
  locally Euclidean everywhere.  Then $(G,\tau)$ is definably
  dominated and locally definable, i.e., admissible.
\end{proof}
\begin{lemma} \label{uniqueness-0}
  Let $G$ be an interpretable group.  Then $G$ admits at most one
  admissible topology $\tau$ that is invariant under
  left-translations.
\end{lemma}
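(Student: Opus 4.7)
The plan is to consider the identity map $f = \id_G$ from $(G,\tau_1)$ to $(G,\tau_2)$ between two admissible, left-translation-invariant topologies on $G$, and show that it is a homeomorphism. Fix a small set $A$ of parameters over which $G$, $\tau_1$, and $\tau_2$ are all interpretable. We may assume $G \ne \varnothing$. Pick a point $b \in G$ with $\dim(b/A) = \dim(G)$, i.e., a point generic over $A$; such a point exists by the extension property of $\dim$. Since $(G,\tau_1)$ and $(G,\tau_2)$ are $A$-interpretable admissible spaces and $f$ is $A$-interpretable, Corollary~\ref{gen-con-2} gives that $f$ is continuous at $b$.

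Next, I spread continuity from $b$ to every point using translation invariance. The hypothesis that $\tau_i$ is invariant under left translations means precisely that $gU$ is $\tau_i$-open whenever $U$ is $\tau_i$-open, so each $L_g(x) = gx$ is a self-homeomorphism of $(G,\tau_i)$ (with inverse $L_{g^{-1}}$). For any $p \in G$, decompose
\[
f \;=\; L^{\tau_2}_{pb^{-1}} \circ f \circ L^{\tau_1}_{bp^{-1}},
\]
where the superscripts record the topology in which each translation is a homeomorphism. The first factor $L^{\tau_1}_{bp^{-1}}$ sends $p$ to $b$ and is continuous at $p$; the middle $f$ is continuous at $b$; and $L^{\tau_2}_{pb^{-1}}$ is continuous at $b$. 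So the composition is continuous at $p$, and since $p$ was arbitrary, $f$ is continuous on all of $G$. This shows that $\tau_1$ refines $\tau_2$. Swapping the roles of $\tau_1$ and $\tau_2$ yields the reverse refinement, so $\tau_1 = \tau_2$.

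There is no real obstacle in this argument: it is essentially a formal consequence of generic continuity (Corollary~\ref{gen-con-2}) combined with the observation that translation invariance turns each $L_g$ into a $\tau_i$-homeomorphism, so that continuity at a single generic point propagates to continuity everywhere. All the substantive topological content is already packed into the generic continuity statement, which in turn rests on Proposition~\ref{gen-con} and the tameness properties of admissible topologies developed in \S\ref{ss-tame}.
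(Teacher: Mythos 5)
Your proposal is correct and matches the paper's argument: the paper likewise obtains continuity of $\id_G : (G,\tau_1)\to(G,\tau_2)$ at one point via generic continuity (Proposition~\ref{gen-con}) and then conjugates by left translations to propagate continuity everywhere, concluding by symmetry. The only cosmetic difference is that you invoke Corollary~\ref{gen-con-2} at a generic point rather than Proposition~\ref{gen-con} directly, which changes nothing of substance.
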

\begin{proof}
  Suppose $\tau_1$ and $\tau_2$ are left-invariant admissible
  topologies on $G$.  By Proposition~\ref{gen-con}, the map $\id_G :
  (G,\tau_1) \to (G,\tau_2)$ is continuous at at least one point $a
  \in G$.  If $\delta \in G$ and $f(x) = \delta \cdot x$, then the
  composition
  \begin{equation*}
    (G,\tau_1) \stackrel{f}{\to} (G,\tau_1) \stackrel{\id_G}{\to}
    (G,\tau_2) \stackrel{f^{-1}}{\to} (G,\tau_2)
  \end{equation*}
  is continuous at $\delta^{-1} a$.  That is, $\id_G : (G,\tau_1) \to
  (G,\tau_2)$ is continuous at $\delta^{-1} a$, for arbitrary
  $\delta$.  Then $\id_G : (G,\tau_1) \to (G,\tau_2)$ is continuous
  (everywhere).  Similarly, $\id_G : (G,\tau_2) \to (G,\tau_1)$ is
  continuous, implying $\tau_1 = \tau_2$.
\end{proof}
Lemma~\ref{uniqueness-0} shows that an interpretable group admits at
most one admissible group topology.  In the following section, we will
see that every interpretable group atmits \emph{at least one}
admissible group topology.  For now, we draw a useful consequence
of the above lemmas.
\begin{lemma} \label{upgrade}
  Let $G$ be an interpretable group.  Let $\tau$ be a definably
  dominated interpretable topology on $G$ that is invariant under left
  translations.  Then $\tau$ is an admissible group topology on $G$.
\end{lemma}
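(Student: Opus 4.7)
My plan is to combine Lemma~\ref{to-admissible} with a generic-continuity argument to establish continuity of multiplication and inversion. By Lemma~\ref{to-admissible}, $\tau$ is already admissible and locally Euclidean, so $G \times G$ with the product topology is admissible by Proposition~\ref{products-etc}, and Proposition~\ref{gen-con} applies to all interpretable maps I need to consider. What remains is to show that $\tau$ is a group topology, i.e., that $m : G \times G \to G$ and $i : G \to G$ are continuous.

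The key preliminary step is to upgrade continuity of right translations $r_g : x \mapsto xg$, which is not obviously available from left-translation invariance alone. By Proposition~\ref{gen-con}, $r_g$ is continuous at some point $a \in G$. For any $b \in G$, the identity $r_g = \ell_{ba^{-1}} \circ r_g \circ \ell_{ab^{-1}}$ (which holds because left and right translations commute) exhibits $r_g$ at $b$ as a composition of maps continuous at the relevant points: $\ell_{ab^{-1}}$ is a homeomorphism sending $b$ to $a$, $r_g$ is continuous at $a$, and $\ell_{ba^{-1}}$ is a homeomorphism. Hence $r_g$ is continuous at $b$, and so every $r_g$ is a homeomorphism.

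With both left and right translations now known to be homeomorphisms, the continuity of $m$ propagates from any single point of continuity to all of $G \times G$. Proposition~\ref{gen-con} gives continuity of $m$ at some $(a, b)$; for arbitrary $(a', b')$, I define $\phi(x, y) = (a(a')^{-1} x, \, y (b')^{-1} b)$, a product of a left and a right translation, hence a homeomorphism of $G \times G$ sending $(a', b')$ to $(a, b)$. A direct computation gives $(m \circ \phi)(x, y) = a(a')^{-1} \cdot m(x, y) \cdot (b')^{-1} b$, and rearranging expresses $m$ near $(a', b')$ as a composition of continuous maps, giving continuity of $m$ at $(a', b')$. For inversion, Proposition~\ref{gen-con} provides continuity of $i$ at some $a$; for any $b$, setting $c = ab^{-1}$ so that $\ell_c$ sends $b$ to $a$, the identity $i \circ \ell_c = r_{c^{-1}} \circ i$ shows that $r_{c^{-1}} \circ i$ is continuous at $b$, and since $r_{c^{-1}}$ is now a homeomorphism, $i$ itself is continuous at $b$.

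The main obstacle is the bootstrapping step in which right-translation continuity is derived from left invariance; everything else follows formally from generic continuity once translations are established as homeomorphisms. I do not expect any issues with interpretability of the various auxiliary maps, since $m$, $i$, $r_g$, $\ell_c$, and $\phi$ are all interpretable, and $G \times G$ is admissible by the closure properties already established in Section~\ref{closure-props}.
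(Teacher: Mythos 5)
Your proposal is correct. The strategy is the same as the paper's at its core: invoke Lemma~\ref{to-admissible} to get admissibility, then use generic continuity (Proposition~\ref{gen-con}) at a single point together with the homogeneity supplied by translations to propagate continuity everywhere. The one place you diverge is in how right translations and inversion are handled. The paper does not apply Proposition~\ref{gen-con} to $r_g$ or to $i$ at all; instead it pushes $\tau$ forward along $r_g$ (resp.\ along $i$) to get a new topology $\tau'$ (resp.\ $\tau^{-1}$), observes that the pushforward is again a left-invariant definably dominated interpretable topology because left and right translations commute, and concludes $\tau' = \tau$ from the uniqueness statement Lemma~\ref{uniqueness-0}. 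This yields right-\emph{invariance} and inversion-invariance of $\tau$ in one stroke. You instead re-run the ``continuous at one generic point, then conjugate by left translations'' argument for each of $r_g$ and $i$ separately; your identities $r_g = \ell_{ba^{-1}} \circ r_g \circ \ell_{ab^{-1}}$ and $i \circ \ell_c = r_{c^{-1}} \circ i$ are correct and the bootstrapping is sound (since $r_{g^{-1}}$ is continuous by the same argument, each $r_g$ is indeed a homeomorphism). The treatment of $m$ via the map $\phi$ is essentially identical to the paper's identity $m(x,y) = \epsilon^{-1} \cdot m(\epsilon x, y\delta) \cdot \delta^{-1}$. The paper's route is marginally more economical because the uniqueness lemma is already available and encapsulates the propagation argument; yours is more self-contained but repeats that mechanism three times. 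Both are valid.
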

\begin{proof}
  First, $\tau$ is admissible by Lemma~\ref{to-admissible}.  We claim
  $\tau$ is invariant under right translations.  Take $g \in G$, and
  let $\tau'$ be the image of $\tau$ under the right translation $x
  \mapsto g \cdot x$.  Then $\tau'$ is invariant under left
  translations (because left and right translations commute).  By
  Lemma~\ref{uniqueness-0}, $\tau' = \tau$.  Thus $\tau$ is
  right-invariant.  In particular, right translations are continuous.

  Let $\tau^{-1}$ be the image of $\tau$ under the inverse map.  The
  fact that $\tau$ is right-invariant implies that $\tau^{-1}$ is left
  invariant.  Then $\tau^{-1} = \tau$, implying that the inverse map
  is continuous.

  Finally we show that the group operation $m : G \times G \to G$ is
  continuous.  By Proposition~\ref{gen-con} (and
  Proposition~\ref{products-etc}), the group operation $m$ is
  continuous at at least one point $(a,b)$.  For any $\delta, \epsilon
  \in G$, we have
  \begin{equation*}
    m(x,y) = \epsilon^{-1} \cdot m(\epsilon \cdot x, y \cdot \delta) \cdot \delta^{-1}.
  \end{equation*}
  The right hand side is continuous at $(x,y) = (\epsilon^{-1} \cdot
  a, b \cdot \delta^{-1})$, and therefore so is the left hand side.
  As $\epsilon, \delta$ are arbitrary, $m$ is continuous everywhere.
\end{proof}

\subsection{Existence} \label{sec:ex}
In this section we show that each interpretable group admits at least
one strongly admissible group topology.
\begin{lemma} \label{genericky-tool}
  Let $G$ be an interpretable group and let $U \subseteq G$ be a large
  subset.
  \begin{enumerate}
  \item \label{gt1} Finitely many left-translates of $U$ cover $G$.
  \item \label{gt2} For any $x, y \in G$, there is a left-translate $g \cdot U$
    containing both $x$ and $y$.
  \end{enumerate}
\end{lemma}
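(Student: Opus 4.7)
The plan is to prove (2) first by a direct dimension calculation, and then deduce (1) by a compactness/saturation argument.

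For part (2), I would observe that $\{g \in G : x \in gU\}$ equals $xU^{-1}$, since $x \in gU$ is equivalent to $g^{-1}x \in U$, i.e., $g \in xU^{-1}$. Inversion $g \mapsto g^{-1}$ and left-translation by $x$ are interpretable bijections $G \to G$ and so preserve dimension by Proposition~\ref{fibers}. Hence $\dim(G \setminus xU^{-1}) = \dim(G \setminus U) < \dim(G)$, showing that $xU^{-1}$ is a large subset of $G$; by symmetry, so is $yU^{-1}$. Now $(G \setminus xU^{-1}) \cup (G \setminus yU^{-1})$ has dimension strictly less than $\dim(G)$ by Proposition~\ref{dim-of-sets}, so its complement $xU^{-1} \cap yU^{-1}$ is large in $G$, and in particular non-empty (we may assume $G$ is non-empty, otherwise there is nothing to prove). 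Any $g$ in this intersection satisfies $x, y \in gU$.

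For part (1), the first step is to verify that the interpretable family $\{gU : g \in G\}$ already covers $G$. Given $a \in G$, pick any $h \in U$ (which exists because $U$ is large and $G$ is non-empty, so $\dim(U) = \dim(G) \ge 0$); then $a = (ah^{-1})h \in (ah^{-1})U$. Consequently the partial type $\{x \in G\} \cup \{x \notin gU : g \in G\}$ is unrealized in $\Mm$, and by compactness it admits a finite inconsistent subset. That finite inconsistent subset yields $g_1, \ldots, g_n$ with $G \subseteq g_1 U \cup \cdots \cup g_n U$.

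The main point to watch in (1) is the compactness step: the parameter set of the partial type is all of $G$, which need not be ``small'' in the strict monster-model sense. This is handled by the standing convention that $\Mm$ is sufficiently saturated, or alternatively by restricting parameters to a small elementary submodel defining $G$ and $U$ and then invoking elementarity on the first-order statement ``there exist $g_1, \ldots, g_n$ with $G \subseteq g_1U \cup \cdots \cup g_nU$.''
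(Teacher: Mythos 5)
Your part~(\ref{gt2}) is correct and is essentially the paper's argument in different clothing: the paper takes $g$ generic in $G$ over $Axy$ and notes that $g^{-1}x$ and $g^{-1}y$ are then both generic over $Axy$, hence lie in $U$; you instead observe that $xU^{-1}$ and $yU^{-1}$ are large and intersect. Both come down to the same dimension computation, and your version is fine.

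Part~(\ref{gt1}), however, has a genuine gap at the compactness step, and the issue is not merely the bookkeeping worry you flag at the end. The implication ``$\Sigma$ is unrealized in $\Mm$, hence some finite subset of $\Sigma$ is inconsistent'' is simply false when the parameter set of $\Sigma$ has the full cardinality of the monster: the type $\{x \ne g : g \in \Mm\}$ is unrealized in $\Mm$ yet every finite subset is realized in $\Mm$. Neither of your proposed repairs closes this. Saturation of $\Mm$ only handles types over \emph{small} parameter sets, and $G$ is not small. Restricting to translates $gU$ with $g \in G(M)$ for a small model $M$ changes the type into one that may well be consistent: you would then need to show that no element of $G$ avoids \emph{all} $M$-rational translates of $U$, and nothing in your argument provides that. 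The decisive symptom is that your proof of (\ref{gt1}) uses only that $U$ is non-empty (that is all that is needed for $\{gU : g\in G\}$ to cover $G$, via $a = (ah^{-1})h$), whereas the conclusion is false for non-empty sets that are not large: take $G = (\Mm,+)$ and $U = \{0\}$, so that every translate is a singleton and no finite collection of translates covers $G$, even though $\{gU\}_{g\in G}$ does. So largeness of $U$ must enter the finite-subcover step itself. The paper does this by a pigeonhole argument on dimension: with $n = \dim(G)$, pick $(g_1,\dots,g_{n+1})$ generic in $G^{n+1}$ over a defining set $A$; for any $h \in G$ the chain $\dim(h/Ag_1\cdots g_i)$ can drop at most $n$ times, so for some $i$ one gets $h \dimind_{A,g_1,\dots,g_{i-1}} g_i$, whence $h \dimind_A g_i$ by transitivity, so $g_i^{-1}\cdot h$ is generic over $Ah$ and therefore lies in the large $Ah$-interpretable set $U$. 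This yields the explicit bound of $n+1$ translates. Your proof of (\ref{gt2}) already contains the right ingredient (the set $hU^{-1}$ of good translates for $h$ is large), but to finish (\ref{gt1}) you must extract a single finite set of $g$'s that works simultaneously for all $h$, which is exactly what the generic tuple accomplishes.
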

\begin{proof}
  Take a small set of parameters $A$ defining $G$ and $U$.
  \begin{enumerate}
  \item 
    Take $(g_1,\ldots,g_{n+1})$ generic in $G^{n+1}$ over $A$.  Note
    that $\dim(g_i/A,g_1,\ldots,g_{i-1}) = \dim(g_i/A) = \dim(G)$ for
    each $i$, and so $g_i \dimind_A g_1,\ldots,g_{i-1}$.

    We claim $G \subseteq \bigcup_{i = 1}^{n+1} g_i \cdot U$.  Fix any
    $h \in U$.  For $0 \le i \le n+1$ let $d_i =
    \dim(h/A,g_1,\ldots,g_i)$.  Then
    \begin{equation*}
      0 \le d_{n+1} \le d_n \le \cdots \le d_1 \le d_0 = \dim(h/A)
      \le \dim(G) = n.
    \end{equation*}
    It is impossible that $0 \le d_{n+1} < d_n < \cdots < d_0 \le n$,
    so there is some $1 \le i \le n+1$ such that $d_{i-1} = d_i$,
    i.e., $\dim(h/A,g_1,\ldots,g_{i-1}) = \dim(h/A,g_1,\ldots,g_i)$.
    Then
    \begin{equation*}
      h \dimind_{A, g_1,\ldots,g_{i-1}} g_i \text{ and } g_1,\ldots,g_{i-1} \dimind_A g_i,
    \end{equation*}
    so $h \dimind_A g_i$ by left transitivity.  Then $\dim(g_i/Ah) =
    \dim(g_i/A) = \dim(G)$, and $g_i$ is generic in $G$ over $Ah$.
    Then $\dim(g_i^{-1} \cdot h / Ah) = \dim(g_i / Ah) = \dim(G)$, and
    so $g_i^{-1} \cdot h$ must be in the $Ah$-interpretable large subset
    $U$, implying $h \in g_i \cdot U$.
  \item Take $g \in G$ generic over $Axy$.  Then $\dim(g^{-1} \cdot x
    / Axy) = \dim(g/Axy) = \dim(G)$, and so $g^{-1} \cdot x$ is in the
    $Axy$-interpretable large subset $U$.  Then $x \in g \cdot U$, and
    similarly $y \in g \cdot U$. \qedhere
  \end{enumerate}
\end{proof}

\begin{lemma} \label{finite-cover}
  Let $X$ be a Hausdorff interpretable topological space.  Let $U_1,
  \ldots, U_n$ be finitely many interpretable open subsets covering
  $X$.  If each $U_i$ is strongly admissible, then $X$ is strongly
  admissible.
\end{lemma}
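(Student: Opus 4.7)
The plan is to verify the three defining conditions of strong admissibility for $X$: Hausdorff, locally Euclidean, and manifold dominated. The Hausdorff condition is part of the hypothesis, so only the latter two require work.

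For local Euclideanity, given any $p \in X$, fix $i$ with $p \in U_i$. Since $U_i$ is strongly admissible, it is locally Euclidean, so some neighborhood $V$ of $p$ in $U_i$ is interpretably homeomorphic to an open subset of $\Mm^k$. Because $U_i$ is open in $X$, the set $V$ is also a neighborhood of $p$ in $X$, which gives the required chart. Hence $X$ is locally Euclidean.

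For manifold domination, I would construct a witness as a disjoint union. For each $i$, let $f_i : Y_i \to U_i$ be an interpretable surjective open map witnessing that $U_i$ is manifold dominated, where $Y_i$ is a definable manifold. Set $Y := Y_1 \sqcup \cdots \sqcup Y_n$; this is again a definable manifold, since a finite disjoint union of finite atlases is finite (alternatively, apply Proposition~\ref{products-etc}). Define $f : Y \to X$ by $f \restriction Y_i := \iota_i \circ f_i$, where $\iota_i : U_i \hookrightarrow X$ is the inclusion. Then $f$ is interpretable and surjective (because the $U_i$ cover $X$). It remains to check that $f$ is an open map, which in our convention includes being continuous.

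The key observation is that each $Y_i$ is clopen in $Y$, and each $U_i$ is open in $X$. For continuity, if $W \subseteq X$ is open, then $f^{-1}(W) \cap Y_i = f_i^{-1}(W \cap U_i)$ is open in $Y_i$ since $f_i$ is continuous and $W \cap U_i$ is open in $U_i$; hence $f^{-1}(W)$ is open in $Y$. For openness, if $V \subseteq Y$ is open, then $V \cap Y_i$ is open in $Y_i$, so $f_i(V \cap Y_i)$ is open in $U_i$ and therefore open in $X$, giving that $f(V) = \bigcup_i f_i(V \cap Y_i)$ is open in $X$. Thus $f : Y \to X$ is an interpretable surjective open map from a definable manifold, so $X$ is manifold dominated. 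Combined with the Hausdorff and locally Euclidean properties, this shows $X$ is strongly admissible. No step is a genuine obstacle; the main point is simply bookkeeping the fact that openness and continuity of a map out of a finite disjoint union can be checked piecewise.
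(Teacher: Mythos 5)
Your proof is correct and is essentially the paper's argument: the paper forms the disjoint union $U_1 \sqcup \cdots \sqcup U_n$, cites Proposition~\ref{products-etc} for manifold domination of the disjoint union and Lemma~\ref{dom-trans} to push it forward along the surjective open map onto $X$, which is exactly the construction you carry out by hand (your $Y_1 \sqcup \cdots \sqcup Y_n \to X$ is the composite of the witnesses those two lemmas produce). No gaps.
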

\begin{proof}
  The space $X$ is locally Euclidean because it is covered by locally
  Euclidean spaces.  The disjoint union $U_1 \sqcup \cdots \sqcup U_n$
  is manifold dominated by Proposition~\ref{products-etc}.  The map
  $U_1 \sqcup \cdots \sqcup U_n \to U_1 \cup \cdots \cup U_n = X$ is
  an interpretable surjective  open map, and so $X$ is
  manifold dominated by Lemma~\ref{dom-trans}.  Then $X$ is Hausdorff,
  locally Euclidean, and manifold dominated.
\end{proof}

\begin{proposition} \label{construction-2}
  Let $G$ be an interpretable group.  Then there is a strongly
  admissible group topology $\tau$ on $G$.
\end{proposition}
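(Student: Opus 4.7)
The plan is to mimic Pillay's construction of a definable manifold structure on a definable group~\cite{Pillay-G-in-p}, transferred to the interpretable setting using the admissibility framework of Section~\ref{adm-sec-1}. Theorem~\ref{construction-1} already produces \emph{some} strongly admissible interpretable topology $\sigma$ on $G$; the task is to reshape $\sigma$ into a left-translation-invariant topology without losing strong admissibility.

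First, observe that $(G \times G, \sigma \times \sigma)$ is strongly admissible by Proposition~\ref{products-etc}. Apply Proposition~\ref{gen-con} and Corollary~\ref{gen-con-2} to the multiplication $m : (G \times G, \sigma \times \sigma) \to (G, \sigma)$ and the inversion $\iota : (G, \sigma) \to (G, \sigma)$, and combine with Proposition~\ref{mostly-euclidean}, to locate a large open subset $U_0 \subseteq G$ such that $(U_0, \sigma)$ is locally Euclidean and the group operations behave continuously with respect to $\sigma$ on suitable subsets of $U_0$ and $U_0 \times U_0$. After a left translation (which preserves all these properties after renaming $\sigma$), arrange that $e \in U_0$, and use Lemma~\ref{side-shrink} to shrink $U_0$ to a symmetric $\sigma$-open neighborhood of $e$ on which $m$ restricts to a continuous map into $G$.

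Next, define $\tau$ on $G$ to be the interpretable topology with basis $\{g \cdot W : g \in G,\ W \text{ a } \sigma\text{-open neighborhood of } e \text{ with } W \subseteq U_0\}$. This family is interpretable because $\sigma$ has an interpretable basis, and $\tau$ is left-invariant by construction. Hausdorffness and local Euclideanity at $e$ transfer from $(U_0, \sigma)$ and then spread to all of $G$ by left-invariance. By Lemma~\ref{genericky-tool}(\ref{gt1}), finitely many left translates of $U_0$ cover $G$; each translate inherits strong admissibility from $(U_0, \sigma)$, so Lemma~\ref{finite-cover} yields that $(G, \tau)$ is strongly admissible. Finally, Lemma~\ref{upgrade} upgrades the definably dominated left-invariant $\tau$ to a genuine admissible group topology, which remains strongly admissible.

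The main obstacle will be showing that the proposed family $\{g \cdot W\}$ actually forms a basis for a topology. Given a point $x \in g_1 W_1 \cap g_2 W_2$, one must produce a smaller basic open $x \cdot W_3$ sitting inside the intersection, and after pushing through left translations this reduces to continuity of $m$ at various points of $U_0 \times \{e\}$, not just at $(e, e)$. Arranging enough continuity of $m$ on $U_0$ to support this gluing is where the careful combination of generic continuity with the shrinking provided by Lemma~\ref{side-shrink} and the dimensional independence machinery of Section~\ref{sec-dim} becomes essential. Once the basis condition is verified, the rest of the argument proceeds formally from the preceding lemmas.
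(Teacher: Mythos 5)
Your outer strategy matches the paper's: start from the strongly admissible topology of Theorem~\ref{construction-1}, carve out a large open piece $U$ on which translations behave well, transport it around $G$ to get a left-invariant topology, then invoke Lemma~\ref{genericky-tool}, Lemma~\ref{finite-cover}, and Lemma~\ref{upgrade}. But the proposal has a genuine gap precisely at the step you defer to the end: verifying that the translated sets glue into a topology (your ``basis condition''). This is not a routine verification that ``proceeds formally from the preceding lemmas''---it is the heart of the proof, and the tools you name for it would not deliver it.

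Concretely, your basis condition (and, more importantly, the requirement that the new topology restrict to $\sigma$ on $U_0$, which you need for local Euclideanity and strong admissibility to transfer) forces you to know that for \emph{all} pairs $a,b\in U_0$ the map $x\mapsto b\cdot a^{-1}\cdot x$ is $\sigma$-continuous at $a$; after your recentering at $e$ this becomes continuity of $m$ at the points of $U_0\times\{e\}$. That set has dimension $\dim(G)<\dim(G\times G)$, so generic continuity of $m$ on $G\times G$ (Proposition~\ref{gen-con}, Corollary~\ref{gen-con-2}) says nothing about it, and Lemma~\ref{side-shrink} only controls the parameters of shrunken neighborhoods---it produces no continuity. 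Moreover, recentering at $e$ is exactly the wrong move: $e$ is a non-generic point (unless $\dim G=0$), so no genericity argument will give continuity there. The paper's missing ingredient is the preorder $a\preceq b$ (``$x\mapsto b\cdot a^{-1}\cdot x$ is continuous at $a$'') and the observation that any pair $(a,b)$ generic in $G^2$ satisfies $a\approx b$, because $a$ is then generic over the parameter $\delta=b\cdot a^{-1}$. Hence the $\approx$-class of one fixed generic $a_0$ is large; taking $U$ to be its interior yields exactly the two-sided continuity statement needed for the gluing (formalized in the paper as the claim that $(x,y)E(x',y')\iff xy=x'y'$ is an OQ equivalence relation on $G_{\mathrm{disc}}\times U$), and the identity is never required to lie in $U$. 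Without this largeness-of-an-equivalence-class argument, your construction of $U_0$ via generic continuity of $m$ and Proposition~\ref{mostly-euclidean} does not secure the hypotheses your basis verification needs.

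A smaller omission: Hausdorffness of $(G,\tau)$ does not ``spread by left-invariance'' from the single chart; two points in different translates of $U_0$ must be separated, which is why the paper needs Lemma~\ref{genericky-tool}(\ref{gt2}) (any two points lie in a common translate of $U$), a part of that lemma your proposal never uses.
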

\begin{proof}
  By Theorem~\ref{construction-1} there is a strongly admissible
  topology $\tau_0$ on $G$.  Let $C$ be a small set of parameters over
  which everything is defined.  If $a, b \in G$, let $a \preceq b$
  mean that the map $x \mapsto b \cdot a^{-1} \cdot x$ is
  $\tau_0$-continuous at $a$.  Then $\preceq$ is a $C$-interpretable
  preorder on $G$.  Let $\approx$ be the associated $C$-interpretable
  equivalence relation.
  \begin{claim}
    If $(a,b) \in G^2$ is generic over $C$, then $a \approx b$.
  \end{claim}
  \begin{claimproof}
    By symmetry it suffices to show $a \preceq b$.  Let $\delta = b
    \cdot a^{-1}$.  Then $\dim(a,\delta/C) = \dim(a,b/C) = 2 \dim(G)$,
    which implies that $a$ is generic in $G$ over $C \delta$.  By
    Corollary~\ref{gen-con-2}, the map $x \mapsto \delta \cdot x$ is
    $\tau_0$-continuous at $a$.
  \end{claimproof}
  Fix $a_0 \in G$ generic over $C$.  If $b \in G$ is generic over
  $Ca_0$, then $b \approx a_0$.  Therefore the equivalence class of
  $a_0$ is a large subset of $G$.  Let $U$ be the $\tau_0$-interior of
  this equivalence class.  Then $U$ is a large subset of $G$ by
  Proposition~\ref{frontier-dimension}.  We have constructed a large
  $\tau_0$-open subset $U \subseteq G$ such that if $a, b \in U$, then
  the map $x \mapsto b \cdot a^{-1} \cdot x$ is $\tau_0$-continuous at
  $x = a$.

  Consider $G \times U$, where $G$ has the discrete topology(!), $U$
  has the topology $\tau_0 \restriction U$, and $G \times U$ has the
  product topology.
  \begin{claim}
    Let $E$ be the equivalence relation on $G \times U$ given by
    \begin{equation*}
      (x,y)E(x',y') \iff x \cdot y = x' \cdot y'.
    \end{equation*}
    Then $E$ is an OQ equivalence relation on $G \times U$
    (Definition~\ref{def-oq}).
  \end{claim}
  \begin{claimproof}
    Fix an interpretable basis $\mathcal{B}$ for $U$.  Then $\{\{a\}
    \times B : a \in G, ~ B \in \mathcal{B}\}$ is a basis for $G
    \times U$.
    
    We verify that $E$ satisfies criterion (\ref{cfo4}) of
    Remark~\ref{criterion-for-oer}.  Fix $p = (a,b)$ and $p' = (c,d)$
    in $G \times U$, with $a \cdot b = c \cdot d$.  Fix a basic
    neighborhood $\{a\} \times B$ of $(a,b)$.  We must find a basic
    neighborhood $\{c\} \times B'$ of $(c,d)$ such that every element
    of $\{c\} \times B'$ is equivalent to an element of $\{a\} \times
    B$.  Note that $b \cdot d^{-1} = a^{-1} \cdot c$.  As $b, d \in
    U$, the map
    \begin{equation*}
      f(x) = b \cdot d^{-1} \cdot x = a^{-1} \cdot c \cdot x
    \end{equation*}
    is continuous at $x = d$, and maps $d$ to $b$.  As $B$ is a
    neighborhood of $b$, there is a neighborhood $B'$ of $d$ such that
    $f(B') \subseteq B$.  If $(c,x) \in \{c\} \times B'$ then
    $(c,x)E(a,f(x))$ and $(a,f(x)) \in \{a\} \times B$.  So every
    element of $\{c\} \times B'$ is equivalent to an element of $\{a\}
    \times B$, as required.
  \end{claimproof}
  Let $\tau$ be the quotient equivalence relation on $(G \times U)/E =
  G$.  By Fact~\ref{why-open}, $\tau$ is an interpretable equivalence
  relation on $G$ (but not necessarily Hausdorff).  For each $a \in
  G$, the composition
  \begin{gather*}
    U \hookrightarrow G \times U \to G \\
    x \mapsto (a,x) \mapsto a \cdot x
  \end{gather*}
  is an injective open map, i.e., an open embedding.  So we have
  proven the following:
  \begin{claim} \label{property-claim}
    There is an interpretable topology $\tau$ on $G$ such that for any
    $a \in G$, the map
    \begin{align*}
      i_a : (U,\tau_0 \restriction U) & \to (G,\tau) \\
      i_a(x) &= a \cdot x
    \end{align*}
    is an open embedding.
  \end{claim}
  The family of open embeddings $\{i_a\}_{a \in G}$ is jointly
  surjective, and so the property in Claim~\ref{property-claim}
  uniquely determines $\tau$.  The property is invariant under left
  translation, and therefore $\tau$ is invariant under left
  translations.

  We claim $\tau$ is Hausdorff.  Given distinct $x, y \in G$, there is
  some $a \in G$ such that $\{x,y\} \subseteq a \cdot U$, by
  Lemma~\ref{genericky-tool}(\ref{gt2}).  Then $a \cdot U$ is a Hausdorff open
  subset of $(G,\tau)$ containing $x$ and $y$, so $x$ and $y$ are
  separated by neighborhoods.

  By Remark~\ref{open-obvious}, $(U,\tau_0 \restriction U)$ is
  strongly admissible.  By Lemma~\ref{genericky-tool}(\ref{gt1}), finitely many
  left translates of $U$ cover $G$.  Each of these translates is
  homeomorphic to $(U,\tau_0 \restriction U)$, so by
  Lemma~\ref{finite-cover}, $(G,\tau)$ is strongly admissible.

  In summary, $\tau$ is an interpretable topology, $\tau$ is invariant
  under left translation, $\tau$ is Hausdorff, and $\tau$ is strongly
  admissible.  By Lemma~\ref{upgrade}, $\tau$ is a group topology on
  $G$.
\end{proof}

\begin{theorem} \label{adm-group-thm}
  Let $G$ be an interpretable group.
  \begin{enumerate}
  \item If $\tau$ is an interpretable group topology on $G$, then the
    following properties are equivalent:
    \begin{enumerate}
    \item $\tau$ is strongly admissible.
    \item $\tau$ is admissible.
    \item $\tau$ is manifold dominated.
    \item $\tau$ is definably dominated.
    \end{enumerate}
  \item There is a unique interpretable group topology $\tau$
    satisfying these equivalent conditions.
  \end{enumerate}
\end{theorem}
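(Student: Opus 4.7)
The plan is to assemble the equivalences and the uniqueness/existence statement directly out of the lemmas already proved in Section~\ref{adm-sec-2}. For part (2), existence is immediate from Proposition~\ref{construction-2}, which produces a strongly admissible group topology on $G$. Uniqueness of an admissible group topology follows from Lemma~\ref{uniqueness-0}, which in fact proves uniqueness in the wider class of left-invariant admissible topologies.

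For part (1), the implications (a)$\Rightarrow$(b), (a)$\Rightarrow$(c), (b)$\Rightarrow$(d), and (c)$\Rightarrow$(d) are built into Definition~\ref{adm-def} and the preliminary observations: manifold domination implies definable domination (proved just before Definition~\ref{adm-def}), and locally Euclidean plus manifold dominated is strictly stronger than definably dominated plus locally definable. So the only real content is (d)$\Rightarrow$(a). Suppose $\tau$ is a definably dominated interpretable group topology on $G$. Since group topologies are left-invariant, Lemma~\ref{to-admissible} gives that $\tau$ is admissible (and automatically locally Euclidean). By Proposition~\ref{construction-2} there exists some strongly admissible group topology $\tau^\ast$ on $G$; in particular $\tau^\ast$ is admissible and left-invariant. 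By Lemma~\ref{uniqueness-0}, $\tau = \tau^\ast$, so $\tau$ itself is strongly admissible.

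I expect no serious obstacle here: all of the work has been done in Lemmas~\ref{to-admissible}, \ref{uniqueness-0}, \ref{upgrade}, and Proposition~\ref{construction-2}. The only thing worth being careful about is whether Lemma~\ref{uniqueness-0} (phrased for left-invariant admissible topologies) really applies to $\tau^\ast$, but $\tau^\ast$ is by construction a group topology, hence in particular left-invariant, so the lemma applies and forces $\tau = \tau^\ast$. Once $\tau = \tau^\ast$ is established, strong admissibility of $\tau$ follows, and the four conditions collapse to one.

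In summary, the proof will have two short paragraphs: first noting existence and uniqueness from Proposition~\ref{construction-2} and Lemma~\ref{uniqueness-0}, thereby settling part (2) and singling out a distinguished topology $\tau^\ast$; and second, running through the trivial implications among (a)--(d), closing the cycle by showing that any definably dominated group topology must, by Lemma~\ref{to-admissible} and uniqueness, coincide with the strongly admissible $\tau^\ast$.
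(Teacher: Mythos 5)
Your proposal is correct and follows essentially the same route as the paper: Proposition~\ref{construction-2} for existence of a strongly admissible group topology, Lemma~\ref{to-admissible} to upgrade any definably dominated group topology to an admissible one, and Lemma~\ref{uniqueness-0} to force it to coincide with the strongly admissible one. No gaps.
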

\begin{proof}
  The weakest of the four conditions is definable domination, and the
  strongest is strong admissibility.  Proposition~\ref{construction-2}
  gives a strongly admissible group topology $\tau_0$.  Suppose $\tau$
  is definably dominated.  Then $\tau$ is admissible by
  Lemma~\ref{to-admissible}, and so $\tau = \tau_0$ by
  Lemma~\ref{uniqueness-0}.
\end{proof}

\subsection{Examples}
\begin{definition}
  An \emph{admissible group} is an interpretable group with an
  admissible group topology.
\end{definition}
By Theorem~\ref{adm-group-thm}, every interpretable group becomes an
admissible group in a unique way.
\begin{example}
  If $G$ is a definable group, then Pillay shows that $G$ admits a
  unique definable manifold topology \cite{Pillay-G-in-p}.  Definable
  manifolds are admissible, so Pillay's topology agrees with the
  unique admissible group topology on $G$.
\end{example}
\begin{example}
  The discrete topology on the value group $\Gamma$ is admissible
  (Example~\ref{gamma}).  This is necessarily the unique admissible
  group topology on $\Gamma$.
\end{example}
\begin{remark} \label{local-dim-remark}
  Let $G$ be an interpretable group, topologized with its unique
  admissible topology.  Let $n = \dim(G)$.  By
  Proposition~\ref{local-dim}, there is a point $a \in G$ at which the
  local dimension is $n$.  By translation symmetry, the local
  dimension is $n$ at every point.  As $G$ is locally Euclidean, we
  see that $G$ is locally homeomorphic to an open subset of $\Mm^n$ at
  any point.  As a consequence, the admissible group topology on $G$
  is discrete if and only if $\dim(G) = 0$.
\end{remark}

\subsection{Subgroups, quotients, and homomorphisms}
In this section, we analyze the topological properties of
interpretable homomorphisms.
\begin{proposition} \label{hom-cts}
  Let $f : G \to H$ be an interpretable homomorphism between two
  admissible groups.  Then $f$ is continuous.
\end{proposition}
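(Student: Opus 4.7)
The plan is to combine generic continuity (Proposition~\ref{gen-con}) with the translation invariance of the admissible group topologies on $G$ and $H$, in essentially the same spirit as the proof of Lemma~\ref{uniqueness-0}.

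First, apply Proposition~\ref{gen-con} to the interpretable map $f : G \to H$, where both $G$ and $H$ are admissible topological spaces by Theorem~\ref{adm-group-thm}. This yields a large open subset $X' \subseteq G$ on which $f$ is continuous; in particular, there is at least one point $a \in G$ at which $f$ is continuous.

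Next, I would bootstrap this single point of continuity to every point by using the group structure. Fix an arbitrary $g \in G$, and write
\begin{equation*}
f(x) = f(g a^{-1}) \cdot f(a g^{-1} x)
\end{equation*}
using that $f$ is a homomorphism. Let $c = f(g a^{-1}) \in H$. Define $\lambda : G \to G$ by $\lambda(x) = a g^{-1} x$ and $\rho_c : H \to H$ by $\rho_c(y) = c \cdot y$. Because $G$ and $H$ are topological groups under their admissible topologies, both $\lambda$ and $\rho_c$ are homeomorphisms. Then $f = \rho_c \circ f \circ \lambda$, the map $\lambda$ sends $g$ to $a$, and $f$ is continuous at $a$, so the composite is continuous at $g$. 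Since $g \in G$ was arbitrary, $f$ is continuous everywhere.

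There is no real obstacle: the hard work has already been done in establishing that the admissible group topology exists (Theorem~\ref{adm-group-thm}) and that interpretable maps between admissible spaces are generically continuous (Proposition~\ref{gen-con}). The argument here is just the standard trick of spreading continuity around a topological group by translation.
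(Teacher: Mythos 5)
Your proof is correct and is essentially the argument the paper intends: the paper's proof is literally "Similar to the proof of Lemma~\ref{uniqueness-0}," i.e., get one point of continuity from Proposition~\ref{gen-con} and spread it around by translations, exactly as you do. The identity $f(x) = f(ga^{-1})\cdot f(ag^{-1}x)$ checks out, and the only cosmetic quibble is that your $\rho_c$ is a left translation despite the notation.
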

\begin{proof}
  Similar to the proof of Lemma~\ref{uniqueness-0}.
\end{proof}
\begin{proposition} \label{closed-subgroup}
  Let $G$ be an interpretable group and let $H$ be an interpretable
  subgroup.  Let $\tau_G$ and $\tau_H$ be the admissible topologies in
  $G$ and $H$.
  \begin{enumerate}
  \item \label{cs1} $H$ is $\tau_G$-closed.
  \item \label{cs2} $\tau_H$ is the restriction of $\tau_G$.
  \end{enumerate}
\end{proposition}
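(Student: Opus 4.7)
The plan is to handle part (\ref{cs2}) first, using only the closure properties of admissibility and the uniqueness part of Theorem~\ref{adm-group-thm}, and then derive part (\ref{cs1}) from the frontier dimension inequality.

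For part (\ref{cs2}), the subspace topology $\tau_G \restriction H$ on $H$ is admissible by Proposition~\ref{subspaces}, since admissibility is preserved under interpretable subspaces. Moreover, the group operations on $H$ are restrictions of the $\tau_G$-continuous operations on $G$, so they are continuous in the subspace topology; hence $\tau_G \restriction H$ is an admissible group topology on $H$. The uniqueness half of Theorem~\ref{adm-group-thm} then forces $\tau_G \restriction H = \tau_H$.

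For part (\ref{cs1}), let $\overline{H}$ denote the $\tau_G$-closure of $H$. Since $G$ is a topological group, $\overline{H}$ is a subgroup of $G$ (a standard general fact). Suppose toward contradiction that $\overline{H} \supsetneq H$ and pick $g \in \overline{H} \setminus H$. The coset $gH$ is an interpretable set; it is disjoint from $H$, it is contained in $\overline{H}$ (because $\overline{H}$ is a subgroup), and it has $\dim(gH) = \dim H$ (as a translate of $H$). Thus $gH$ is an interpretable subset of $\partial H = \overline{H} \setminus H$ with $\dim(gH) = \dim H$. But Proposition~\ref{frontier-dimension}, applied to the interpretable subset $H$ of the admissible space $G$, gives $\dim \partial H < \dim H$, and this contradicts $\dim(gH) \le \dim \partial H$.

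The only point that requires a second glance is the interpretation of $\dim \partial H < \dim H$ when $\partial H$ is not itself obviously interpretable; but since the relevant witness $gH \subseteq \partial H$ is interpretable, the inequality forces $\dim(gH) < \dim H$, which is the contradiction needed. I do not expect any serious obstacle beyond this bookkeeping.
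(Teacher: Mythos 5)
Your proposal is correct and follows essentially the same route as the paper: part (\ref{cs2}) is proved identically (subspace topology is admissible by Proposition~\ref{subspaces}, operations restrict continuously, uniqueness forces equality), and for part (\ref{cs1}) your observation that $\overline{H}$ is a subgroup so that $\partial H$ contains a full coset $gH$ of dimension $\dim H$ is the same idea as the paper's remark that translation invariance makes $\partial H$ a union of cosets, both contradicting Proposition~\ref{frontier-dimension}. The worry about interpretability of $\partial H$ is unnecessary ($\overline{H}$, hence $\partial H = \overline{H}\setminus H$, is interpretable), but your witness $gH$ handles it cleanly in any case.
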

\begin{proof}
  \begin{enumerate}
  \item Because $\tau_G$ is invariant under translations, the
    $\tau_G$-frontier $\partial H$ is a union of cosets of $H$.
    Therefore, one of two things happens: $\partial H = \varnothing$
    (meaning that $H$ is $\tau_G$-closed), or $\dim(\partial H) \ge
    \dim(H)$ (contradicting Proposition~\ref{frontier-dimension}).
  \item The restriction $\tau_G \restriction H$ is admissible by
    Proposition~\ref{subspaces}.  The group operation $H \times H \to
    H$ is continuous with respect to $\tau_G \restriction H$, and so
    $(H,\tau_G \restriction H)$ is an admissible group.  By the
    uniqueness of the topology, $\tau_G \restriction H = \tau_H$. \qedhere
  \end{enumerate}
\end{proof}
\begin{corollary} \label{ce-cor}
  If $f : G \to H$ is an injective homomorphism of admissible groups,
  then $f$ is a closed embedding.
\end{corollary}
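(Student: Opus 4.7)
The plan is to assemble the corollary directly from the two preceding propositions together with Proposition~\ref{hom-cts}. Let $G' = f(G)$, which is an interpretable subgroup of $H$, and factor $f$ as
\begin{equation*}
G \xrightarrow{\bar f} G' \hookrightarrow H,
\end{equation*}
where $\bar f$ is a bijective interpretable homomorphism of interpretable groups.

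First I would dispatch the ``closed'' half: by Proposition~\ref{closed-subgroup}(\ref{cs1}), every interpretable subgroup of $H$ is closed in the admissible topology on $H$, so $G' = f(G)$ is closed. Moreover, Proposition~\ref{closed-subgroup}(\ref{cs2}) tells us that the admissible group topology on $G'$ coincides with the subspace topology inherited from $H$, so the inclusion $G' \hookrightarrow H$ is a topological embedding with closed image.

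Next I would show that $\bar f : G \to G'$ is a homeomorphism. Both $\bar f$ and its set-theoretic inverse $\bar f^{-1}$ are interpretable homomorphisms between admissible groups (equipping $G'$ with its unique admissible group topology from Theorem~\ref{adm-group-thm}), so by Proposition~\ref{hom-cts} both maps are continuous; hence $\bar f$ is an interpretable homeomorphism. Composing with the closed topological embedding $G' \hookrightarrow H$ gives the desired closed embedding $f : G \hookrightarrow H$.

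Because every ingredient is already established, there is no serious obstacle; the only point worth underlining is that the hypothesis ``admissible group topology'' must be used on $G'$, so that its subspace topology from $H$ agrees with the topology for which $\bar f^{-1}$ is known to be continuous via Proposition~\ref{hom-cts}. That matching of topologies, provided by Proposition~\ref{closed-subgroup}(\ref{cs2}), is what turns the abstract continuous bijection $\bar f$ into a homeomorphism onto a closed subspace of $H$.
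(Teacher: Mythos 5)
Your argument is correct and is exactly the derivation the paper intends: the corollary is stated without proof immediately after Proposition~\ref{closed-subgroup}, and the intended reading is precisely your factorization through $f(G)$ using Proposition~\ref{closed-subgroup}(\ref{cs1})--(\ref{cs2}) together with Proposition~\ref{hom-cts} applied to $\bar f$ and its (interpretable) inverse. Nothing is missing.
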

\begin{proposition} \label{open-subgroup}
  Let $G$ be an admissible group and $H$ be an interpretable subgroup.
  Then $H$ is open in the admissible topology on $G$ if and only if
  $\dim(H) = \dim(G)$.
\end{proposition}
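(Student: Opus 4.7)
The plan is to prove both directions using the frontier-dimension inequality from Proposition~\ref{frontier-dimension} together with the translation-invariance of the admissible group topology.

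For the forward direction, suppose $H$ is open in $G$. Then $H$ is a neighborhood of any of its points, so for any $p \in H$ we have $\dim_p(H) = \dim_p(G)$. By Remark~\ref{local-dim-remark}, $\dim_p(G) = \dim(G)$ at every point $p$, so $\dim(H) \ge \dim_p(H) = \dim(G)$, forcing equality.

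For the backward direction, suppose $\dim(H) = \dim(G) = n$. By Proposition~\ref{closed-subgroup}(\ref{cs1}), $H$ is closed in $G$. The key observation is that the interior $H^\circ$, being a $\tau_G$-open subset of $H$, is invariant under left-multiplication by elements of $H$: left-multiplication by $h \in H$ is a $\tau_G$-homeomorphism sending $H$ to itself, hence sending $H^\circ$ to itself. Therefore, if $H^\circ$ is non-empty and contains some $x$, then $H \cdot x \subseteq H^\circ$; since $x \in H$ this gives $H \subseteq H^\circ$, so $H$ is open.

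It thus suffices to show $H^\circ \ne \varnothing$. Suppose for contradiction $H^\circ = \varnothing$. Then every $p \in H$ has the property that every neighborhood of $p$ meets $G \setminus H$, so $H \subseteq \overline{G \setminus H}$. Combined with $H$ being closed, this yields $H \subseteq \overline{H} \cap \overline{G \setminus H} = \bd(H)$. Proposition~\ref{frontier-dimension} then gives $\dim(H) \le \dim(\bd(H)) < \dim(G)$, contradicting our assumption. The main ingredient is really the frontier-dimension inequality; once that is in hand, the translation-invariance argument for promoting non-empty interior to openness is formal, so I expect no serious obstacles.
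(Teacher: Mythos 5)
Your proof is correct and follows essentially the same route as the paper's: the forward direction via the constancy of local dimension (Remark~\ref{local-dim-remark}), and the backward direction by showing empty interior would force $H \subseteq \bd(H)$, contradicting Proposition~\ref{frontier-dimension}. You merely spell out two steps the paper leaves implicit (the translation argument promoting non-empty interior to openness, and the containment $H \subseteq \bd(H)$, where in fact the appeal to closedness of $H$ is not even needed since $H \subseteq \overline{H}$ always holds).
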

\begin{proof}
  $H$ is open if and only if $H$ has non-empty interior (as a subset
  of $G$).  If $\dim(H) = \dim(G)$, then $H$ has non-empty interior by
  Proposition~\ref{frontier-dimension}.  Conversely, suppose $H$ has
  non-empty interior.  By Remark~\ref{local-dim-remark}, $\dim_x(G) =
  \dim(G)$ for all $x \in G$.  Taking $x$ in the interior of $H$, we
  see $\dim(H) = \dim(G)$.
\end{proof}

\begin{proposition}\label{quotient-prop}
  Let $G$ be an admissible group and let $H$ be an interpretable
  normal subgroup.
  \begin{enumerate}
  \item \label{qp3} The quotient topology on $G/H$ agrees with the unique
    admissible group topology on $G/H$.
  \item The map $G \to G/H$ is an open map.
  \end{enumerate}
\end{proposition}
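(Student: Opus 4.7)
The plan is to prove both parts simultaneously by showing that the quotient topology $\tau_q$ on $G/H$ is itself an admissible group topology; then by uniqueness (Theorem~\ref{adm-group-thm}) it must coincide with the canonical admissible group topology on $G/H$, and openness of $\pi$ will fall out as a byproduct.

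First, I would verify that the coset equivalence relation $E_H = \{(x,y) \in G^2 : xH = yH\}$ is an OQ equivalence relation (Definition~\ref{def-oq}). For any $\tau_G$-open $U \subseteq G$, the $E_H$-saturation equals $UH = \bigcup_{h \in H} Uh$, and each right translate $Uh$ is open because $G$, being admissible, is a topological group. Hence $U^{E_H}$ is open, so $E_H$ is OQ. Fact~\ref{why-open} then gives that $\tau_q$ is an interpretable topology on $G/H$, and by construction $\pi : G \to (G/H, \tau_q)$ is an interpretable continuous surjective open map.

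Next I would check the three hypotheses of Lemma~\ref{upgrade} for $\tau_q$. For Hausdorffness, note that $H$ is $\tau_G$-closed by Proposition~\ref{closed-subgroup}(\ref{cs1}); since the map $m(x,y) = x^{-1}y$ is continuous on $G^2$, the graph of $E_H$ equals $m^{-1}(H)$ and is closed. The product map $\pi \times \pi : G^2 \to (G/H)^2$ is open (a product of open maps), so it sends the complement of $\operatorname{graph}(E_H)$ to the complement of the diagonal in $(G/H)^2$, which is therefore open. For definable domination, strong admissibility of $G$ (Theorem~\ref{adm-group-thm}) provides a definable manifold $Y$ and an interpretable surjective open map $Y \to G$; composing with $\pi$ yields an interpretable surjective open map $Y \to (G/H, \tau_q)$. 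For left-invariance, left translation by $g$ on $G$ descends through $\pi$ to left translation by $gH$ on $G/H$, and since $\pi$ is a topological quotient map and left translation on $G$ is a homeomorphism, left translation on $G/H$ is a homeomorphism.

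Lemma~\ref{upgrade} then upgrades $\tau_q$ to an admissible group topology, which by Theorem~\ref{adm-group-thm} must agree with the unique admissible group topology on $G/H$. This yields part (\ref{qp3}), and part (2) is the openness of $\pi$ already established in the first step. The only mildly delicate ingredient is the OQ verification, which is where the topological group structure of $G$ enters in an essential way; the remainder of the argument is an assembly of previously established results.
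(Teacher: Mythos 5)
Your proof is correct, and its skeleton (show the coset relation is OQ, so the quotient topology is interpretable and $\pi$ is open; show the quotient topology is an admissible group topology; conclude by uniqueness via Theorem~\ref{adm-group-thm}) matches the paper's. The two middle steps are handled differently, though. The paper verifies by hand that the quotient topology is a group topology (continuity of $(x,y)\mapsto xy^{-1}$, using lifts and the openness of $\pi$), and only then deduces Hausdorffness from the closedness of $H$ (Proposition~\ref{closed-subgroup}) together with the group-topology axioms. You instead prove Hausdorffness directly via the standard closed-graph criterion for open quotient maps --- the graph of $E_H$ is $m^{-1}(H)$ for $m(x,y)=x^{-1}y$, hence closed, and $\pi\times\pi$ is open --- and then outsource the group-topology axioms entirely to Lemma~\ref{upgrade}, which only needs left-invariance and definable domination. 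Your route is arguably more economical, since Lemma~\ref{upgrade} was built precisely to convert ``left-invariant $+$ definably dominated'' into ``admissible group topology,'' and it spares you the explicit $\epsilon$--$\delta$-style continuity check on the quotient; the cost is that Lemma~\ref{upgrade} silently invokes the generic-continuity machinery (Proposition~\ref{gen-con}) a second time, whereas the paper's hand verification is self-contained at that point. Both arguments are complete; your OQ verification via saturations $U^{E_H}=UH=\bigcup_{h\in H}Uh$ is just a rephrasing of the paper's check of criterion (\ref{cfo4}) of Remark~\ref{criterion-for-oer}.
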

\begin{proof}
  Let $E$ be the equivalence relation $xEy \iff xH = yH$.
  \begin{claim}
    $E$ is an OQ equivalence relation on $G$ (Definition~\ref{def-oq}).
  \end{claim}
  \begin{claimproof}
    We verify criterion (\ref{cfo4}) of Remark~\ref{criterion-for-oer}.
    Suppose $g_1, g_2 \in G$ are $E$-equivalent, and $B$ is a
    neighborhood of $g_1$.  Take $h \in H$ such that $g_1 \cdot h =
    g_2$.  Then $B \cdot h$ is a neighborhood of $g_2$, and every
    element of $B \cdot h$ is $E$-equivalent to an element of $B$.
  \end{claimproof}
  Consider $G/H$ with the quotient topology $\tau$.  By the claim, $G
  \to G/H$ is an open map.  By Fact~\ref{why-open}, $\tau$ is
  interpretable.  It remains to show that $\tau$ is the admissible
  group topology on $G/H$.
  \begin{claim}
    $\tau$ is a group topology.
  \end{claim}
  \begin{claimproof}
    We claim $(x,y) \mapsto x \cdot y^{-1}$ is continuous with respect
    to $\tau$.  Fix $a,b \in G/H$.  Take lifts $\tilde{a}, \tilde{b}
    \in G$.  Let $U$ be a neighborhood of $a \cdot b^{-1}$ in $G/H$.
    Let $\pi : G \to G/H$ be the quotient map.  Then $\pi^{-1}(U)$ is
    an open neighborhood of $\tilde{a} \cdot \tilde{b}^{-1}$ in $G$.
    By continuity of the group operations on $G$, there are open
    neighborhoods $W \ni \tilde{a}$ and $V \ni \tilde{b}$ in $G$ such
    that $W \cdot V^{-1} \subseteq \pi^{-1}(U)$, in the sense that
    \begin{equation*}
      x \in W, ~ y \in V \implies x \cdot y^{-1} \in \pi^{-1}(U).
    \end{equation*}
    As $\pi$ is an open map, $\pi(W)$ and $\pi(V)$ are open
    neighborhoods of $a$ and $b$.  Then $\pi(W) \cdot \pi(V)^{-1} =
    \pi(V \cdot W^{-1}) \subseteq \pi(\pi^{-1}(U)) = U$.  This shows
    continuity at $(a,b)$.
  \end{claimproof}
  By Proposition~\ref{closed-subgroup}(\ref{cs1}), $H$ is closed,
  which implies $\{1\} \subseteq G/H$ is closed in $\tau$ by
  definition of the quotient topology.  As $\tau$ is a group topology,
  it follows that $\tau$ is Hausdorff.  Then the open map $G \to G/H$
  shows that $\tau$ is definably dominated, by
  Lemma~\ref{dom-trans}(\ref{dt1}).  Finally, $\tau$ is the admissible
  group topology on $G/H$ by Theorem~\ref{adm-group-thm}.
\end{proof}

\begin{corollary} \label{o-cor}
  Let $f : G \to H$ be a surjective interpretable homomorphism between
  two admissible groups.  Then $f$ is an open map.
\end{corollary}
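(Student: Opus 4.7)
The plan is to factor $f$ through the quotient by its kernel and then apply the results about quotient maps and injective homomorphisms already in hand. Let $K = \ker(f)$, which is an interpretable normal subgroup of $G$. The first isomorphism theorem gives an interpretable factorization
\begin{equation*}
G \xrightarrow{\pi} G/K \xrightarrow{\bar f} H,
\end{equation*}
where $\pi$ is the quotient map and $\bar f$ is an interpretable bijective homomorphism.

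First I would note that $\pi$ is an open map by Proposition~\ref{quotient-prop}(\ref{qp3}) (more precisely by the second clause of Proposition~\ref{quotient-prop}), where we equip $G/K$ with its unique admissible group topology, which agrees with the quotient topology. Next, $\bar f$ is continuous by Proposition~\ref{hom-cts}, and since it is injective, Corollary~\ref{ce-cor} says $\bar f$ is a closed embedding. Because $f$ is surjective, $\bar f$ is also surjective, so $\bar f$ is a continuous closed bijection between admissible groups, hence a homeomorphism.

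Finally, $f = \bar f \circ \pi$ is the composition of an open map and a homeomorphism, so $f$ is open. The only step that required any real content was Proposition~\ref{quotient-prop}; the rest is a clean assembly of previously established facts, so I would not expect any obstacle beyond verifying that the factorization through $G/K$ is indeed interpretable (which is immediate since $K = f^{-1}(1)$ is interpretable and the induced map $\bar f$ is definable from the graph of $f$).
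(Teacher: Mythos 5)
Your proof is correct and is essentially the argument the paper intends: Corollary~\ref{o-cor} is stated as an immediate consequence of Proposition~\ref{quotient-prop} (with no written proof), and the implicit argument is exactly your factorization $G \to G/\ker(f) \to H$, using openness of the quotient map plus the fact that the induced interpretable bijective homomorphism is a homeomorphism (by Proposition~\ref{hom-cts} applied in both directions, or via Corollary~\ref{ce-cor} as you do).
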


\section{Definable compactness in strongly admissible spaces} \label{def-com-sec}
Recall definable compactness from
Definition~\ref{fornasiero-definition}.  Let $X$ be an interpretable
topological space in a $p$-adically closed field $M$ with value group
$\Gamma$.
\begin{definition}{{\cite[Definition~2.6]{johnson-yao}}}
  A \emph{$\Gamma$-exhaustion} of $X$ is an interpretable family
  $\{X_\gamma\}_{\gamma \in \Gamma}$, such that
  \begin{itemize}
  \item Each $X_\gamma$ is a definably compact, clopen subset of $X$.
  \item If $\gamma \le \gamma'$, then $X_\gamma \subseteq
    X_{\gamma'}$.
  \item $X = \bigcup_{\gamma \in \Gamma} X_\gamma$.
  \end{itemize}
\end{definition}
\begin{proposition} \label{gx-purpose}
  Let $\{X_\gamma\}_{\gamma \in \Gamma}$ be a $\Gamma$-exhaustion of
  an interpretable topological space $X$.  Then $X$ is definably
  compact if and only if $X = X_\gamma$ for some $\gamma \in \Gamma$.
\end{proposition}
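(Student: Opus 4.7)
The plan is to prove this by a direct argument in both directions, with the nontrivial direction being a straightforward application of the definition of definable compactness applied to the complementary family.

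For the easy direction, suppose $X = X_\gamma$ for some $\gamma \in \Gamma$. Then $X$ is definably compact because it equals one of the members of the $\Gamma$-exhaustion, each of which is definably compact by hypothesis.

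For the nontrivial direction, I would argue by contradiction. Suppose $X$ is definably compact but $X \neq X_\gamma$ for every $\gamma \in \Gamma$. Consider the interpretable family $\{C_\gamma\}_{\gamma \in \Gamma}$ defined by $C_\gamma = X \setminus X_\gamma$. Each $C_\gamma$ is a closed subset of $X$ because $X_\gamma$ is clopen, and each $C_\gamma$ is non-empty by the assumption that $X \neq X_\gamma$. The family is downwards-directed: given $\gamma_1, \gamma_2 \in \Gamma$, pick $\gamma \ge \gamma_1, \gamma_2$, then by the monotonicity clause of the $\Gamma$-exhaustion, $X_{\gamma_1} \cup X_{\gamma_2} \subseteq X_\gamma$, hence $C_\gamma \subseteq C_{\gamma_1} \cap C_{\gamma_2}$.

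By definable compactness of $X$, the intersection $\bigcap_{\gamma \in \Gamma} C_\gamma$ is non-empty. But
\begin{equation*}
  \bigcap_{\gamma \in \Gamma} C_\gamma = X \setminus \bigcup_{\gamma \in \Gamma} X_\gamma = X \setminus X = \varnothing,
\end{equation*}
using the covering clause of the definition of $\Gamma$-exhaustion. This is a contradiction, so $X = X_\gamma$ for some $\gamma$. No part of this argument appears to be a serious obstacle; the only subtlety is verifying that the family $\{C_\gamma\}$ is interpretable and downwards-directed, which follows directly from the definition of $\Gamma$-exhaustion.
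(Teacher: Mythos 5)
Your proof is correct and takes essentially the same route as the paper: both apply the definition of definable compactness to the downwards-directed family of closed complements $\{X \setminus X_\gamma\}_{\gamma \in \Gamma}$, which has empty intersection, to conclude that some member must be empty. The only difference is that you phrase it as a proof by contradiction while the paper states it directly; the content is identical.
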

\begin{proof}
  Each $X_\gamma$ is definably compact, so if $X = X_\gamma$ then $X$
  is definably compact.  Conversely, suppose $X$ is definably compact.
  Then the family $\{X \setminus X_\gamma\}_{\gamma \in \Gamma}$ is a
  downward-directed family of closed sets with empty intersection.  By
  definable compactness, some $X \setminus X_\gamma$ must vanish.
\end{proof}
\begin{lemma} \phantomsection \label{zhang-yao}
  \begin{enumerate}
  \item Let $f : X \to Y$ be an interpretable surjective
    open map between Hausdorff interpretable topological spaces.
    Suppose $\{X_\gamma\}_{\gamma \in \Gamma}$ is a
    $\Gamma$-exhaustion of $X$.  Let $Y_\gamma = f(X_\gamma)$ for each
    $\gamma$.  Then $\{Y_\gamma\}_{\gamma \in \Gamma}$ is a
    $\Gamma$-exhaustion of $Y$.
  \item \label{zy2} If $X$ is manifold dominated, then $X$ has a
    $\Gamma$-exhaustion.
  \end{enumerate}
\end{lemma}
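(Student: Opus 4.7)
\textbf{Part (1)} is a routine four-point check. Set $Y_\gamma := f(X_\gamma)$. It is definably compact by Fact~\ref{dc-fact}(6) (continuous image of definably compact); open in $Y$ because $X_\gamma$ is open and $f$ is an open map; closed in $Y$ because $Y$ is Hausdorff and $Y_\gamma$ is definably compact (Fact~\ref{dc-fact}(3)); monotonic by direct inheritance from $\{X_\gamma\}$; and $\bigcup_\gamma Y_\gamma = Y$ by surjectivity of $f$.

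\textbf{Part (2).} Given an interpretable surjective open map $g : Z \to X$ with $Z$ a definable manifold, part (1) reduces the task to producing a $\Gamma$-exhaustion of $Z$. The key input is a $\Gamma$-exhaustion of an arbitrary definable open subset $V \subseteq \Mm^n$, for which I propose
\[
  V_\gamma := \bigl\{x \in V : \min_i \val(x_i) \ge -\gamma \ \text{and}\ \val(x-y) \le \gamma \text{ for all } y \in \Mm^n \setminus V \bigr\}.
\]
The set $V_\gamma$ is closed and bounded in $\Mm^n$, hence definably compact by Fact~\ref{dc-fact-pcf}(1). That it is also \emph{open} in $\Mm^n$ uses that $\Gamma$ is a $\Zz$-group in $\pCF$, so $\alpha \ge \beta$ is equivalent to $\alpha > \beta - 1$ and non-strict valuation inequalities cut out the same sets as strict ones, which in the valuation topology are open. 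The second condition reformulates as ``the open ball of fixed radius determined by $\gamma$ around $x$ is contained in $V$'', and since $p$-adic balls of fixed radius partition $\Mm^n$, this condition is invariant on each such ball, hence clopen. Monotonicity and $\bigcup_\gamma V_\gamma = V$ are immediate from the fact that every point of $V$ is bounded and (since $V$ is open) has positive valuation-distance to $\Mm^n \setminus V$.

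To globalize to $Z$, fix a finite atlas $Z = U_1 \cup \cdots \cup U_m$ with open embeddings $\phi_i : U_i \to \Mm^{n_i}$ onto open definable subsets, apply the above construction inside each $\phi_i(U_i)$ to obtain $V_{i,\gamma}$, and set $Z_\gamma := \bigcup_{i=1}^m \phi_i^{-1}(V_{i,\gamma})$. Each $\phi_i^{-1}(V_{i,\gamma})$ is open in $Z$ (as an open subset of the open set $U_i \subseteq Z$), definably compact via the homeomorphism $\phi_i$, and therefore closed in $Z$ by Fact~\ref{dc-fact}(3). Finite unions of definably compact clopens are definably compact (Fact~\ref{dc-fact}(5)) and clopen, so $\{Z_\gamma\}_{\gamma \in \Gamma}$ is a $\Gamma$-exhaustion of $Z$. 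Applying part (1) to $g : Z \to X$ then produces the desired $\Gamma$-exhaustion of $X$.

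\textbf{Main obstacle.} The one non-formal step is the construction of $V_\gamma$ and the verification that it is clopen in $\Mm^n$: this is exactly where the particular geometry of $\pCF$ enters, through the $\Zz$-group structure of $\Gamma$ and the clopen nature of $p$-adic balls. Once that is settled, the remainder of the proof is bookkeeping with Fact~\ref{dc-fact} and Fact~\ref{dc-fact-pcf}.
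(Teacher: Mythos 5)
Your proof is correct, and its overall architecture is the same as the paper's: part (1) is the same routine four-point check, and part (2) reduces to exhibiting a $\Gamma$-exhaustion of a definable manifold and then pushing it forward along the dominating map via part (1). The one place you diverge is that the paper simply cites \cite[Proposition~2.8]{johnson-yao} for the existence of a $\Gamma$-exhaustion on a definable manifold, whereas you construct one from scratch. Your construction is sound: for open definable $V \subseteq \Mm^n$, the condition ``$\val(x-y) \le \gamma$ for all $y \in \Mm^n \setminus V$'' is exactly ``the ball $\{z : \val(x-z) > \gamma\}$ is contained in $V$,'' which is constant on each such ball and hence clopen, while the boundedness condition cuts out a clopen polydisc; so $V_\gamma$ is clopen, closed and bounded, hence definably compact by Fact~\ref{dc-fact-pcf}(\ref{dfp1}), and the family is visibly interpretable, monotone, and exhaustive since $V$ is open. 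The globalization over a finite atlas and the final appeal to part (1) are then the same bookkeeping the paper does. So the only real difference is self-containedness: you re-prove the cited input, at the cost of a page of $p$-adic ball manipulation that the paper outsources.
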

\begin{proof}
  \begin{enumerate}
  \item Each $Y_\gamma$ is open because $f$ is an open map.  Each
    $Y_\gamma$ is definably compact as the image of a definably
    compact set under a definable map.  Each $Y_\gamma$ is closed
    because $Y$ is Hausdorff.  We have $\bigcup_\gamma Y_\gamma =
    \bigcup_\gamma f(X_\gamma) = f(X) = Y$, because $f$ is surjective.
    It is clear that if $\gamma \le \gamma'$, then $Y_{\gamma} =
    f(X_{\gamma}) \subseteq f(X_{\gamma'}) = Y_{\gamma'}$.
  \item Take a definable manifold $U$ and an interpretable surjective
    open map $f : U \to X$.  By \cite[Proposition~2.8]{johnson-yao}, $U$
    admits a $\Gamma$-exhaustion.  By the first point, we can push
    this forward to a $\Gamma$-exhaustion on $X$. \qedhere
  \end{enumerate}
\end{proof}
The idea of Lemma~\ref{zhang-yao} came out of discusisons with
Ningyuan Yao and Zhentao Zhang.
\begin{remark}
  Say that an interpretable topological space $X$ is ``locally
  definably compact'' if for every $p \in X$, there is a definably
  compact subspace $Y \subseteq X$ such that $p$ is in the interior of
  $Y$.  Lemma~\ref{zhang-yao} shows that any manifold dominated space
  $X$ is locally definably compact.  Indeed, take a
  $\Gamma$-exhaustion $\{X_\gamma\}_{\gamma \in \Gamma}$.  If $p \in
  X$, then there is $\gamma \in \Gamma$ such that $p \in X_\gamma$,
  and we can take $Y = X_\gamma$.
\end{remark}
\begin{example} \label{impl-2}
  Let $D \subseteq \Mm^2$ be the definable set $\{(x,y) \in \Mm^2 : x
  = 0 ~ \vee ~ y \ne 0\}$ from Example~\ref{impl-1}.  Using
  Fact~\ref{dc-fact-pcf}(\ref{dfp1}), one can see that if $N$ is a
  neighborhood of $(0,0)$ in $D$, and $\overline{N}$ is the closure of
  $(0,0)$ in $D$, then $\overline{N}$ is not definably compact.  In
  other words, local definable compactness fails at $(0,0) \in D$.
  Therefore $D$ is not manifold dominated, and not strongly admissible
  (though it is admissible, trivially).
\end{example}

\begin{theorem} \label{definability-annoy}
  Definable compactness is a definable property, on families of
  strongly admissible spaces, and families of interpretable groups.
  More precisely:
  \begin{enumerate}
  \item \label{da1} If $\{(X_i,\tau_i)\}_{i \in I}$ is an interpretable family of strongly
    admissible spaces, then the set $\{i \in I : (X_i,\tau_i) \text{ is
      definably compact}\}$ is an interpretable subset of $I$.
  \item \label{da2} If $\{G_i\}_{i \in I}$ is an interpretable family of groups,
    then the set \[\{i \in I : G_i \text{ is definably compact with
      respect to the admissible group topology}\}\] is an interpretable
    subset of $I$.
  \end{enumerate}
\end{theorem}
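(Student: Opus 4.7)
The strategy is to reduce both parts to Proposition~\ref{gx-purpose}: for a manifold-dominated space $X$ equipped with a $\Gamma$-exhaustion $\{X_\gamma\}_{\gamma \in \Gamma}$, definable compactness is equivalent to $X = X_\gamma$ for some $\gamma \in \Gamma$. Once a uniform $\Gamma$-exhaustion $\{X_{i,\gamma}\}_{(i,\gamma) \in I \times \Gamma}$ is exhibited across the family, the set of definably compact fibers becomes the projection to $I$ of the interpretable set $\{(i,\gamma) \in I \times \Gamma : X_i = X_{i,\gamma}\}$, which is interpretable.

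For part (\ref{da1}), given the family $\{(X_i,\tau_i)\}_{i \in I}$ of strongly admissible spaces, I would first produce a uniform manifold domination $\{f_i : U_i \to X_i\}_{i \in I}$, where $\{U_i\}$ is an interpretable family of definable manifolds. The plan is to carry out the constructions of Section~\ref{ss-construct} in a family-wise manner: presenting each $X_i$ as a quotient $D_i/E_i$ of a definable set by a definable equivalence relation (uniformly in $i$) and applying the family version of Theorem~\ref{thm-pi} gives, after iteration to cover the complement of the resulting large open subset, a uniform manifold domination. Then, as in Lemma~\ref{zhang-yao}(\ref{zy2}), pulling back the uniform $\Gamma$-exhaustion on $\{U_i\}$ (from the family form of \cite[Proposition~2.8]{johnson-yao}) along the $f_i$ gives the required $\{X_{i,\gamma}\}$, and the argument concludes as in the previous paragraph.

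For part (\ref{da2}), given the family $\{G_i\}_{i \in I}$ of interpretable groups, I would apply the construction of the admissible group topology (Theorem~\ref{adm-group-thm} via Proposition~\ref{construction-2}) uniformly. Each step of Proposition~\ref{construction-2}---selecting an initial strongly admissible topology via Theorem~\ref{construction-1}, locating the large open subset $U \subseteq G$ on which translations act $\tau_0$-continuously, and forming the quotient $(G \times U)/E$---is a definable-in-families construction thanks to Proposition~\ref{dim-definable-omega} and Fact~\ref{def-defable}. This produces along the way a uniform manifold domination $\{U_i \to G_i\}$, and from that point part (\ref{da2}) reduces to part (\ref{da1}).

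The principal obstacle is verifying that the constructions in Sections~\ref{ss-construct} and \ref{sec:ex} are genuinely uniform in $i$, rather than merely carried out fiberwise. This reduces to the fact that the dimension-theoretic and topological operations involved (closures, interiors, frontiers, codes of definable subsets, and dimension itself) all behave uniformly in $\pCF$, as captured by Fact~\ref{def-defable} and Proposition~\ref{dim-definable-omega}. A careful but essentially routine bookkeeping through each step of the proofs in Sections~\ref{ss-construct}--\ref{sec:ex} should suffice; the subtlety lies in tracking parameters, not in any new mathematical content.
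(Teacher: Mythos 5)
There is a genuine gap in your part (\ref{da1}). You propose to obtain a uniform manifold domination of the given spaces $(X_i,\tau_i)$ by presenting each $X_i$ as $D_i/E_i$ and re-running the construction of Section~\ref{ss-construct} (Theorem~\ref{thm-pi}) in families. But that construction produces \emph{some} strongly admissible quotient topology on (a large piece of) $D_i/E_i$, not the topology $\tau_i$ you were handed: strongly admissible topologies on a fixed interpretable set are far from unique (e.g.\ $\Mm$ carries both the standard topology and the topology making $\{0\}$ clopen), so the resulting maps need not be open or even continuous for $\tau_i$, and the $\Gamma$-exhaustions you extract exhaust the wrong space. To make the $\Gamma$-exhaustion route work you must uniformize the \emph{given} witnesses of manifold domination of $(X_i,\tau_i)$, which is a compactness/saturation argument over $I$ (finitely many formulas of bounded complexity suffice to witness domination across the family); this is the ``method of \cite[Proposition~4.1]{johnson-fsg}'' that the paper alludes to before giving a different proof. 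Your closing paragraph treats uniformity as routine bookkeeping, but the bookkeeping as you set it up does not close this hole.

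The paper's actual proof avoids all of this. For (\ref{da1}) it shows both $S=\{i : X_i \text{ definably compact}\}$ and $I\setminus S$ are $\vee$-definable and concludes by saturation: membership in $S$ is witnessed by a closed bounded definable $D$ and a continuous interpretable surjection $D\to X_i$ (Lemma~\ref{trickery} for one direction, Fact~\ref{dc-fact} for the other), while membership in $I\setminus S$ is witnessed by a downward-directed interpretable family of non-empty closed sets with empty intersection. For (\ref{da2}) it does \emph{not} construct the admissible topology uniformly; instead it exploits uniqueness (Theorem~\ref{adm-group-thm}) by existentially quantifying over all candidate interpretable Hausdorff group topologies $\tau$ together with a definable $D$ and a surjective open map $D\to G_i$ --- any such $\tau$ must be the admissible one --- and then applies part (\ref{da1}). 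Your part (\ref{da2}) correctly senses that uniqueness is the key, but running Proposition~\ref{construction-2} (and the dimension induction of Theorem~\ref{construction-1}) uniformly in families is a substantial undertaking that you have not carried out, whereas quantifying over candidates makes the uniqueness do all the work for free.
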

This follows formally from Proposition~\ref{gx-purpose} and
Lemma~\ref{zhang-yao}, using the method of
\cite[Proposition~4.1]{johnson-fsg}.  However, we can give a cleaner
proof, using the following lemma.
\begin{lemma} \label{trickery}
  Let $X$ be strongly admissible and definably compact.  Then there is
  a closed and bounded definable set $D \subseteq \Mm^n$ and a
  continuous interpretable surjection $f : D \to X$.
\end{lemma}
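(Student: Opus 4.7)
The plan is to obtain $f$ by restricting a manifold-dominating map to a single sublevel of a $\Gamma$-exhaustion, with the sublevel chosen so that definable compactness of $X$ forces it to already be surjected onto all of $X$.

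First, I would unwind strong admissibility: by definition there is a definable manifold $Y$ together with an interpretable surjective open map $g : Y \to X$. As in the proof that manifold dominated spaces are definably dominated, the finitely many charts of $Y$ assemble into a disjoint union which is interpretably homeomorphic to a definable subset $Y' \subseteq \Mm^N$ for some $N$; composing with $g$ yields an interpretable surjective open map (which I still call $g$) from a \emph{definable} subset $Y' \subseteq \Mm^N$ onto $X$. This reduction is the only place that requires care, but it is the standard disjoint-union-of-charts trick already used several times in the paper.

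Next I would apply Lemma~\ref{zhang-yao}. Since $Y'$ is itself a definable manifold, part~(\ref{zy2}) produces a $\Gamma$-exhaustion $\{Y'_\gamma\}_{\gamma \in \Gamma}$ of $Y'$; then the first part of the lemma pushes this forward along $g$ to a $\Gamma$-exhaustion $\{X_\gamma\}_{\gamma \in \Gamma} := \{g(Y'_\gamma)\}_{\gamma \in \Gamma}$ of $X$. Because $X$ is definably compact, Proposition~\ref{gx-purpose} supplies some $\gamma_0 \in \Gamma$ with $X = X_{\gamma_0} = g(Y'_{\gamma_0})$.

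Finally, set $D := Y'_{\gamma_0}$ and $f := g \restriction D$. Since $Y'_{\gamma_0}$ is a definably compact definable subset of $\Mm^N$, Fact~\ref{dc-fact-pcf}(\ref{dfp1}) gives that $D$ is closed and bounded in $\Mm^N$. The restriction $f : D \to X$ is interpretable and continuous (as a restriction of a continuous interpretable map) and surjective (as $g(D) = X_{\gamma_0} = X$), which is exactly the conclusion. I do not anticipate any genuine obstacle: the argument is essentially a bookkeeping exercise combining manifold domination, the $\Gamma$-exhaustion machinery of Lemma~\ref{zhang-yao}, and the closed-and-bounded characterization of definable compactness in $\pCF$.
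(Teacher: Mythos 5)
Your proposal is correct and follows essentially the same route as the paper: pass to the disjoint union of charts to get a definable domination by a definable set, take a $\Gamma$-exhaustion via Lemma~\ref{zhang-yao}, use Proposition~\ref{gx-purpose} to find a single level that already surjects onto $X$, and conclude closedness and boundedness from Fact~\ref{dc-fact-pcf}(\ref{dfp1}). The only cosmetic difference is the order of operations (the paper exhausts each chart separately and then forms the disjoint union of the $\gamma_0$-levels, whereas you form the disjoint union first and then exhaust), which changes nothing.
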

\begin{proof}
  Take $Y$ a definable manifold and $g : Y \to X$ an interpretable
  surjective open map.  Take $U_1, \ldots, U_m \subseteq Y$
  an open cover and interpretable open embeddings $h_i : U_i \to
  \Mm^{n_i}$.  Let $\{U_{i,\gamma}\}_{\gamma \in \Gamma}$ be a
  $\Gamma$-exhaustion of $U_i$ for each $i$.  Then $\left\{\bigcup_{i
    = 1}^m U_{i,\gamma} \right\}_{\gamma \in \Gamma}$ is a
  $\Gamma$-exhaustion of $Y$, and $\left\{ f\left( \bigcup_{i = 1}^m
  U_{i,\gamma} \right) \right\}_{\gamma \in \Gamma}$ is a
  $\Gamma$-exhaustion of $X$ by Lemma~\ref{zhang-yao}.  As $X$ is
  definably compact, there is some $\gamma_0 \in \Gamma$ such that $f
  \left( \bigcup_{i = 1}^m U_{i,\gamma_0} \right) = X$ by
  Proposition~\ref{gx-purpose}.

  Each set $U_{i,\gamma_0}$ is homeomorphic to a definable subset of
  $\Mm^{n_i}$, via the embeddings $g_i$.  Then the disjoint union
  $U_{1,\gamma_0} \sqcup \cdots \sqcup U_{m,\gamma_0}$ is homeomorphic
  to a definable set $D \subseteq \Mm^n$ for sufficiently large $n$.
  The natural map
  \begin{equation*}
    U_{1,\gamma_0} \sqcup \cdots \sqcup U_{m,\gamma_0} \to \bigcup_{i
      = 1}^m U_{i,\gamma_0} \hookrightarrow Y \to X
  \end{equation*}
  is surjective by choice of $\gamma_0$.  So there is a continuous
  surjection $D \to X$.  Finally, $D$ is homeomorphic to the disjoint
  union $U_{1,\gamma_0} \sqcup \cdots \sqcup U_{m,\gamma_0}$, which is
  definably compact, and so $D$ is closed and bounded by
  Fact~\ref{dc-fact-pcf}(\ref{dfp1}).
\end{proof}
Using this we can prove Theorem~\ref{definability-annoy}
\begin{proof}[Proof (of Theorem~\ref{definability-annoy})]
  \begin{enumerate}
  \item Let $S$ be the set of $i \in I$ such that $(X_i,\tau_i)$ is
    definably compact.  It suffices to show that $S$ and $I \setminus
    S$ are $\vee$-definable, i.e., small unions of $M_0$-definable
    sets.  By Lemma~\ref{trickery}, $i \in S$ if and only if there is
    a definable set $D$ and a surjective continuous interpretable map
    $D \to X_i$.  It is easy to see that the set of such $i$ is
    $\vee$-definable.  Meanwhile, by our definition of definable
    compactness, $i \notin S$ if and only if there is a
    downward-directed interpretable family $\{F_j\}_{j \in J}$ of
    non-empty closed subsets of $X_i$ such that $\bigcap_{j \in J} F_j
    = \varnothing$.  Again, the set of such $i$ is $\vee$-definable.
  \item Let $S$ be the set of $i \in I$ such that $G_i$ is definably
    compact, with respect to the unique admissible group topology on
    $G_i$.  Then $i \in S$ (resp. $i \notin S$) if and only if there
    is an interpretable topology $\tau$ on $G_i$, a definable set $D
    \subseteq \Mm^n$, and an interpretable function $f : D \to G_i$
    such that
    \begin{itemize}
    \item $\tau$ is a Hausdorff group topology on $G_i$
    \item $f$ is a surjective open map.
    \item $(G_i,\tau)$ is definably compact (resp. not definably compact).
    \end{itemize}
    Again, these conditions are easily seen to be $\vee$-definable,
    using part (\ref{da1}) for the third point.  Thus $S$ and its complement
    are both $\vee$-definable. \qedhere
  \end{enumerate}
\end{proof}
Using different methods, Pablo And\'ujar Guerrero and the author have
shown that definable compactness is a definable property in \emph{any}
interpretable family of topological spaces
\cite[Theorem~8.16]{andujar-johnson}.  In other words,
Theorem~\ref{definability-annoy}(\ref{da1}) holds without the
assumption of strong admissibility.

\subsection{Definable compactness in $\Qq_p$}
Fix a copy of $\Qq_p$ embedded into the monster $\Mm \models
p\mathrm{CF}$.  If $X$ is a $\Qq_p$-interpretable topological space,
then $X(\Qq_p)$ is naturally a topological space.
\begin{warning}
  $X(\Qq_p)$ is usually not a subspace of $X(\Mm)$.  This is analogous
  to how if $M$ is a highly saturated elementary extension of $\Rr$,
  then $\Rr$ with the order topology is not a subspace of $M$ with the
  order topology.  In fact, if we start with $(M,\le)$ and take the
  induced subspace topology on $\Rr$, we get the discrete topology.
\end{warning}
\begin{proposition} \label{actually-compact}
  Let $X$ be a strongly admissible topological space in $\Qq_p$.  Then
  $X$ is definably compact if and only if $X$ is compact.
\end{proposition}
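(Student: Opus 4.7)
The forward direction (compact implies definably compact) is a straightforward application of Fact~\ref{dc-fact}: $X(\Qq_p)$ compact implies definably compact by item~(\ref{df1}), and this transfers up to $\Mm$ by preservation under elementary extensions (item~(7) of the same fact).

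For the converse, the strategy is to apply Lemma~\ref{trickery} and then pass to $\Qq_p$-points. Suppose $X$ is definably compact; Lemma~\ref{trickery} produces a closed and bounded definable $D \subseteq \Mm^n$ and a continuous interpretable surjection $f : D \to X$, and the first step is to arrange these to be $\Qq_p$-interpretable. Since $X$ and its topology are $\Qq_p$-interpretable, for each pair of formulas $(\varphi, \psi)$ of fixed complexity the assertion ``there exist parameters $\bar u$ such that $\varphi(\cdot,\bar u)$ defines a closed and bounded $D \subseteq \Mm^n$, $\psi(\cdot,\cdot,\bar u)$ defines a surjection $D \to X$, and this surjection is continuous with respect to a chosen interpretable basis for the topology of $X$'' is a single first-order sentence over the $\Qq_p$-parameters defining $X$. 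Lemma~\ref{trickery} makes some such sentence true in $\Mm$; by $\Qq_p \preceq \Mm$ the same sentence is true in $\Qq_p$, so we can take $D$ and $f$ to be $\Qq_p$-interpretable.

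With $D$ and $f$ defined over $\Qq_p$, the set $D(\Qq_p)$ is closed and bounded in $\Qq_p^n$, hence compact. Restricting $f$ gives a continuous map $D(\Qq_p) \to X(\Qq_p)$, and surjectivity follows from a fiber argument: for any $x \in X(\Qq_p)$, the fiber $f^{-1}(x)$ is a $\Qq_p$-definable subset of $D$ that is non-empty in $\Mm$, hence non-empty in $\Qq_p$ by elementarity. So $X(\Qq_p)$ is the continuous image of a compact space and is itself compact. The main obstacle is the elementarity step in the second paragraph: one must formulate continuity and surjectivity of $f$ with respect to the \emph{interpretable} basis of $X$ (rather than all opens) in a way that is first-order in the parameters $\bar u$, and check that Lemma~\ref{trickery} really does guarantee such a witness at some bounded complexity.
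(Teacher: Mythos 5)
Your proposal is correct and follows the same route as the paper: forward direction via Fact~\ref{dc-fact}(\ref{df1}), converse via Lemma~\ref{trickery} plus $\Qq_p \preceq \Mm$ to descend $D$ and $f$ to $\Qq_p$, then compactness of $D(\Qq_p)$ pushed forward along the continuous surjection. The elementarity step you flag as the ``main obstacle'' is exactly the step the paper dispatches in one sentence, and your expansion of it (first-order expressibility of closedness, boundedness, surjectivity, and continuity relative to the interpretable basis, plus the fiber argument for surjectivity on $\Qq_p$-points) is the standard and correct justification.
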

\begin{proof}
  If $X$ is compact, then $X$ is definably compact by
  Fact~\ref{dc-fact}(\ref{df1}).  Conversely, suppose $X$ is definably
  compact.  By Lemma~\ref{trickery}, there is a closed bounded
  definable set $D \subseteq \Mm^n$ and an interpretable continuous
  surjection $f : D \to X$.  As $\Qq_p \preceq \Mm$ we can take $D$
  and $f$ to be defined over $\Qq_p$.  Then $f : D(\Qq_p) \to
  X(\Qq_p)$ is a continuous surjection and $D(\Qq_p)$ is compact, so
  $X(\Qq_p)$ is compact.
\end{proof}
\begin{proposition} \label{eliminator}
  Let $X$ be $\Qq_p$-interpretable strongly admissible topological
  space.  If $X$ is definably compact, then $X$ is a definable
  manifold.  In particular, there is a $\Qq_p$-interpretable
  set-theoretic bijection between $X$ and a definable set.
\end{proposition}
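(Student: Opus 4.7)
The plan is to combine Lemma~\ref{trickery}, Proposition~\ref{actually-compact}, and the elementarity $\Qq_p^\eq \preceq \Mm^\eq$ to produce a \emph{finite} $\Qq_p$-interpretable atlas on $X$ of the shape required by the definition of a definable manifold.

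First I would apply Lemma~\ref{trickery} over $\Qq_p$ to obtain a closed bounded $\Qq_p$-definable $D \subseteq \Mm^n$ together with a $\Qq_p$-interpretable continuous surjection $f : D \to X$, and then invoke Proposition~\ref{actually-compact} to conclude that $X(\Qq_p)$ is compact in the standard topology.

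Next I would use the local Euclideanity of $X$, together with the bounded-complexity clause in the definition, to package the local charts into a single $\Qq_p$-interpretable family $\{(U_a, h_a)\}_{a \in A}$, where each $h_a$ is an interpretable open embedding $U_a \hookrightarrow \Mm^{k_a}$ and $\bigcup_{a \in A} U_a = X$. For every $p \in X(\Qq_p)$ the fibre $\{a \in A : p \in U_a\}$ is $\Qq_p$-interpretable over $p$ and non-empty in $\Mm^\eq$, so $\Qq_p^\eq \preceq \Mm^\eq$ yields a witness $a_p \in A(\Qq_p)$. Compactness of $X(\Qq_p)$ then extracts a finite subcover $U_{a_1}, \ldots, U_{a_m}$ of $X(\Qq_p)$.

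Finally I would observe that $V := U_{a_1} \cup \cdots \cup U_{a_m}$ is already all of $X$: the complement $X \setminus V$ is $\Qq_p$-interpretable and, by construction, disjoint from $X(\Qq_p)$, so $\Qq_p^\eq \preceq \Mm^\eq$ forces it to be empty. Thus $X$ is Hausdorff and covered by finitely many opens interpretably homeomorphic to open definable subsets of various $\Mm^{k_i}$, which is precisely the definition of a definable manifold. The ``in particular'' clause then follows from the remark after Definition~\ref{adm-def} that every definable manifold is in interpretable bijection with a definable set, the bijection inheriting $\Qq_p$-interpretability from the atlas we have constructed.

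The main obstacle I expect is the organizational step of promoting the pointwise local Euclideanity of $X$ to a single $\Qq_p$-interpretable family of charts, together with the correct use of $\Qq_p^\eq \preceq \Mm^\eq$ to realize each witness $a_p$ inside $\Qq_p^\eq$; once these formalities are in place, the descent from compactness of $X(\Qq_p)$ to a finite cover of $X$ is routine.
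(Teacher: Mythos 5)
Your proposal is correct and follows essentially the same route as the paper: use Proposition~\ref{actually-compact} to get compactness of $X(\Qq_p)$, cover the $\Qq_p$-points by finitely many $\Qq_p$-interpretable Euclidean charts, and use $\Qq_p \preceq \Mm$ to see that the finite union of charts is all of $X$. (The initial explicit appeal to Lemma~\ref{trickery} is redundant, since it is already subsumed in Proposition~\ref{actually-compact}, and the paper gets the $\Qq_p$-interpretable charts slightly more directly by applying elementarity at each point $a \in X(\Qq_p)$ rather than first assembling a uniform family.)
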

\begin{proof}
  For each point $a \in X(\Qq_p)$, there is an open neighborhood $U_a$
  and an open embedding $f_a : U_a \to \Mm^{n_a}$ where $n_a$ is the
  local dimension of $X$ at $a$.  Because $\Qq_p \preceq \Mm$, we can
  take $f_a$ and $U_a$ to be $\Qq_p$-interpretable.  Then $U_a(\Qq_p)$
  is an open subset of $X(\Qq_p)$ containing $a$.  By
  Proposition~\ref{actually-compact}, $X(\Qq_p)$ is compact, and so
  there are finitely many points $a_1, \ldots, a_n$ such that
  $X(\Qq_p) = \bigcup_{i = 1}^n U_{a_i}(\Qq_p)$.  Then $X = \bigcup_{i
    = 1}^n U_{a_i}$ because $\Qq_p \preceq \Mm$.  The sets $U_{a_i}$
  and maps $f_{a_i} : U_{a_i} \to \Mm^{n_{a_i}}$ witness that $X$ is a
  definable manifold.
\end{proof}

\section{Interpretable groups and \textit{fsg}} \label{fsg-sec}
Recall the definition of \textit{fsg} (finitely satisfiable generics)
from Section~\ref{intro-fsg}.
\begin{theorem} \label{fsg-char}
  Let $G$ be an interpretable group in a $p$-adically closed field.
  Then $G$ has finitely satisfiable generics (\textit{fsg}) if and
  only if $G$ is definably compact with respect to the admissible
  group topology.
\end{theorem}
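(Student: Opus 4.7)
The plan is to handle the two implications separately. For ``\textit{fsg} $\Rightarrow$ definably compact,'' I will use the $\Gamma$-exhaustion coming from strong admissibility of the group topology. For the converse, I will adapt the proof of the definable case in \cite{johnson-fsg}, with Lemma~\ref{trickery} as the bridge between the interpretable and definable settings.

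\emph{Easy direction.} Assume $G$ has \textit{fsg}, witnessed by a global type $p \in S_G(\Mm)$ finitely satisfiable in a small model $M_0$. By Theorem~\ref{adm-group-thm}, the admissible group topology on $G$ is strongly admissible, hence manifold dominated, so Lemma~\ref{zhang-yao}(\ref{zy2}) gives a $\Gamma$-exhaustion $\{G_\gamma\}_{\gamma \in \Gamma}$. Any realization of $p$ lies in $G = \bigcup_\gamma G_\gamma$, so some $G_{\gamma_0} \in p$. Because every translate $g \cdot p$ is finitely satisfiable in $M_0$, the formula $x \in g \cdot G_{\gamma_0}$ is realized in $G(M_0)$, i.e.\ every left translate of $G_{\gamma_0}$ meets $G(M_0)$; by Fact~\ref{hpp-fact}(3), $G_{\gamma_0}$ is left-generic. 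Therefore finitely many left translates of $G_{\gamma_0}$ cover $G$. Each translate is definably compact (left translation is an interpretable homeomorphism), and a finite union of definably compact subsets of a Hausdorff space is definably compact by Fact~\ref{dc-fact}, so $G$ itself is definably compact.

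\emph{Hard direction.} Assume $G$ is definably compact with respect to its admissible group topology. The plan is to reproduce the argument of \cite{johnson-fsg}, where Lemma~\ref{trickery} plays the role previously played by the embedding of a closed bounded definable group into $K^n$. That lemma gives a closed bounded definable set $D \subseteq \Mm^n$ and a continuous interpretable surjection $f : D \to G$. Pick a small model $M_0 \preceq \Mm$ defining $G$, $D$, $f$, and the group operations on $G$. The idea is to construct a candidate generic type on $G$ by choosing a suitable global type on $D$ --- finitely satisfiable in $M_0$, which is abundant because $D$ is closed and bounded --- and pushing it forward along $f$. The topological properties of the admissible group topology from Section~\ref{adm-sec-2} (Hausdorffness, local Euclideanness, continuity and openness of the group operations, and generic continuity of Proposition~\ref{gen-con}) let one mimic the coordinate-level manipulations of the definable case.

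\emph{Main obstacle.} The delicate step is verifying that \emph{every} left translate $g \cdot p$ of the candidate type is finitely satisfiable in $M_0$, not merely $p$ itself. In \cite{johnson-fsg} one exploits the affine structure of $G \subseteq K^n$ together with $p$-adic density of $G(M_0)$; here the group law is visible only after the quotient $f : D \to G$, so one must transport the argument through $f$. The plan is to reduce, using Proposition~\ref{gen-con} and Corollary~\ref{generically-n-manifold}, to a large open subset $G' \subseteq G$ that is locally homeomorphic to $\Mm^{\dim G}$; on such patches the translate-invariance of finite satisfiability becomes a local statement resembling the definable case, after which one invokes the machinery of \cite{johnson-fsg} to conclude.
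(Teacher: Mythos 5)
Your first direction is essentially sound and matches the route the paper takes (delegating to the $\Gamma$-exhaustion machinery, here supplied by Lemma~\ref{zhang-yao}(\ref{zy2})), modulo one slip: ``any realization of $p$ lies in $G = \bigcup_\gamma G_\gamma$, so some $G_{\gamma_0} \in p$'' is a non-sequitur, since a realization of the global type $p$ lives in an elementary extension and may fall in $G_\delta$ only for $\delta$ outside $\Gamma(\Mm)$. The conclusion is still true, but the correct justification goes through finite satisfiability: enlarging $M_0$ to define the exhaustion and choosing $\gamma_0 \in \Gamma(\Mm)$ greater than every element of $\Gamma(M_0)$, one has $G_{\gamma_0} \supseteq G(M_0)$, so $G \setminus G_{\gamma_0}$ misses $G(M_0)$ and hence cannot lie in $p$. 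From there your argument via Fact~\ref{hpp-fact}(3) and Fact~\ref{dc-fact} is fine.

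The converse, however, has a genuine gap, and it sits exactly where you place your ``main obstacle.'' Pushing forward an arbitrary $M_0$-finitely-satisfiable type along the surjection $D \to G$ of Lemma~\ref{trickery} gives a type finitely satisfiable in $M_0$, but there is no reason whatsoever for its left translates to be finitely satisfiable; choosing the type so that \emph{all} translates work is the entire content of the theorem, and in the definable case it is done via the Haar measure on the compact group $G(\Qq_p)$, not by any local or coordinate-level manipulation. Your proposed reduction to locally Euclidean patches via Proposition~\ref{gen-con} and Corollary~\ref{generically-n-manifold} does not help: finite satisfiability of every translate is a global, group-theoretic condition, and the group law does not respect the patches. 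The paper avoids this entirely by a different decomposition: one first reduces to parameters in $\Qq_p$ (the model-transfer argument of \cite[Section~6]{johnson-fsg}), and then observes via Proposition~\ref{eliminator} that a definably compact $\Qq_p$-interpretable group is already a definable manifold, hence a \emph{definable} group, so the definable-case result \cite[Proposition~5.1]{johnson-fsg} applies verbatim and no new measure-theoretic argument is needed. Your proposal uses neither Proposition~\ref{eliminator} nor any reduction to $\Qq_p$, so the hard direction remains unproved as written.
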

\begin{proof}
   The arguments from \cite[Sections 3, 5--6]{johnson-fsg} work almost
   verbatim, given everything we have proved so far.  The existence of
   $\Gamma$-exhaustions, used in \cite[Section 3]{johnson-fsg}, is
   handled by Lemma~\ref{zhang-yao}(\ref{zy2}).  We don't need to redo the
   arguments of \cite[Section 5]{johnson-fsg}---if $G$ is a definably
   compact $\Qq_p$-interpretable group, then $G$ \emph{is already
   definable} by Proposition~\ref{eliminator}, so we can directly
   apply \cite[Proposition~5.1]{johnson-fsg}.\footnote{In particular,
   we don't need to worry about Haar measurability of interpretable
   subsets of $G$ because $G$ is definable.}  The arguments of
   \cite[Section 6]{johnson-fsg} go through, changing ``definable'' to
   ``interpretable'' everywhere.
\end{proof}

\begin{corollary}\label{fsg-def}
  If $\{G_a\}_{a \in Y}$ is an interpretable family of interpretable
  groups, then the set
  \begin{equation*}
    \{a \in Y : G_a \text{ has \textit{fsg}}\}
  \end{equation*}
  is interpretable.
\end{corollary}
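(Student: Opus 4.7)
The plan is to combine the two main theorems that have already been established: Theorem~\ref{fsg-char}, which characterizes \textit{fsg} as definable compactness with respect to the admissible group topology, and Theorem~\ref{definability-annoy}(\ref{da2}), which says that definable compactness with respect to the admissible group topology is a definable property in interpretable families of groups. Once both are in hand, the corollary is essentially immediate.

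More concretely, I would first invoke Theorem~\ref{fsg-char} pointwise: for each $a \in Y$, the group $G_a$ has \textit{fsg} if and only if $(G_a, \tau_a)$ is definably compact, where $\tau_a$ denotes the unique admissible group topology on $G_a$ guaranteed by Theorem~\ref{adm-group-thm}. This gives the set-theoretic identity
\begin{equation*}
\{a \in Y : G_a \text{ has \textit{fsg}}\} = \{a \in Y : (G_a,\tau_a) \text{ is definably compact}\}.
\end{equation*}
The right-hand side is interpretable by Theorem~\ref{definability-annoy}(\ref{da2}) applied to the family $\{G_a\}_{a \in Y}$, and the result follows.

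The only point that requires a moment of care is to make sure the pointwise application of Theorem~\ref{fsg-char} is legitimate in an interpretable family: the topologies $\tau_a$ are defined uniformly in $a$ only implicitly, through the uniqueness clause of Theorem~\ref{adm-group-thm}. However, Theorem~\ref{definability-annoy}(\ref{da2}) is already formulated to deal precisely with this situation, since the existential quantifier over an interpretable topology $\tau$ on $G_a$ satisfying the axioms of a definably compact admissible group topology is exactly the $\vee$-definable condition used in the proof of that theorem. So no additional uniformity argument is needed, and the corollary follows as a one-line consequence of the two theorems.
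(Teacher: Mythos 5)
Your proposal is correct and matches the paper's proof exactly: the corollary is obtained by combining Theorem~\ref{fsg-char} with Theorem~\ref{definability-annoy}(\ref{da2}), and your remark about uniformity being absorbed into the formulation of Theorem~\ref{definability-annoy} is accurate.
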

\begin{proof}
  Theorems~\ref{fsg-char} and \ref{definability-annoy}.
\end{proof}

\begin{corollary} \label{fsg-int-qp}
  Let $G$ be an interpretable group in $\Qq_p$.  If $G$ has
  \textit{fsg}, then $G$ is interpretably isomorphic to a definable
  group.
\end{corollary}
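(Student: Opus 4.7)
The plan is to combine Theorem~\ref{fsg-char}, Theorem~\ref{adm-group-thm}, and Proposition~\ref{eliminator} into a short chain. First, since $G$ has \textit{fsg}, Theorem~\ref{fsg-char} tells us $G$ is definably compact when equipped with its admissible group topology. Next, by Theorem~\ref{adm-group-thm}, this admissible group topology is in fact strongly admissible, so we are in a position to apply the results of the previous section. Because $G$ is $\Qq_p$-interpretable and the admissible group topology is $\Qq_p$-interpretable (by uniqueness and Galois invariance, or by inspection of the construction in Section~\ref{sec:ex}), we may view $G$ as a $\Qq_p$-interpretable strongly admissible space.

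Applying Proposition~\ref{eliminator} to $G$ then gives a $\Qq_p$-interpretable set-theoretic bijection $\phi : G \to D$, where $D$ is a definable set (indeed $G$ is a definable manifold). I would then transport the group operation and the inversion from $G$ across $\phi$ to define a multiplication $m_D : D \times D \to D$ and inversion $\iota_D : D \to D$. By construction these are $\Qq_p$-interpretable, and $\phi$ becomes a group isomorphism between $G$ and $(D, m_D, \iota_D)$.

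The step that requires a brief verification (and which I expect to be the main, though routine, obstacle) is the claim that the transported operations are actually definable, not merely interpretable. This relies on the fact that a $\Qq_p$-interpretable subset of a power of the home sort is already $\Qq_p$-definable: given an $\eq$-formula $\varphi(x, [a]_E)$ defining a subset of $\Mm^n$, one can replace $[a]_E$ by existential quantification over representatives, obtaining a formula in the home sort with real parameter $a$. Applying this observation to the graphs of $m_D \subseteq \Mm^{2n} \times \Mm^n$ and $\iota_D \subseteq \Mm^n \times \Mm^n$ promotes them to definable relations, so $(D, m_D, \iota_D)$ is a definable group.

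Putting the pieces together: $G$ is $\Qq_p$-interpretably bijective with $D$, the transported group structure on $D$ is definable, and the bijection is therefore an interpretable isomorphism from $G$ to the definable group $D$, which is exactly the conclusion. The whole argument is essentially a one-line deduction from Theorem~\ref{fsg-char} and Proposition~\ref{eliminator}, together with the standard fact that interpretable structure on a home-sort set descends to definable structure.
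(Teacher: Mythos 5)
Your proof is correct and follows the paper's own route exactly: the paper's proof of Corollary~\ref{fsg-int-qp} is just the citation of Proposition~\ref{eliminator} (with Theorem~\ref{fsg-char} and Theorem~\ref{adm-group-thm} implicitly supplying definable compactness and strong admissibility), and your transport-of-structure step, including the observation that interpretable subsets of powers of the home sort are definable, is the standard argument the paper leaves implicit (and uses elsewhere, e.g.\ in Proposition~\ref{subspaces}).
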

\begin{proof}
  Proposition~\ref{eliminator}.
\end{proof}

\section{Zero-dimensional groups and sets} \label{0-dim}
In Remark~\ref{local-dim-remark}, we saw that an admissible group $G$
is discrete iff it is zero-dimensional.  This holds more generally for
admissible spaces, by the following two propositions:
\begin{proposition}
  If $X$ is an admissible topological space and $X$ is discrete, then
  $\dim(X) \le 0$.
\end{proposition}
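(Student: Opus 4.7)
The plan is to combine the local-dimension formula for definably dominated spaces (Proposition~\ref{local-dim}) with the observation that discreteness forces every local dimension to be zero. Since $X$ is admissible, it is in particular definably dominated, so Proposition~\ref{local-dim} applies and gives
\[
  \dim(X) = \max_{p \in X} \dim_p(X),
\]
where $\dim_p(X)$ is the infimum of $\dim(U)$ over neighborhoods $U$ of $p$.

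Now I would use discreteness: for every $p \in X$, the singleton $\{p\}$ is itself an open neighborhood of $p$, and $\{p\}$ is finite, so by Proposition~\ref{dim-of-sets}(\ref{dos2}) we have $\dim(\{p\}) \le 0$. This immediately yields $\dim_p(X) \le 0$ for every $p \in X$. Plugging this into the formula above gives $\dim(X) \le 0$, as desired.

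There is essentially no obstacle here; the statement is a formal consequence of the two results already proved. The only mild subtlety worth noting is that one really does need the hypothesis of definable domination (i.e.\ the full strength of admissibility, or at least its ``definably dominated'' half) in order to invoke Proposition~\ref{local-dim}. Without it the identity $\dim(X) = \max_p \dim_p(X)$ can fail, as the example of $\Gamma_\infty$ in Example~\ref{half-compactification} and Remark~\ref{why-definably-dominated} illustrates. So the proof proposal is simply: cite local definable domination, invoke Proposition~\ref{local-dim}, and use that singletons are open and $0$-dimensional.
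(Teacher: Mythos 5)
Your proof is correct and is exactly the argument the paper has in mind: the paper's proof reads ``Immediate by considering local dimension (Proposition~\ref{local-dim})'', and your write-up simply fills in the (genuinely immediate) details that singletons are open, hence $\dim_p(X)\le\dim(\{p\})\le 0$ for all $p$, so $\dim(X)=\max_p\dim_p(X)\le 0$. Nothing further is needed.
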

\begin{proof}
  Immediate by considering local dimension
  (Proposition~\ref{local-dim}).
\end{proof}
Conversely, the discrete topology is admissible on zero-dimensional
sets:
\begin{proposition} \label{funny-converse}
  Let $X$ be a zero-dimensional interpretable set.  The discrete
  topology is strongly admissible, and is the only admissible topology
  on $X$.
\end{proposition}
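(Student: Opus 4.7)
The plan is to reduce both halves of the proposition to a single application of Corollary~\ref{generically-n-manifold}, exploiting the fact that when $\dim(X) = 0$ the notion of a ``large'' subset becomes degenerate.

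First I would establish uniqueness. Let $\tau$ be any admissible topology on $X$. Applying Corollary~\ref{generically-n-manifold} with $n = \dim(X) = 0$ produces a large open subset $X' \subseteq X$ which is everywhere locally homeomorphic to $\Mm^0$. By the definition of ``large,'' $\dim(X \setminus X') < \dim(X) = 0$; but by Proposition~\ref{dim-of-sets} every non-empty interpretable set has dimension at least $0$, so $X \setminus X' = \varnothing$ and hence $X = X'$. Now $\Mm^0$ is a one-point space, so a space everywhere locally homeomorphic to $\Mm^0$ is precisely one in which every point has a singleton neighborhood, i.e.\ a discrete space. Thus $\tau$ is the discrete topology.

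For existence, Theorem~\ref{construction-1} supplies at least one strongly admissible topology $\tau_0$ on $X$. In particular $\tau_0$ is admissible, so by the argument above $\tau_0$ is the discrete topology. Therefore the discrete topology on $X$ is strongly admissible, and combined with the uniqueness argument it is the only admissible topology on $X$.

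There is essentially no obstacle in this argument; the only thing to watch is that the inequality $\dim(X \setminus X') < 0$ really does force $X \setminus X' = \varnothing$, which hinges on our convention $\dim(\varnothing) = -\infty$ together with Proposition~\ref{dim-of-sets}(1). Everything else is an immediate invocation of the tameness machinery already developed.
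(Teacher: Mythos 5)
Your proof is correct and follows essentially the same route as the paper: the paper invokes Proposition~\ref{mostly-euclidean} to get a large open locally Euclidean subset, concludes it equals $X$ since $\dim(X\setminus X') < 0$ forces $X \setminus X' = \varnothing$, and then gets existence from Theorem~\ref{construction-1}; you invoke Corollary~\ref{generically-n-manifold} (itself a consequence of Proposition~\ref{mostly-euclidean}) instead, which is an inessential variation.
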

\begin{proof}
  We claim any admissible topology $\tau$ on $X$ is discrete.  Indeed,
  Proposition~\ref{mostly-euclidean} gives a large open subspace $X'
  \subseteq X$ that is locally Euclidean.  Then $\dim(X \setminus X')
  < \dim(X) = 0$, so $X \setminus X' = \varnothing$ and $X' = X$.
  Zero-dimensional locally Euclidean spaces are discrete.

  Meanwhile, Theorem~\ref{construction-1} shows that there is
  \emph{some} strongly admissible topology $\tau$ on $X$.  By the
  previous paragraph, $\tau$ must be the discrete topology.
\end{proof}
\begin{definition} \label{psf-def}
  An interpretable set $S$ is \emph{pseudofinite} if $\dim(S) = 0$,
  and $S$ with the discrete topology is definably compact.
\end{definition}
(In Proposition~\ref{other-psf}, we will see that the requirement
$\dim(S) = 0$ is redundant.)
\begin{proposition} \label{stupid}
  If an interpretable set $S$ is finite, then $S$ is pseudofinite.
\end{proposition}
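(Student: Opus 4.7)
The plan is to verify the two defining conditions of pseudofiniteness directly from the results already assembled.

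First, I would handle the dimension condition. If $S$ is finite and non-empty, then $\dim(S) \le 0$ by Proposition~\ref{dim-of-sets}(\ref{dos2}), and $\dim(S) \ge 0$ by the same proposition (since $S$ is non-empty), so $\dim(S) = 0$. The edge case $S = \varnothing$ (where $\dim(S) = -\infty$) is either covered by a convention that the empty set is trivially pseudofinite or can be checked separately as a degenerate instance; in either case it is harmless.

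Second, I would verify definable compactness of the discrete topology on $S$. This is immediate from Fact~\ref{dc-fact}: finite sets are definably compact with respect to any topology, in particular the discrete one. (An intersection of a downwards-directed family of non-empty subsets of a finite set stabilizes, hence is non-empty.)

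Combining the two, $S$ satisfies both clauses of Definition~\ref{psf-def}, so $S$ is pseudofinite. There is no substantive obstacle here; the proposition is a sanity check that Definition~\ref{psf-def} is consistent with finiteness, and both requirements reduce to one-line invocations of prior results.
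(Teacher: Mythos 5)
Your proposal is correct and follows essentially the same route as the paper: dimension zero via Proposition~\ref{dim-of-sets}(\ref{dos2}) and definable compactness of the discrete topology via Fact~\ref{dc-fact} (the paper goes through compactness and item (\ref{df1}); you cite the ``finite sets are definably compact'' item directly, which is the same fact). Your extra care about the empty set is a minor refinement the paper glosses over, but nothing of substance differs.
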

\begin{proof}
  Suppose $S$ is finite.  Then $\dim(S) = 0$ by
  Proposition~\ref{dim-of-sets}(\ref{dos2}).  The discrete topology on $S$ is
  compact, hence definably compact by Fact~\ref{dc-fact}(\ref{df1}).
\end{proof}
\begin{proposition} \phantomsection \label{psf-prop}
  \begin{enumerate}
  \item \label{psp1} Let $S$ be a $\Qq_p$-interpretable set.  Then $S$ is pseudofinite
    iff $S$ is finite.
  \item \label{psp2} If $\{S_a\}_{a \in I}$ is an interpretable family, then $\{a
    \in I : S_a \text{ is pseudofinite}\}$ is interpretable.
  \end{enumerate}
\end{proposition}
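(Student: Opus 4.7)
The plan is to reduce both parts to the structural machinery already built in Sections~\ref{adm-sec-1} and \ref{def-com-sec}, chiefly Propositions~\ref{eliminator}, \ref{funny-converse}, \ref{dim-definable-omega}, and Theorem~\ref{definability-annoy}.

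For part (\ref{psp1}), one direction is immediate from Proposition~\ref{stupid}. For the converse, suppose $S$ is $\Qq_p$-interpretable and pseudofinite. Since $\dim(S) = 0$, Proposition~\ref{funny-converse} says that the discrete topology on $S$ is strongly admissible. Combined with definable compactness, this brings $S$ (with the discrete topology) within the scope of Proposition~\ref{eliminator}, which yields that $S$ is a definable manifold. A short dimension calculation then finishes the job: each chart $U_i$ in the manifold atlas is interpretably homeomorphic to a non-empty open definable subset $V_i \subseteq \Mm^{k_i}$, and since a non-empty open subset of $\Mm^{k_i}$ has dimension $k_i$ while $\dim(U_i) \le \dim(S) = 0$, we must have $k_i = 0$, so each $U_i$ is a single point. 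Hence $S$ is a finite union of points.

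For part (\ref{psp2}), first note by Proposition~\ref{dim-definable-omega} that $I_0 := \{a \in I : \dim(S_a) = 0\}$ is interpretable. Over $I_0$, equipping each $S_a$ with the discrete topology produces a uniformly $\varnothing$-interpretable family of strongly admissible topological spaces (by Proposition~\ref{funny-converse} again). Theorem~\ref{definability-annoy}(\ref{da1}) then shows that
\[ \{a \in I_0 : S_a \text{ with the discrete topology is definably compact}\} \]
is interpretable, and by the definition of pseudofiniteness this coincides with $\{a \in I : S_a \text{ is pseudofinite}\}$.

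Since the heavy lifting has already been carried out in earlier sections, no serious obstacle is anticipated; the only step deserving explicit verification is the small dimension argument showing that a $0$-dimensional definable manifold is finite.
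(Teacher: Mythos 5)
Your proposal is correct and follows essentially the same route as the paper: part (\ref{psp2}) is verbatim the paper's argument (Proposition~\ref{dim-definable-omega}, then Proposition~\ref{funny-converse}, then Theorem~\ref{definability-annoy}(\ref{da1})), and part (\ref{psp1}) likewise reduces to strong admissibility of the discrete topology via Proposition~\ref{funny-converse}. The only difference is in the last step of part (\ref{psp1}): the paper concludes directly from Proposition~\ref{actually-compact} (over $\Qq_p$, definably compact equals compact, and a compact discrete space is finite), whereas you detour through Proposition~\ref{eliminator} and a dimension count on the manifold charts --- valid, but slightly longer, since Proposition~\ref{eliminator} is itself proved from Proposition~\ref{actually-compact}.
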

\begin{proof}
  \begin{enumerate}
  \item If $S$ is finite, then $S$ is pseudofinite by
    Proposition~\ref{stupid}.  Conversely, suppose $S$ is
    pseudofinite, so $\dim(S) = 0$ and the discrete topology on $S$ is
    definably compact.  By Proposition~\ref{funny-converse}, the
    discrete topology is strongly admissible.  By
    Proposition~\ref{actually-compact}, the discrete topology is
    compact, so $S$ is finite.
  \item Dimension 0 is definable by
    Proposition~\ref{dim-definable-omega}. Assuming dimension 0, the
    discrete topology is strongly admissible by
    Proposition~\ref{funny-converse}, and then definable
    compactness is definable by Theorem~\ref{definability-annoy}(\ref{da1}). \qedhere
  \end{enumerate}
\end{proof}
\begin{remark}
  Proposition~\ref{psf-prop} characterize pseudofiniteness
  uniquely---it is the unique definable property which agrees with
  finiteness over $\Qq_p$.
\end{remark}
\begin{corollary} \label{exists-infty}
  $\Qq_p^{\eq}$ eliminates $\exists^\infty$: for any $L^\eq$-formula
    $\phi(x,y)$, there is a formula $\psi(y)$ such that
    $\phi(\Qq_p,b)$ is infinite if and only if $b \in \psi(\Qq_p)$.
    (But $\Mm^\eq$ does \emph{not} eliminate $\exists^\infty$, of
    course.)
\end{corollary}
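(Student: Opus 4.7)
The plan is to deduce the elimination of $\exists^\infty$ directly from the two-part characterization of pseudofiniteness in Proposition~\ref{psf-prop}. The statement of Proposition~\ref{psf-prop} was engineered precisely for this purpose: part~(\ref{psp1}) converts ``pseudofinite'' to ``finite'' over $\Qq_p$, while part~(\ref{psp2}) tells us pseudofiniteness is a definable property in interpretable families. Combining these gives the elimination essentially for free.

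In detail, given an $L^\eq$-formula $\phi(x,y)$, consider the $0$-interpretable family $\{S_y\}$ where $S_y := \phi(\Mm, y)$. By Proposition~\ref{psf-prop}(\ref{psp2}) applied to this family, the set $\{y : S_y \text{ is pseudofinite}\}$ is $0$-interpretable, so it is defined by some $L^\eq$-formula $\chi(y)$ with no parameters. Restricting to $\Qq_p^\eq$, for any imaginary $b \in \Qq_p^\eq$ we have, by Proposition~\ref{psf-prop}(\ref{psp1}),
\begin{equation*}
  \chi(b) \text{ holds in } \Qq_p^\eq \iff \phi(\Qq_p,b) \text{ is pseudofinite} \iff \phi(\Qq_p,b) \text{ is finite}.
\end{equation*}
Thus $\psi(y) := \neg \chi(y)$ is the required formula: $b \in \psi(\Qq_p)$ precisely when $\phi(\Qq_p, b)$ is infinite.

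There is essentially no obstacle at this stage, since all the real work has already been done in assembling Proposition~\ref{psf-prop} (which in turn rests on Proposition~\ref{funny-converse}, Proposition~\ref{actually-compact}, and the definability of definable compactness in Theorem~\ref{definability-annoy}(\ref{da1})). The only care needed is to note that $\chi(y)$ can be taken without parameters because the family $\{S_y\}$ is $0$-interpretable and part~(\ref{psp2}) of Proposition~\ref{psf-prop}, applied to a $0$-interpretable family, yields a $0$-interpretable subset. The final parenthetical remark that $\Mm^\eq$ does not eliminate $\exists^\infty$ is of course consistent with this: the formula $\chi$ detects pseudofiniteness rather than finiteness, and over $\Mm$ these two notions diverge.
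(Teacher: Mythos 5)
Your proposal is correct and is exactly the argument the paper intends: the corollary is stated without proof immediately after Proposition~\ref{psf-prop} precisely because it follows from combining part~(\ref{psp2}) (definability of pseudofiniteness in families) with part~(\ref{psp1}) (pseudofinite $=$ finite over $\Qq_p$), as you do. The one point worth noting is that passing between finiteness of $\phi(\Qq_p,b)$ and of the $\Qq_p$-interpretable set $\phi(\Mm,b)$ uses $\Qq_p \preceq \Mm$, which you implicitly and correctly rely on.
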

This could probably also be seen by the explicit characterization of
imaginaries in \cite[Theorem~1.1]{pcf-ei}.
%% It \emph{might} also be possible to
%% deduce Corollary~\ref{exists-infty} from the techniques of
%% \cite{imaginary-finiteness}, though the result there is not directly applicable.

For 0-dimensional groups, pseudofiniteness is equivalent to \textit{fsg}:
\begin{proposition} \label{p8.8}
  Let $G$ be a 0-dimensional interpretable group.  Then $G$ has
  \textit{fsg} if and only if $G$ is pseudofinite.
\end{proposition}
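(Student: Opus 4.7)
The plan is to combine Theorem~\ref{fsg-char} with Remark~\ref{local-dim-remark} and Definition~\ref{psf-def}; everything lines up so cleanly that there is essentially no obstacle, and the proof reduces to matching up definitions.

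First I would invoke Remark~\ref{local-dim-remark}: since $\dim(G) = 0$ by hypothesis, the unique admissible group topology on $G$ is the discrete topology. Next I would apply Theorem~\ref{fsg-char}, which characterizes \textit{fsg} for interpretable groups: $G$ has \textit{fsg} if and only if $G$ is definably compact with respect to its admissible group topology. Substituting in the previous observation, this is the same as saying $G$ is definably compact in the discrete topology.

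Finally I would compare with Definition~\ref{psf-def}: $G$ is pseudofinite if and only if $\dim(G) = 0$ and $G$ with the discrete topology is definably compact. Since $\dim(G) = 0$ is assumed, being pseudofinite is equivalent to being definably compact in the discrete topology, which by the previous paragraph is equivalent to having \textit{fsg}. This closes the chain of equivalences.

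If anything counts as a potential subtlety, it is only making sure that the ``admissible group topology'' and ``discrete topology'' truly refer to the same topology on $G$, so that Theorem~\ref{fsg-char} and Definition~\ref{psf-def} are talking about definable compactness of the same topological space; but this is exactly the content of Remark~\ref{local-dim-remark}, so no extra work is required.
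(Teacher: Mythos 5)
Your proof is correct and matches the paper's argument exactly: both use Remark~\ref{local-dim-remark} (or Proposition~\ref{funny-converse}) to identify the admissible group topology with the discrete topology, then apply Theorem~\ref{fsg-char} and unwind Definition~\ref{psf-def}. No issues.
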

\begin{proof}
  By Theorem~\ref{fsg-char}, $G$ has \textit{fsg} if and only if the
  admissible group topology on $G$ is definably compact.  By
  Remark~\ref{local-dim-remark} or
  Proposition~\ref{funny-converse}, the admissible group topology
  on $G$ is discrete.
\end{proof}
For example, the value group $\Gamma$ does not have \textit{fsg}, but
the circle group $[0,\gamma) \subseteq \Gamma$ (with addition ``modulo
  $\gamma$'') \emph{does} have \textit{fsg}.

We close by giving some equivalent characterizations of
pseudofiniteness.
\begin{proposition} \label{other-psf}
  Let $S$ be an interpretable set.  The following are equivalent:
  \begin{enumerate}
  \item \label{op1} $S$ is pseudofinite.
  \item \label{op2} $S$ with the discrete topology is definably compact.
  \item \label{op3} If $\mathcal{D} = \{D_a\}_{a \in I}$ is an interpretable
    family of subsets of $S$, and $\mathcal{D}$ is linearly ordered
    under inclusion, then $\mathcal{D}$ has a minimal element.
  \end{enumerate}
\end{proposition}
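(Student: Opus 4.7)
The plan is to prove the cycle $(1) \Rightarrow (2) \Rightarrow (3) \Rightarrow (1)$, with the key technical content being that definable compactness of the discrete topology (condition (2)) already forces $\dim(S) = 0$. The implication $(1) \Rightarrow (2)$ is immediate from Definition~\ref{psf-def}, so my focus will be on $(2) \Rightarrow (3) \Rightarrow (1)$, together with a direct verification of $(2) \Rightarrow \dim(S) = 0$.

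For both $(2) \Rightarrow (1)$ and $(3) \Rightarrow \dim(S) = 0$, I will use the same construction. Suppose $\dim(S) = n \geq 1$. By Corollary~\ref{container}, there is an interpretable injection $f : D \hookrightarrow S$ with $D \subseteq \Mm^n$ a nonempty open definable set. Fix $c \in D$ and $\gamma_0 \in \Gamma$ with the open ball $B(c, \gamma_0) \subseteq D$, and consider
\[
\mathcal{F} = \{f(B(c, \gamma) \setminus \{c\}) : \gamma \in \Gamma,\ \gamma \geq \gamma_0\}.
\]
This is an interpretable, linearly ordered (hence downward-directed) family of nonempty subsets of $S$ whose intersection is empty, since $\bigcap_\gamma B(c, \gamma) = \{c\}$ in the monster by Hausdorffness of the valuation topology. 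Therefore $\mathcal{F}$ directly refutes (2); and since $\Gamma$ has no maximum, no smallest ball occurs in $\mathcal{F}$, so $\mathcal{F}$ also refutes (3). Combined with the discrete-compactness clause of (2), this gives $(2) \Rightarrow (1)$, and it gives the dimension half of $(3) \Rightarrow (1)$.

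For the remaining step $(2) \Rightarrow (3)$: let $\{D_a\}_{a \in I}$ be an interpretable linearly ordered family; if some $D_a = \emptyset$ it is the minimum, so assume each $D_a$ is nonempty. A linearly ordered family is downward-directed, so (2) gives $P := \bigcap_a D_a \neq \emptyset$. Pick $x \in P$: the subfamily $\{D_a \setminus \{x\}\}_{a \in I}$ is still interpretable and linearly ordered, and if some $D_a = \{x\}$ it serves as a minimum (since $x \in D_b$ for every $b$), contradicting the no-minimum assumption. Otherwise every member of the shrunken family is nonempty, and (2) gives a new element $y \in P$ with $y \neq x$. Iterating and packaging the iteration interpretably via Proposition~\ref{dim-definable-omega} produces an interpretable downward-directed family of nonempty subsets of $S$ with empty intersection, again contradicting (2). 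Finally, for $(3) \Rightarrow (2)$ (granted $\dim(S) = 0$, so the discrete topology is strongly admissible by Proposition~\ref{funny-converse}), I argue dually: from a downward-directed interpretable family of nonempty subsets with empty intersection, extract an interpretable linearly ordered subchain without minimum, using a $\Gamma$-parametrized refinement in the spirit of the exhaustion technique from Section~\ref{def-com-sec}.

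The main obstacle lies in making the iterative shrinking of $(2) \Rightarrow (3)$ and the dual chain extraction of $(3) \Rightarrow (2)$ genuinely \emph{interpretable}: the classical topological analogues rest on Zorn's lemma or a well-ordering, which are unavailable in the interpretable setting. Overcoming this will require leveraging the definability of dimension and of definable compactness in interpretable families (Proposition~\ref{dim-definable-omega} and Theorem~\ref{definability-annoy}), the saturation of the monster model, and the special structure of $0$-dimensional interpretable sets where interpretable selection through $\acl^\eq$-style arguments becomes tractable.
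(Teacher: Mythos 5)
Your reduction of the dimension question is sound and matches the paper: Corollary~\ref{container} plus the chain of punctured balls $B(c,\gamma)\setminus\{c\}$ (the paper uses $B(\gamma)^n\setminus\{\bar 0\}$) kills both (2) and (3) when $\dim(S)>0$, and in the remaining $0$-dimensional case the $\Gamma$-exhaustion from Lemma~\ref{zhang-yao} supplies the linearly ordered family $\{S\setminus S_\gamma\}_{\gamma\in\Gamma}$ witnessing $\neg(3)$ when the discrete topology is not definably compact --- so the ``chain extraction'' you worry about at the end is not needed; one simply uses the exhaustion directly rather than refining the given downward-directed family.

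The genuine gap is your step $(2)\Rightarrow(3)$. The iterative shrinking (pick $x$ in the intersection, delete it, repeat) is a transfinite process with no interpretable bookkeeping; Proposition~\ref{dim-definable-omega} is about definability of dimension in families and gives you no handle on parametrizing such an iteration, and you concede yourself that this is where the argument stalls. The paper avoids the issue entirely by proving $(1)\Rightarrow(3)$ via a model-theoretic transfer: pseudofiniteness is definable in interpretable families (Proposition~\ref{psf-prop}), so given a pseudofinite $S$ with an interpretable chain lacking a minimum, one may move the whole configuration to be interpretable over $\Qq_p$ using $\Qq_p\preceq\Mm$; over $\Qq_p$ pseudofinite means finite (Proposition~\ref{psf-prop} again), and a chain of subsets of a finite set trivially has a minimum --- contradiction. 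This is the missing idea; with it, the paper closes the cycle as $(1)\Rightarrow(2)$, $(1)\Rightarrow(3)$, both $(2)$ and $(3)$ implying the weak chain condition ``linearly ordered interpretable families of non-empty sets have non-empty intersection,'' and that weak condition implying $(1)$ by the two-case analysis you already have. I recommend restructuring your proof along these lines rather than trying to salvage the iteration.
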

\begin{proof}
  Consider two additional criteria:
  \begin{enumerate}
    \setcounter{enumi}{3}
  \item \label{op4} If $\mathcal{D} = \{D_a\}_{a \in I}$ is a downwards-directed
    interpretable family of non-empty subsets of $S$, then $\bigcap_{a
      \in I} D_a \ne \varnothing$.
  \item \label{op5} If $\mathcal{D} = \{D_a\}_{a \in I}$ is a linearly ordered
    interpretable family of non-empty subsets of $S$, then $\bigcap_{a
      \in I} D_a \ne \varnothing$.
  \end{enumerate}
  We claim that (\ref{op1})$\implies$(\ref{op2})$\iff$(\ref{op4})$\implies$(\ref{op5}),
  (\ref{op1})$\implies$(\ref{op3})$\implies$(\ref{op5}), and (\ref{op5})$\implies$(\ref{op1}).  First of all,
  (\ref{op1})$\implies$(\ref{op2}) by our definition of ``pseudofinite,'' and
  (\ref{op2})$\iff$(\ref{op4}) by definition of definable compactness.  The
  implications (\ref{op4})$\implies$(\ref{op5}) and (\ref{op3})$\implies$(\ref{op5}) are trivial.

  If (\ref{op1})$\centernot \implies$(\ref{op3}), then there is a pseudofinite interpretable
  set $S$ and an interpretable chain $\mathcal{D}$ of subsets of $S$
  with no minimum.  Because $\Qq_p \preceq \Mm$ and pseudofiniteness
  is a definable property (Proposition~\ref{psf-prop}(\ref{psp2})), we can
  assume $S$ and $\mathcal{D}$ are interpretable over $\Qq_p$.  Then
  $S$ is actually finite (Proposition~\ref{psf-prop}(\ref{psp1})) and so
  $\mathcal{D}$ must have a minimum, a contradiction.

  It remains to show (\ref{op5})$\implies$(\ref{op1}).  We prove $\neg(\ref{op5})$ assuming
  $\neg(\ref{op1})$.  There are two cases, depending on why (\ref{op1}) fails:
  \begin{description}
  \item[$\dim(S) > 0$:] Then Corollary~\ref{container} shows that
    there is a ball $B$ in $\Mm^n$ for $n = \dim(S)$, and an
    interpretable injection $B \hookrightarrow S$.  Without loss of
    generality, $B$ is $\Oo^n$, where $\Oo$ is the valuation ring on
    $\Mm$.  Then $B$ contains a linearly ordered definable family of
    non-empty subsets with empty intersection, namely the family of
    sets $B(\gamma)^n \setminus \{\bar{0}\}$, where $B(\gamma)
    \subseteq \Mm$ is the ball of radius $\gamma$.  Using the
    embedding $B \hookrightarrow S$, we get a similar family in $S$,
    contradicting (\ref{op5}).
  \item[$\dim(S) = 0$] but the discrete topology is not definably
    compact: Then $S$ with the discrete topology is strongly
    admissible by Proposition~\ref{funny-converse}.  By
    Lemma~\ref{zhang-yao} there is a $\Gamma$-exhaustion
    $\{S_\gamma\}_{\gamma \in \Gamma}$ of $S$.  As $S$ is not
    definably compact, $S \setminus S_\gamma \ne \varnothing$ for each
    $\gamma$, by Proposition~\ref{gx-purpose}.  Then the family $\{S
    \setminus S_\gamma\}_{\gamma \in \Gamma}$ contradicts
    (\ref{op5}). \qedhere
  \end{description}
\end{proof}

\subsection{Geometric elimination of imaginaries} \label{geometric-ei}
The $n$th \emph{geometric sort} is the quotient $S_n :=
GL_n(\Mm)/GL_n(\Oo)$.  The theory $\pCF$ has elimination of
imaginaries after adding the geometric sorts to the language
\cite[Theorem~1.1]{pcf-ei}.  Consequently, we may assume any
interpretable set $X$ is a definable subset of some product $\Mm^n
\times \prod_{i = 1}^k S_{m_i}$.  The geometric sorts are
0-dimensional, so by Proposition~\ref{funny-converse} the discrete
topology on $S_n$ is an admissible topology.  Consequently, if we take
the the standard topology on $\Mm$, the discrete topology on each
$S_{m_i}$, the product topology on $\Mm^n \times \prod_{i = 1}^k
S_{m_i}$, and the subspace topology on $X \subseteq \Mm^n \times
\prod_{i = 1}^k S_{m_i}$, we get an admissible topology on $X$, by
Section~\ref{closure-props}.

This suggests the following alternative approach to tame topology on
interpretable sets in $\pCF$.  Work in the language $L_G$ with the
geometric sorts.  Then all interpretable sets are definable, up to
isomorphism.  Endow the home sort $\Mm$ with the usual valuation
topology, and endow each geometric sort $S_n$ with the discrete
topology.  Endow any $L_G$-definable set $D \subseteq \Mm^n \times
\prod_{i = 1}^k S_{m_i}$ with the subspace topology (of the product
topology).  By analogy with the o-minimal ``definable spaces'' in
\cite{PS}, say that a definable topological space $X$ is a
\emph{geometric definable space} if it is covered by finitely many
open sets $U_1, \ldots, U_n$, each of which is definably isomorphic to
an ($L_G$-)definable set.  Note that geometric definable spaces are
admissible.

Geometric definable spaces should be
an adequate framework for tame topology on interpretable sets and
interpretable groups in $\pCF$:
\begin{enumerate}
\item Every interpretable set is $L_G$-definable by elimination of
  imaginaries, and therefore admits the structure of a geometric
  definable space in a trivial way.
\item The topological tameness results of Section~\ref{ss-tame} hold
  for geometric definable spaces.  We can see this from admissibility,
  but there are probably direct proofs.
\item The methods of Section~\ref{sec:ex} or \cite{Pillay-G-in-p}
  presumably show that any interpretable group can be given the
  structure of a geometric definable space.
\end{enumerate}
Geometric definable spaces might provide a simpler alternative to the
admissible spaces of the present paper.  On the other hand, such an
approach is unlikely to generalize beyond $\pCF$.

\section{Further directions} \label{s-fd}
There are several directions for further research.
\subsection{Extensions to $P$-minimal and visceral theories}

Many of the results of this paper may generalize from $\pCF$ to other
$P$-minimal theories---expansions of $\pCF$ in which every unary
definable set is definable in the pure field sort \cite{p-min}.  The
topological tameness results of $\pCF$ like the cell decomposition and
dimension theory are known to generalize to $P$-minimal theories
\cite{p-min,p-minimal-cells}.

More generally, some of the results of this paper may generalize to
\emph{visceral theories} with the exchange property.  Visceral
theories were introduced by Dolich and Goodrick \cite{viscerality}.
Recall the notion of uniformities and uniform spaces from topology.  A
theory is \emph{visceral} if there is a definable uniformity on the
home sort $\Mm^1$ such that a unary definable set $D \subseteq \Mm^1$
is infinite iff it has non-empty interior.  The theory $\pCF$ is
visceral, as are $P$-minimal theories, o-minimal expansions of DOAG,
C-minimal expansions of ACVF, unstable dp-minimal theories of fields,
and many theories of valued fields.  Dolich and Goodrick prove a
number of topological tameness results for visceral theories,
analogous to those that hold in o-minimal and $P$-minimal
theories.\footnote{Some of Dolich and Goodrick's results are dependent
on the technical assumption ``definable finite choice'' (DFC).  In
forthcoming work, I will show that the assumption DFC can generally be
removed from all of Dolich and Goodrick's results, as one would expect
\cite{own-visceral}.}

It seems likely that the results of Sections~\ref{adm-sec-1} and
\ref{adm-sec-2} generalize to visceral theories with the exchange
property.  (There are some subtleties around the proof of
Lemma~\ref{step-3}, but these problems are not insurmountable.)  On
the other hand, local definable compacntess fails to hold in the
visceral setting, so Sections~\ref{def-com-sec}--\ref{0-dim} probably
do not generalize.

\subsection{Zero-dimensional \textit{dfg} groups}
An interpretable group $G$ is said to have \textit{dfg} (definable
$f$-generics) if there is a global definable type $p$ on $G$ with
boundedly many left translates.  In distal theories like $\Qq_p$, the
two properties \textit{fsg} and \textit{dfg} are polar opposites, in
some sense.  For example, if $G$ is an infinite interpretable group,
then $G$ can have a most one of the two properties \textit{fsg} and
\textit{dfg} (essentially by \cite[Proposition~2.27]{pierre-distal}).
If $G$ has dp-rank 1 and is definably amenable, then $G$ satisfies
exactly one of the two properties (essentially by
\cite[Theorem~2.8]{surprise} and \cite[Proposition~8.21]{NIPguide}).
By Proposition~\ref{eliminator}, the \textit{fsg} interpretable groups
over $\Qq_p$ are definable.  This vaguely suggests the following
Conjecture:
\begin{conjecture}
  If $G$ is a $\Qq_p$-interpretable 0-dimensional, definably amenable
  group, then $G$ has \textit{dfg}.
\end{conjecture}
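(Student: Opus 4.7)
The plan is to leverage the hypothesis $\dim(G) = 0$, so that by Remark~\ref{local-dim-remark} the admissible group topology on $G$ is discrete and the problem becomes essentially combinatorial. Via Section~\ref{geometric-ei} together with $\dim(G)=0$, one expects $G$ to be $\Qq_p$-definable in the geometric-sorts language $L_G$ by a formula that essentially avoids the home sort, so $G$ sits inside a finite product of geometric sorts $S_{m_1} \times \cdots \times S_{m_k}$. The \textit{fsg} case is already handled: by Proposition~\ref{p8.8} and Proposition~\ref{psf-prop}(\ref{psp1}), a 0-dimensional $\Qq_p$-interpretable group with \textit{fsg} must be finite, where \textit{dfg} holds trivially. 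The interesting case is the infinite, non-\textit{fsg} one, for which the value group $\Gamma$ is the prototype.

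The first step I would attempt is a structure theorem: every infinite 0-dimensional $\Qq_p$-interpretable group $G$ admits a definable normal series whose quotients are finite or definably isomorphic to definable subgroups of some $\Gamma^d$. This is plausible because the 0-dimensional part of $\pCF^{\eq}$ should be Morita-equivalent to Presburger arithmetic augmented by finite data coming from the residue field $\Ff_p$. Granting such a theorem, two tasks remain. First, definable subgroups of $\Gamma^d$ should have \textit{dfg}, witnessed by a ``lexicographically at $+\infty$'' definable type, the standard witness of \textit{dfg} in Presburger. Second, one must propagate \textit{dfg} up group extensions $1 \to H \to G \to Q \to 1$, using definable amenability of $G$ to produce a left-invariant Keisler measure $\mu$ whose marginal on $Q$ and whose conditionals on cosets of $H$ can be combined into a definable $f$-generic type on $G$.

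The main obstacle is the structure theorem: classifying 0-dimensional $\Qq_p$-interpretable groups in $L_G$ appears to require a finer analysis of the geometric sorts as stably embedded structures over $\Qq_p$ than the literature currently provides, and in particular a clean description of the induced theory on the 0-dimensional reduct of $\pCF^{\eq}$. A complementary approach would bypass structure theory entirely by working directly with a left-invariant Keisler measure $\mu$ provided by definable amenability, seeking a definable type in the support of $\mu$ whose left-translate orbit is bounded; here one would need a distality-based argument guaranteeing that on a 0-dimensional definably amenable group there is a generically stable or definable measure whose support contains such a type. Either route must invoke definable amenability at exactly the right step, since \textit{dfg}, unlike \textit{fsg}, is not preserved under arbitrary interpretable constructions in the absence of genuine amenability input.
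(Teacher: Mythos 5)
The statement you are trying to prove is stated in the paper as an open \emph{conjecture} in the ``Further directions'' section; the paper offers no proof, only the heuristic that ``zero-dimensional is the opposite of definable'' and the example of $\Gamma$. So there is no argument of the paper's to compare yours against, and what you have written is, by your own admission, a research program rather than a proof.

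The gaps you flag are genuine and are essentially the whole problem. First, the structure theorem you posit --- that every infinite $0$-dimensional $\Qq_p$-interpretable group has a normal series with quotients finite or embeddable in $\Gamma^d$ --- is not available, and your justification for it is too optimistic: the $0$-dimensional imaginaries of $\Qq_p$ include the geometric sorts $S_n = GL_n(K)/GL_n(\Oo)$ (vertices of the Bruhat--Tits building), and it is not known that the induced structure on these reduces to Presburger arithmetic plus finite data; classifying groups interpretable there is precisely the open question. Second, even granting the structure theorem, propagating \textit{dfg} up an extension $1 \to H \to G \to Q \to 1$ is not routine: a left-invariant Keisler measure supplied by definable amenability does not by itself yield a \emph{definable} type with boundedly many translates (indeed \textit{fsg} groups are definably amenable and, when infinite, fail \textit{dfg} in this setting, so amenability alone cannot suffice and must interact with $0$-dimensionality in a way you have not specified). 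Your ``complementary approach'' of locating a definable type in the support of the measure is a restatement of the conjecture, not a reduction of it. In short, the proposal correctly identifies the shape a proof might take, but both of its pillars are unestablished, so it does not constitute a proof of the (open) statement.
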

The intuition is that ``zero-dimensional'' is the opposite of
``definable,'' for infinite groups.  For example, the value group
$\Gamma$ has \textit{dfg}.

\subsection{The adjoint action}
Suppose $G$ is interpretable.  By Theorem~\ref{adm-group-thm}, the
unique admissible group topology on $G$ is locally Euclidean.  In
particular, $G$ is a manifold in a weak sense.  Because of generic
differentiability, we can probably endow $G$ with a $C^1$-manifold
structure, and then look at how $G$ acts on the tangent space at $1_G$
by conjugation.  That is, we can look at the adjoint action of $G$.

Say that a group is ``locally abelian'' if there is an open
neighborhood $U \ni 1_G$ on which the group operation is commutative.
If $G$ is locally abelian, then the adjoint action is trivial.  The
converse should hold by reducing to the case where $G$ is defined over
$\Qq_p$ and using properties of $p$-adic Lie groups.  If $G$ is
locally abelian, witnessed by $U \ni 1_G$, then the center of the
centralizer of $U$ is an abelian open subgroup.  Thus, locally abelian
groups have open abelian interpretable subgroups.

For a general interpretable group $G$, the adjoint action gives a
homomorphism $G \to GL_n(\Mm)$, where $n = \dim(G)$.  The kernel
should be a locally abelian group, and the image is definable.  Thus,
every interpretable group should be an extension of a definable group
by a locally abelian group.

This suggests the question: which groups are locally abelian?  Can we
classify them?  Zero-dimensional interpretable groups are locally
abelian, and so are abelian interpretable groups.  Are all locally
abelian interpretable groups built out of 0-dimensional groups and
abelian groups?  If $G$ is locally abelian, is there a \emph{normal}
abelian open subgroup?

% ODOT if I don't add the appendix, say something here
%% \item There are some stronger statements that can probably be proved
%%   about definably dominated topologies having large open subsets that
%%   are strongly admissible, but the proof is harder.  The point is to
%%   get improved versions of the statements in \S\ref{tameness-sec}.
%%   (Maybe do this is an appendix to this paper.)
%% It's just not worth it.

\begin{acknowledgment}
  The author was supported by the National Natural Science Foundation
  of China (Grant No.\@ 12101131).  The idea of writing this paper
  arose from discussions with Ningyuan Yao and Zhentao Zhang, who
  convinced me that interpretable groups in $\pCF$ could be
  meaningfully topologized.  Alf Onshuus provided some helpful
  references.  Anand Pillay asked what these results imply for groups
  interpretable in the value group, which inspired
  Section~\ref{0-dim}.
\end{acknowledgment}

\bibliographystyle{alpha} \bibliography{minibib}{}

\end{document}